\g@addto@macro\bfseries{\boldmath}
\newtheorem{theorem}{Theorem}[section]
\newtheorem{lemma}[theorem]{Lemma}
\newtheorem{proposition}[theorem]{Proposition}
\theoremstyle{definition}
\newtheorem{example}[theorem]{Example}
\newtheorem{remark}[theorem]{Remark}
\newtheorem{definition}[theorem]{Definition}
\newcommand{\G}{\ensuremath{\Gamma}}
\newcommand{\g}{\ensuremath{\gamma}}
\newcommand{\C}{\ensuremath{\mathbb{C}}}
\newcommand{\N}{\ensuremath{\mathbb{N}}}
\newcommand{\ov}{\ensuremath{\overline}}
\newcommand{\mf}{\ensuremath{\mathfrak}}
\newcommand{\mc}{\ensuremath{\mathcal}}
\newcommand{\R}{\ensuremath{\mathbb{R}}}
\newcommand{\inv}{\ensuremath{^{-1}}}
\newcommand{\norm}[1]{\left\|#1\right\|}
\renewcommand{\d}[1][t]{\ensuremath{\left.\frac{d}{d#1}\right|_{#1=0}}}
\renewcommand{\Re}{\ensuremath{\operatorname{Re}}}
\renewcommand{\Im}{\ensuremath{\operatorname{Im}}}
\DeclareMathOperator{\Ad}{Ad}
\DeclareMathOperator{\ad}{ad}
\DeclareMathOperator{\res}{Res}
\DeclareMathOperator{\HC}{HC}
\DeclareMathOperator{\pr}{pr}
\DeclareMathOperator{\WF}{WF}
\DeclareMathOperator{\conv}{conv}
\DeclareMathOperator{\spa}{span}
\DeclareMathOperator{\Diff}{Diff}
\DeclareMathOperator{\RT}{RT}
\newcommand*\bigcdot{\mathpalette\bigcdot@{.5}}
\newcommand*\bigcdot@[2]{\mathbin{\vcenter{\hbox{\scalebox{#2}{$\m@th#1\bullet$}}}}}
\author{Joachim Hilgert, Tobias Weich, and Lasse L. Wolf}
\title{Higher rank quantum-classical correspondence}
\email{hilgert@math.upb.de, weich@math.upb.de, llwolf@math.upb.de}
\begin{document}
 \begin{abstract}
  For a compact Riemannian locally symmetric space $\G\backslash G/K$ of arbitrary rank we determine the location of certain Ruelle-Taylor resonances for the Weyl chamber action. We provide a Weyl-lower bound on an appropriate counting function for the Ruelle-Taylor resonances and establish a spectral gap which is uniform in $\G$ if $G/K$ is irreducible of higher rank. This is achieved by proving a quantum-classical correspondence, i.e. a 1:1-correspondence between horocyclically invariant Ruelle-Taylor resonant states and joint eigenfunctions of the algebra of invariant differential operators on $G/K$.
 \end{abstract}

\maketitle

\section{Introduction}
Ruelle resonances for an Anosov flow provide a fundamental spectral invariant that does not only reflect many important dynamical properties of the flow but also geometric and topological properties of the underlying manifold. Very recently the concept of resonances was extended to higher rank $\R^n$-Anosov actions and led to the notion of \emph{Ruelle-Taylor\footnote{They were named Ruelle-Taylor resonances because the notion of the Taylor spectrum for commuting operators is a crucial ingredient of their definition.} resonances} which were shown to be a discrete subset $\sigma_{\RT}\subset \C^n$ \cite{higherrank}. It was furthermore shown in \cite{higherrank} that the leading resonances (i.e. those with vanishing real part) are related to mixing properties of the considered Anosov action. In particular, it was shown that if the action is weakly mixing in an arbitrary direction of the abelian group $\R^n$, then $0\in \C^n$ is the only leading resonance. Furthermore,  the resonant states at zero give rise to equilibrium measures that share properties of SRB measures of Anosov flows.

Apart from the leading resonances the spectrum of Ruelle-Taylor resonances has so far not been studied if $n\geq 2$. In particular, when $n\geq 2$, it was not known whether there are other resonances than the resonance at zero. Neither was it known whether there is a spectral gap, i.e. whether the real parts of the resonances are bounded away from zero. In this article we shed some light on these questions by examining the Ruelle-Taylor resonances for the class of Weyl chamber flows via harmonic analysis.

Let us briefly introduce the setting: Let $G$ be a real semisimple Lie group with finite center and Iwasawa decomposition $G=KAN$. Let $\mf a$ be the Lie algebra of $A$ and $M$ the centralizer of $A$ in $K$. Then $A$ is isomorphic to $\R ^n $ where $n$ is the real rank of $G$ and acts on $G/M$ from the right. Hence $A$ also acts on the compact manifold $\mathcal M := \G\backslash G/M$, where $\G\leq G$ is a cocompact torsion-free lattice. It can be easily seen that this action is an Anosov action with hyperbolic splitting $T\mathcal M = E_0\oplus E_s \oplus E_u$ which can be described explicitly in terms of associated vector bundles (see Section~\ref{sec:rt_res} for a general definition of Anosov actions and Proposition~\ref{prop:defAnosov} for the description of the hyperbolic splitting for Weyl chamber flows). 
Furthermore, if $\Sigma\subseteq \mf a^\ast$ is the set of restricted roots with simple system $\Pi$ and positive system $\Sigma^+$ then the positive Weyl chamber is given by $\mf a_+=\{H\in \mf a\mid \alpha(H)>0\,\forall\alpha\in\Pi\}$.

The \emph{Ruelle-Taylor resonances} of this Anosov action are defined as follows: For $H\in \mf a $ let $X_H$ be the vector field on $\mathcal M$ defined by the right $A$-action. Then 
\[
\sigma_{\RT}(X) \coloneqq \{\lambda \in \mf a^*_\C \mid \exists u\in \mc D'_{E_u^\ast}(\mathcal M)\setminus \{0\}\colon (X_H + \lambda(H))u=0\,\forall H\in\mf a\},
\]
where $\mc D'_{E_u^\ast}(M)$ is the set of distributions with wavefront set contained in the annihilator $E_u^\ast\subseteq T^\ast \mc M$ of $E_0\oplus E_u$. The distributions $ u\in \mc D'_{E_u^\ast}( M)$ satisfying $ (X_H + \lambda(H))u=0$ for all $H\in\mf a$ are called \emph{resonant states} of $\lambda$ and the dimension of the space of all such distributions is called the multiplicity $m(\lambda)$ of the resonance $\lambda$. It has been shown in \cite{higherrank} that $\sigma_{\RT}(X) \subset \mf a_\C^*$ is discrete and that all resonances have finite multiplicity. It also follows from that work that the real part of the resonances are located in a certain cone  $\ov{{}_- \mf a^\ast}\subset \mf a^*$ which is the negative dual cone of the positive Weyl chamber $\mf a_+$  (see Section~\ref{sec:notation} for a precise definition). 

In this article we will prove that there is a bijection between a certain subset of the Ruelle-Taylor resonant states and certain joint eigenfunctions of the invariant differential operators on the locally symmetric space $\G\backslash G/K$. Before explaining this correspondence in more detail we state two results on the spectrum of Ruelle-Taylor resonances that we can conclude from the correspondence.

The first result says that for any Weyl chamber flow there exist infinitely many Ruelle-Taylor resonances by providing a Weyl-lower bound on an appropriate counting function.
\begin{theorem}\label{thm:Weyl}
 Let $\rho$ be the half sum of the positive restricted roots, $W$ the Weyl group (see Section~\ref{sec:notation} for a precise definition), and for $t>0$ let
 \[
  N(t):=\sum_{\lambda \in \sigma_{\RT}, \Re(\lambda)=-\rho, \|\Im(\lambda)\|\leq t}m(\lambda).
 \]
 Then for $d:=\textup{dim}(G/K)$ 
\[
 N(t) \geq |W|\textup{Vol}(\G\backslash G/K) \left(2\sqrt{\pi}\right)^{-d}\frac{1}{\Gamma(d/2+1)}t^d+ \mathcal O(t^{d-1}).
\]
More generally, let $\Omega \subseteq \mf a^\ast$ be open and bounded such that $\partial \Omega$ has finite $(n-1)$-dimensional Hausdorff measure. Then 
\[\sum_{\lambda \in \sigma_{\RT}, \Re(\lambda)=-\rho, \Im(\lambda)\in t\Omega} m(\lambda) \geq  |W|\textup{Vol}(\G\backslash G/K) \left(2\pi \right)^{-d} \textup{Vol}(\Ad(K)\Omega) t^d +\mc O(t^{d-1}).
\]

\end{theorem}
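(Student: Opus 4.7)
The strategy is to deduce the theorem from the quantum--classical correspondence stated in the abstract, combined with a Weyl law on $\G\backslash G/K$. First I would use the correspondence to translate the counting of Ruelle--Taylor resonant states with $\Re(\lambda) = -\rho$ into a counting of joint eigenfunctions on $\G\backslash G/K$ for the commutative algebra of $G$-invariant differential operators on $G/K$. Via the Harish--Chandra isomorphism, joint eigenvalues are parametrized by Weyl orbits of spectral parameters $\nu \in \mf a_\C^\ast$, and the resonances with $\Re(\lambda)=-\rho$ should correspond precisely to \emph{real} parameters $\nu \in \mf a^\ast$ via a linear relation of the form $\lambda = -\rho + i w\nu$ for some $w \in W$. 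Generically each Weyl orbit $[\nu]$ thereby produces $|W|$ distinct resonance points $\lambda$, which is the source of the factor $|W|$.

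Next I would invoke a Weyl-type law for the joint spectrum on the compact locally symmetric space $\G\backslash G/K$. For any bounded open $\Omega\subseteq \mf a^\ast$ whose boundary has finite $(n-1)$-dimensional Hausdorff measure one expects
\[
 \sum_{[\nu]\colon \nu \in t\Omega} \dim V_\nu \;=\; \textup{Vol}(\G\backslash G/K)\,(2\pi)^{-d}\,\textup{Vol}(\Ad(K)\Omega)\, t^d + \mc O(t^{d-1}),
\]
where $V_\nu$ denotes the joint eigenspace with Harish--Chandra parameter $[\nu]$. The set $\Ad(K)\Omega \subseteq \mf p^\ast \cong \R^d$ enters because the $K$-momentum space of $G/K$ is the $\Ad(K)$-sweep of $\mf a^\ast$, and its Euclidean volume provides the correct Liouville content. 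Standard Fourier integral operator or heat-kernel methods in the spirit of Duistermaat--Kolk--Varadarajan deliver both the leading term and the remainder under the stated boundary hypothesis.

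Multiplying by $|W|$ to account for the multiplicity of the correspondence yields the second inequality of the theorem. The first statement then follows by specializing $\Omega$ to the unit ball in $\mf a^\ast$: since $\Ad(K)$ acts on $\mf p^\ast$ by isometries and every $\Ad(K)$-orbit meets $\mf a^\ast$, $\Ad(K)\Omega$ equals the unit ball in $\mf p^\ast$, with volume $\pi^{d/2}/\Gamma(d/2+1)$, and the identity $(2\pi)^{-d}\pi^{d/2} = (2\sqrt{\pi})^{-d}$ produces the explicit constant. The result is only a lower bound (and not a sharp asymptotic) because the correspondence captures the horocyclically invariant resonant states on the line $\Re(\lambda)=-\rho$, so any further resonant states at these points only enlarge $N(t)$.

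The main obstacle, I expect, is not the Weyl law itself---which is classical on compact locally symmetric spaces---but setting up the quantum--classical correspondence precisely enough to perform the required bookkeeping: in particular, lifting each joint eigenfunction on $\G\backslash G/K$ to a genuine Ruelle--Taylor resonant state on $\mc M = \G\backslash G/M$ with the correct wavefront set in $E_u^\ast$ and the prescribed resonance parameter, and ensuring that the map is injective so that Weyl-group multiplicities are preserved and the lower bound on $N(t)$ is genuine.
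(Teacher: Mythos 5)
Your proposal follows essentially the same route as the paper: identify first-band resonant states along $\Re\lambda=-\rho$ with joint eigenspaces ${}^\G E_{-\lambda-\rho}$ via Theorem~\ref{thm:qcc} (the condition $\lambda\notin\mc A$ is automatic here since $\lambda+\rho$ is purely imaginary), invoke the Duistermaat--Kolk--Varadarajan Weyl law, convert its $|W\lambda|^{-1}$-weighted orbit count to an unweighted count by excising the singular hyperplanes $\bigcup_{\alpha\in\Sigma^+}\alpha^\perp$ and noting that their contribution is $\mc O(t^{d-1})$, and use $m(\lambda)\geq\dim\textup{Res}^0_X(\lambda)$ for the lower bound. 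Your unit-ball specialization, including $\textup{Vol}(\Ad(K)B_1)=\pi^{d/2}/\Gamma(d/2+1)$ and the resulting constant $(2\sqrt\pi)^{-d}/\Gamma(d/2+1)$, matches the paper's derivation of the first display from the second.
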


The second result guarantees a uniform spectral gap.
\begin{theorem}\label{thm:gap}
Let $G$ be a real semisimple Lie group with finite center, then for any cocompact torsion-free discrete subgroup $\G\subset G$ there is a neighborhood $\mc G\subset\mf a^*$  of $0$ such that 
\[
 \sigma_{\RT} \cap (\mc G \times i\mf a^*) = \{0\}.
\]
If $G$ furthermore has Kazhdan's property (T) (e.g. if $G$ is simple of higher rank), then  the spectral gap $\mc G$ can be taken uniformly in $\Gamma$ and only depends on the group $G$.
\end{theorem}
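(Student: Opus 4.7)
The strategy is to deduce Theorem~\ref{thm:gap} from the quantum--classical correspondence announced in the introduction, which identifies horocyclically invariant Ruelle--Taylor resonant states at parameter $\lambda \in \mf a^\ast_\C$ bijectively with spherical joint eigenfunctions on $\G\backslash G/K$ of the algebra of invariant differential operators, with Harish--Chandra parameter $\lambda + \rho$. Under this correspondence, the constant function on $\G\backslash G/K$ corresponds to the resonance $\lambda = 0$.

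The core input is that, because $\G\backslash G/K$ is compact, the joint spectrum of the invariant differential operators is a discrete subset of $\mf a^\ast_\C / W$, in which the parameter $\rho$ (coming from constants) is isolated. Consequently, there is a neighborhood of $\rho$ in $\mf a^\ast_\C$ containing no other automorphic spherical parameter; translating back by $\rho$, one obtains a neighborhood $\mc G$ of $0$ in $\mf a^\ast$ such that the only horocyclically invariant resonance with $\Re\lambda \in \mc G$ is $\lambda = 0$.

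To upgrade this to the full statement, one must also exclude non-horocyclically-invariant resonances with real part close to $0$. For $\alpha \in \Sigma^+$ with root vector $X_\alpha$, the commutator $[X_H, X_\alpha] = \alpha(H) X_\alpha$ implies that $X_\alpha u$ is resonant at $\lambda - \alpha$ whenever $u$ is resonant at $\lambda$. Analyzing the resulting $\mf n$-module structure on the space of resonant states should show that any non-horocyclically-invariant state descends from a horocyclically invariant ``top'' whose parameter lies strictly further inside the closed cone $\ov{{}_- \mf a^\ast}$, shifted from $\lambda$ by a nonzero positive combination of simple roots. Since horocyclic tops in $\mc G$ correspond only to $\lambda = 0$, no such descendants can lie in a sufficiently small sub-neighborhood of $0$.

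The uniform version under Kazhdan's property (T) follows by replacing the $\G$-dependent discreteness argument by the statement that the trivial representation is isolated in $\widehat G$. Consequently, no sequence of non-trivial spherical unitary representations has Harish--Chandra parameters converging to $\rho$, so the resulting neighborhood of $\rho$ depends only on $G$, uniformly for all cocompact lattices $\G$. The hardest step is the ``descends from a top'' claim: rigorously establishing this within the wavefront-set/Taylor-spectrum formalism, and ruling out infinite descending chains of resonant states that never meet a horocyclic top, is the technical crux where the nilpotent $\mf n$-action meets the higher-rank Anosov microlocal structure.
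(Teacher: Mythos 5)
You have identified the right high-level strategy (pass through the quantum--classical correspondence and isolate $\rho$ in the quantum spectrum), which is also the paper's. However, two of your steps are either incomplete or go in the wrong direction, and these are precisely where the paper does its work.

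\emph{Tube gap at $\rho$.} Discreteness of $\sigma_Q$ in $\mf a^\ast_\C$ gives only a bounded punctured neighborhood of $\rho$, not a tube $\mc G\times i\mf a^\ast$; a priori there could be $\lambda_j\in\sigma_Q$ with $\Re\lambda_j\to\rho$ and $\|\Im\lambda_j\|\to\infty$. The paper's Theorem~\ref{thm:gapquantum} proves the tube gap by decomposing $G$ into simple factors and combining three further inputs: the constraint $w\lambda=-\ov\lambda$ from Proposition~\ref{prop:quantumimpliespositive}, which forces $\lambda_i$ \emph{real} in each rank-one factor as soon as $\Re\lambda_i\neq 0$; the bound $\Re\lambda\in\conv(W\rho)$; and Kazhdan's Property (T) for the higher-rank simple factors, via the $L^p$-estimate in Theorem~\ref{thm:locationquantum} and Proposition~\ref{prop:Lpequivalence}, to pin $\lambda_i=\rho_i$ exactly in those factors. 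Only after these reductions is the relevant part of $\sigma_Q$ trapped in a compact set, so that discreteness can finish. In particular Property (T) enters already in the $\G$-dependent half of the theorem, not only in the uniform version as your sketch suggests.

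\emph{First-band reduction.} You differentiate in the \emph{stable} root directions $X_\alpha$, $\alpha\in\Sigma^+$, shifting $\lambda\mapsto\lambda-\alpha$, and then contemplate chains toward a horocyclic "top." This has no direct bearing on horocyclic invariance, which is invariance under the \emph{unstable} ($\mf n_-$) derivatives, and it invites genuine difficulties about termination that do not arise in the paper. The correct operators are the horocycle operators $\mc U_{-\alpha}$ of Proposition~\ref{prop:horocycle}, which send $\res(\lambda)$ into $\res(\lambda+\alpha)$, i.e.\ \emph{raise} the parameter toward $0$. Since every Ruelle--Taylor resonance has $\Re\lambda\in\ov{{}_-\mf a^\ast}$ (Proposition~\ref{prop:locationgeneral}), once $\Re\lambda+\alpha\notin\ov{{}_-\mf a^\ast}$ for all simple $\alpha$ one gets $\mc U_{-\alpha}u=0$ for free, hence $u$ is already first-band. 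This is a one-line argument with no chains or module structure, and it contradicts your description ``strictly further inside the closed cone'': the relevant shift $+\alpha$ moves toward the boundary of $\ov{{}_-\mf a^\ast}$, not deeper into it. Finally, you should also check that the exceptional set $\mc A$ of Theorem~\ref{thm:qcc} avoids a neighborhood of $0$ (true because $\langle\rho,\alpha\rangle>0$ for all $\alpha\in\Sigma^+$) before invoking the bijection there.
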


Let us now explain in some detail the spectral correspondence that is the key to the above results: 

We define the space of \emph{first band resonant states} as those resonant states that are in addition horocyclically invariant 
\[
 \textup{Res}^0_X(\lambda):= \{u\in \mathcal D'_{E^*_u}(\mc M), (X_H+\lambda(H))u=0 \text{ and } \mathcal X u= 0 \text{ for all } H\in\mf a\text{ and }\mc X\in C^\infty(\mc M, E_u)\}
\]
and we call a Ruelle-Taylor resonance a \emph{first band resonance} iff $\textup{Res}^0_X(\lambda)\neq 0$. By working with horocycle operators and vector valued Ruelle-Taylor resonances we will be able to show that all resonances with real part in a certain neighborhood of zero are always first band resonances (see Proposition~\ref{prop:horocycle}). As the Weyl chamber flow is generated by mutually commuting Hamilton flows, we consider the set of Ruelle-Taylor resonances as a \emph{classical spectrum}.

Let us briefly describe the \emph{quantum} side: In the rank one case the quantization of the geodesic flow is given by the Laplacian on $G/K$. In the higher rank case we have to consider the algebra of $G$-invariant differential operators on $G/K$ which we denote by $\mathbb D(G/K)$. As an abstract algebra this is a polynomial algebra with $n$ algebraically independent operators, among them the Laplace operator. These operators descend to $\G\backslash G/K$ and we can define the joint eigenspace 
\[
{}^\G E_\lambda=\{f\in C^\infty(\G\backslash G/K)\mid Df =\chi_\lambda(D)f \quad\forall D\in\mathbb D(G/K)\}
\]
where $\chi_\lambda$ is a character of $\mathbb D(G/K)$ parametrized by $\lambda\in\mf a_\C ^\ast/W$ with the Weyl group $W$. Here $\chi_\rho$ is the trivial character (see Section~\ref{sec:harishchandra}). Let $\sigma_Q$ denote the corresponding \emph{quantum spectrum} $\{\lambda\in\mf a_\C^\ast\mid {}^\G E_\lambda\neq \{0\}\}$.

We have the following correspondence between the classical first band resonant states and the joint quantum eigenspace:
\begin{theorem}\label{thm:qcc}
 Let $\lambda\in \mf a_\C^\ast$  be outside the exceptional set $\mc A\coloneqq \{\lambda\in \mf a_\C^\ast\mid \frac {2\langle \lambda+\rho,\alpha\rangle}{\langle\alpha,\alpha\rangle} \in -\N_{>0}$ for some $\alpha\in \Sigma^+\}$. Then there is a bijection between the finite dimensional vector spaces
 \[
  \pi_\ast : \textup{Res}^0_X(\lambda) \to {}^\G E_{-\lambda-\rho}
 \]
where $\pi_\ast$ is the push-forward of distributions along the canonical projection $\pi:\G\backslash G/M\to \G\backslash G/K$.
\end{theorem}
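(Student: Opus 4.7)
The plan is to factor $\pi_\ast$ as follows: lift $u$ to $\Gamma\backslash G$, extract a $\Gamma$-equivariant boundary distribution on $K/M$, and recognize the resulting fibre integral as a Poisson transform. Bijectivity then reduces to the distributional refinement of Helgason's conjecture (Kashiwara--Kowata--Minemura--Okamoto--Oshima--Tanaka).

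\emph{Lift and boundary datum.} Let $p\colon\Gamma\backslash G\to\Gamma\backslash G/M$ be the quotient map and set $\tilde u\coloneqq p^\ast u$. By the defining properties of $\textup{Res}^0_X(\lambda)$, together with the horocycle analysis of Proposition~\ref{prop:horocycle}, the distribution $\tilde u$ is right-invariant under $M$ and under $N$, and satisfies $R_H\tilde u=-\lambda(H)\tilde u$ for all $H\in\mf a$, where $R_H$ denotes right differentiation. Writing $g=\kappa(g)\exp(H(g))n(g)$ in Iwasawa coordinates, right-$MAN$-equivariance forces
\[
 \tilde u(g)=e^{-\lambda(H(g))}\,T(\kappa(g)M)
\]
for a uniquely determined $\Gamma$-equivariant distribution $T$ on $K/M$. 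Since $\pi$ is a smooth fibration with compact fibre $K/M$, $\pi_\ast u$ is well-defined, and substituting the above yields
\[
 (\pi_\ast u)(gK)=\int_K T(\kappa(gk)M)\,e^{-\lambda(H(gk))}\,dk,
\]
which is the Poisson transform $P_{-\lambda-\rho}T$ in the conventions of Section~\ref{sec:harishchandra}.

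\emph{Eigenvalue and factorization of $\pi_\ast$.} To identify the $\mathbb D(G/K)$-character, I lift $D\in\mathbb D(G/K)$ to a left-invariant operator $\tilde D$ on $G$ and reduce $\tilde D$ modulo $\mf n U(\mf g)+U(\mf g)\mf k$; this is precisely the Harish--Chandra projection, and applying it to $\tilde u$ (right-$MN$-invariant and $A$-eigen with parameter $-\lambda$) produces the scalar $\chi_{-\lambda-\rho}(D)$, with the $\rho$-shift supplied by the modular function of $MAN$. Pushing forward gives $\pi_\ast u\in{}^\Gamma E_{-\lambda-\rho}$, so $\pi_\ast$ factors as
\[
 \textup{Res}^0_X(\lambda)\ \xrightarrow[\cong]{u\mapsto T}\ \bigl\{\Gamma\text{-equivariant }T\in\mathcal D'(K/M)\bigr\}\ \xrightarrow{P_{-\lambda-\rho}}\ {}^\Gamma E_{-\lambda-\rho}.
\]
The first arrow is a bijection by the construction above. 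The second is a bijection for all spectral parameters outside the zero set of the denominator of the Harish--Chandra $c$-function. This zero set is precisely $\mathcal A$: the condition $2\langle\lambda+\rho,\alpha\rangle/\langle\alpha,\alpha\rangle\in-\N_{>0}$ for some $\alpha\in\Sigma^+$ characterizes exactly the parameters at which the Gindikin--Karpelevich factors degenerate.

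\emph{Expected main obstacle.} The subtle part is reconciling two very different notions of regularity. The inverse Poisson transform produces a boundary distribution $T$ whose analytic quality comes from elliptic regularity on $G/K$, whereas the Ruelle--Taylor framework demands that the associated $\tilde u$ define a distribution on $\Gamma\backslash G/M$ whose wavefront set is contained in $E_u^\ast$. Verifying that the formula $\tilde u(g)=e^{-\lambda(H(g))}T(\kappa(g)M)$ automatically respects this dynamical wavefront condition --- so that surjectivity of $\pi_\ast$ genuinely lands inside $\textup{Res}^0_X(\lambda)$ --- is where the core technical work lies, and where the hypothesis $\lambda\notin\mathcal A$ becomes indispensable, both for invertibility of the Poisson transform and for control of the boundary asymptotics of $\tilde u$.
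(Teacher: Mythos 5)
Your overall architecture --- lift $u$ to a boundary distribution on $K/M$ and recognize the fibre integral as a Poisson transform --- is the same strategy the paper uses (Lemmas~\ref{la:gammainv}--\ref{la:identificationresonancezero} followed by Propositions~\ref{prop:firstbandprinicipal} and~\ref{prop:qcc}). However, there are two concrete problems.

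First, a sign slip with real consequences. First band resonant states are those annihilated by $\mc U_{-\alpha}$ for $\alpha\in\Sigma^+$, i.e.\ by derivatives in $\mf n_-$-directions (recall $E_u = \G\backslash(G\times_{\Ad|_M}\mf n_-)$, not $\mf n$). So the lift $\tilde u$ is annihilated by right $\mf n_-$-derivatives, not right $\mf n$-derivatives, and the boundary value must be extracted from the \emph{opposite} Iwasawa decomposition $G=KAN_-$. The paper works with the positive system $\Sigma^-$ in Lemmas~\ref{la:relationdifferentresonances}--\ref{la:identificationresonancezero} and then applies the explicit intertwiner $If(k)=f(kw_0)$ to pass to the standard principal series picture; the resulting $w_0$-twist is exactly why the Poisson transform used is $\mc P_{-w_0(\lambda+\rho)}$ and why the target comes out as $E_{-w_0(\lambda+\rho)}=E_{-(\lambda+\rho)}$. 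With your conventions as written (standard $KAN$, no $w_0$), the identity $(\pi_\ast u)(gK)=P_{-\lambda-\rho}T(gK)$ does not literally hold for $T=\tilde u|_{K/M}$.

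Second, and more seriously, you assert that the first arrow $u\mapsto T$ is a bijection ``by the construction above,'' but the surjectivity direction --- that every $\G$-equivariant $T\in\mc D'(K/M)$ lifts to a distribution on $\G\backslash G/M$ with $\WF\subseteq E_u^\ast$ --- is precisely what you then concede is the ``expected main obstacle,'' leaving it unresolved. The paper closes this gap in Lemma~\ref{la:gammainv} (citing \cite[Lemma~2.5]{KW17}) by a clean characteristic-set argument: the operators $\mathbf X_H^\tau+\lambda(H)$, $H\in\mf a$, together with $\nabla_{\mf X_-}$, $\mf X_-$ a section of $G\times_{\Ad|_M}\mf n_-$, have joint characteristic variety exactly $E_u^\ast$, so any distributional solution of the system automatically has wavefront set in $E_u^\ast$. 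In particular the hypothesis $\lambda\notin\mc A$ plays no role whatsoever in this step; it is needed only for the Poisson transform bijectivity (your second arrow). Attributing $\mc A$ to ``control of the boundary asymptotics of $\tilde u$'' misplaces where the hypothesis actually enters. One smaller point: for the second arrow you need not only the distributional refinement of KKMOOT but also the fact that $\G$-invariant eigenfunctions on a cocompact quotient have moderate growth so that the inverse Poisson transform lands in $(\,\cdot\,)^{-\infty}$ rather than merely in hyperfunction vectors (the paper cites \cite[Remark~12.5]{vdBanSchl87} for this).
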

Using this 1:1-correspondence we can then use results about the the quantum spectrum to obtain obstructions and existence results on the Ruelle-Taylor resonances. Notably we use results of Duistermaat-Kolk-Varadarajan \cite{dkv} on the spectrum $\sigma_Q$ but we also deduce refined information on the quantum spectrum. Here we use  $L^p$-bounds for spherical functions obtained from asymptotic expansions \cite{vdBanSchl87}  and $L^p$-bounds for matrix coefficients based on work by Cowling and Oh \cite{cowling, oh2002}.
Theorem~\ref{thm:Weyl} and Theorem~\ref{thm:gap} as stated above give only a rough version of the information on the Ruelle-Taylor resonances that we can actually obtain. As the full results require some further notation we refrain from stating them in the introduction and refer to Theorem~\ref{thm:main}. We also refer to Figure~\ref{fig:first_band_sl3} for a visualization of the structure of first band resonances for the case of $G=SL(3,\R)$.  
 
\textbf{Methods and related results:} 

The key ingredient to the quantum-classical correspondence is that we can in a first step relate the horocyclically invariant first band resonant states with distributional vectors in some principal series representations. Then we can apply the Poisson transform of \cite{KKMOOT} to get a bijection onto the quantum eigenspace ${}^\G E_{-\lambda-\rho}$. The prototype of such a quantum-classical correspondence has been first established by Dyatlov, Faure and Guillarmou \cite{DFG15} in the case of manifolds of constant curvature or in other words for the rank one group $G=SO(n,1)$. Certain central ideas have however already been present for $G=SO(2,1)$ in the works of Flaminio-Forni and Cosentino \cite{FF03, Cos05}. In the rank one setting there exist several generalizations of the quantum classical correspondence of \cite{DFG15} e.g. to convex cocompact manifolds of constant curvature \cite{GHW18, Had20}, general compact locally symmetric spaces of rank one \cite{GHW21} and vector bundles \cite{KW20, KW17}.

Besides the correspondence between the classical Ruelle resonant states and the quantum Laplace eigenvalues there are several other approaches in the literature establishing  exact relations between the Laplace spectrum and the geodesic flow. One approach is to relate the Laplace spectrum to divisors of zeta functions. Such relations have been obtained for rank one locally symmetric spaces on various levels of  generality by Bunke, Olbrich, Patterson and Perry ($G=SO(n,1)$, $\G$ convex cocompact: \cite{BO96, BO99, PP01}, $G$ real rank one, $\G$ cocompact \cite{BO95}).

A third approach to an exact quantum-classical correspondence is to relate the Laplace spectrum to a transfer operator which represents a time discretized dynamics of the geodesic flow. This type of correspondence was notably studied for hyperbolic surfaces with cusps (see \cite{LZ01, BLZ15, BP19} for results for $G=SL(2,\R)$ and $\G$ discrete subgroups of increasing generality). We refer in particular to the expository article \cite{PZ19} and the introduction of \cite{BP19} for a current state of the art of these techniques. A very first step towards generalizations of this approach to higher rank has been recently achieved in \cite{Poh20} for the Weyl chamber flow on products of Schottky surfaces by the construction of symbolic dynamics and transfer operators. 

Note that in \cite{DFG15} not only the first band of Ruelle resonances was related to the Laplace spectrum but a complete band structure has been established and the higher bands could be related to the Laplace spectrum on divergence free symmetric tensors. In the present article we do not study the higher bands. This will presumably be a very hard question for general semisimple groups $G$ (note that in \cite{DFG15} it was crucial at several points that for $G=SO(n,1)$, $N\cong \R^{n-1}$ is abelian). However it might  be tractable for some concrete groups with simple enough root spaces such as $G=SL(3,\R)$. For geodesic flows the phenomenon of such a band structure is quite universal and known in the case of compact locally symmetric spaces of rank one \cite{KW17} but also for geodesic flows on manifolds of pinched negative curvature \cite{FT13,GC20,FT21}.

As mentioned above an important application of Ruelle resonances for Anosov flows are mixing results. More precisely, the existence of a spectral gap in addition with resolvent estimates imply mixing of the flow. For Weyl chamber flows this relation of gaps and mixing rates is not yet established but conjectured to be true. From this perspective Theorem~\ref{thm:gap} is related to the work of
Katok and Spatzier \cite{KS94} who showed exponential mixing for the Weyl chamber action in every direction of the closure of the positive Weyl chamber if $G$ has Property (T). However it is not known whether their result remains true if the Property (T) assumption is dropped. Our result above (Theorem~\ref{thm:gap}) ensures a $\G$-dependent gap in any case but  as mentioned above the precise relation to mixing rates is not yet established.

Finally, Weyl laws for Ruelle resonances of geodesic flows can also be established in variable curvature (or more generally contact Anosov flows)  in various settings \cite{FS11, DDZ14, FT17}. In particular, in the very recent article \cite{FT21} by Faure and Tsujii the Weyl law also follows because a ``first band'' of resonances can be related to a quantum operator. The methods in their work are however completely different and are based on microlocal analysis rather then global harmonic analysis.

{\bf Acknowledgements.} We thank Jan Frahm for discussions about the spherical dual and Erik van den Ban for explanations regarding expansions of spherical functions and Benjamin K\"uster for valuable feedback on the manuscript.
This project has received funding from Deutsche Forschungsgemeinschaft (DFG) (Grant No. WE 6173/1-1  Emmy Noether group ``Microlocal Methods for Hyperbolic Dynamics'').

\section{Preliminaries}

\subsection{Ruelle-Taylor  resonances for higher rank Anosov actions}\label{sec:rt_res}
In this section we recall the main properties of Ruelle-Taylor resonances for higher rank Anosov actions from \cite{higherrank}.
Let $\mc M$ be a compact Riemannian manifold, $A\simeq \R^n$ be an abelian group and let $\tau\colon A\to \operatorname{Diffeo}(\mc M)$ be a smooth locally free group action. If $\mf a\coloneqq \operatorname{Lie}(A)$ we define the generating map $$X\colon\mf a\to C^\infty (\mc M,T\mc M),\quad H\mapsto X_H\coloneqq \d \tau(\exp(tH)).$$ Note that $[X_{H_1},X_{H_2}]=0$ for $H_i\in\mf a$. For $H\in \mf a$ we denote by $\varphi_t^{X_H}$ the flow of the vector field $X_H$. The action is called \emph{Anosov} if there exists $H\in \mf a$ and a continuous $\varphi_t^{X_H}$-invariant splitting $$T\mc M = E_0\oplus E_u\oplus E_s,$$ where $E_0\coloneqq \spa\{X_H\colon H\in\mf a\}$ and there exist $C>0$, $\nu>0$ such that for each $x\in \mc M$
\begin{align*}
 \forall w\in E_s(x), t\geq 0:&\quad \|d\varphi_t^{X_H}(x)w\|\leq Ce^{-\nu t}\|w\|,\\
 \forall w\in E_u(x), t\leq 0:&\quad \|d\varphi_t^{X_H}(x)w\|\leq Ce^{-\nu |t|}\|w\|,
\end{align*}
where the norm on $T\mc M$  is given by the Riemannian metric on $\mc M$.
Such an $H\in \mf a$ is called \emph{transversally hyperbolic}. We call the set $$\mc W \coloneqq \{H'\in \mf a\mid H' \text{ is transversally hyperbolic with the same splitting as } H\}$$ the \emph{positive Weyl chamber containing $H$}. 

Let $E\to \mc M$ be the complexification of a smooth Riemannian vector bundle over $\mc M$ and denote by $\Diff^1(\mc M, E)$ the Lie algebra of first order differential operators with smooth coefficients acting on sections of $E$. Then a Lie algebra homomorphism $\mathbf X\colon \mf a\to \Diff^1(\mc M,E)$ is called an \emph{admissible lift} of the generic map $X$ if 
\begin{equation}\label{eq:admissiblelift}
 \mathbf X_H(fs)=(X_H f)s +f\mathbf X_Hs
\end{equation}
 for $s\in C^\infty(\mc M,E)$, $f\in C^\infty (\mc M)$, and $h\in \mf a$.

For a fixed positive Weyl chamber $\mc W$ the set of \emph{Ruelle-Taylor resonances} can be defined as 
\begin{equation*}
 \sigma_{\RT}(\mathbf X) \coloneqq \{\lambda \in \mf a_\C^\ast \mid \exists u\in \mc D'_{E_u^\ast}(\mc M,E)\setminus \{0\}\colon (\mathbf X_H + \lambda(H))u=0\,\forall H\in\mf a\},
\end{equation*}
where $\mc D'_{E_u^\ast}(\mc M,E)$ is the set of distributional sections of the bundle $E$ with wavefront set contained in $E_u^\ast$. Here $E_u^\ast$ is defined as the annihilator of  $E_0\oplus E_u$ in $T^\ast \mc M$.
The vector space of \emph{Ruelle-Taylor resonant states} for a resonance $\lambda\in \sigma_{\RT}(\mathbf X)$ is defined by 
$$\res_{\mathbf X}(\lambda) \coloneqq \{u\in \mc D'_{E_u^\ast}(\mc M,E)\mid (\mathbf X_H + \lambda(H))u=0\,\forall H\in\mf a\}.$$

\begin{remark}
 The original definition of Ruelle-Taylor resonances and resonant states is stated via Koszul complexes (see \cite[Section~3]{higherrank}). More precisely, $\lambda$ is a resonance iff the corresponding Koszul complex is not exact and the resonant states are the cohomologies of this complex. The space of resonant states that we are considering is just the 0th cohomology. However, it turns out that the Koszul complex is not exact iff the 0th cohomology is non-vanishing, i.e. the two notions coincide (see \cite[Theorem~4]{higherrank}).
\end{remark}

It is known that the resonances have the following properties.
\begin{proposition}[{see \cite[Theorems~1 and 4]{higherrank}}]\label{prop:generalAnosov}
  $\sigma_{\RT}(\mathbf X)$ is a discrete subset of $\mf a_\C^\ast$ contained in $$\{\lambda\in \mf a_\C^\ast\mid \Re (\lambda(H))\leq C_{L^2}(H) \quad \forall H\in \mc W\}$$ with $C_{L^2}(H)= \inf \{C>0 \mid \|e^{-t\mathbf X_H}\|_{L^2\to L^2} \leq e^{Ct} \quad \forall t>0\}$. Moreover, for each $\lambda\in \sigma_{\RT}(\mathbf X)$ the space $\res_{\mathbf X}(\lambda)$ of resonant states is finite dimensional.
\end{proposition}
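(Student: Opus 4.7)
The plan is to establish discreteness via a microlocal/anisotropic Sobolev space framework adapted to the Anosov splitting, following the Faure--Sjöstrand and Dyatlov--Zworski paradigm extended to commuting families. First, for a fixed transversally hyperbolic $H_0\in\mc W$, I would construct an escape function $G\in C^\infty(T^\ast\mc M)$ whose derivative along the Hamilton flow of the principal symbol of $X_{H_0}$ is strictly negative on a conic neighborhood of $E_u^\ast$ and strictly positive on a conic neighborhood of $E_s^\ast$. A pseudodifferential quantization of $e^{sG}$ produces a family of anisotropic Sobolev spaces $\mc H^{sG}(\mc M,E)$ on which the generator $\mathbf X_{H_0}$ has meromorphic resolvent $(\mathbf X_{H_0}+z)^{-1}$ with poles of finite rank in a half-plane $\{\Re z\geq -Cs\}$; the corresponding eigendistributions have wavefront set contained in $E_u^\ast$ and are independent of the choice of $s$.

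To promote this to the higher-rank setting, I would exploit the fact that all $\mathbf X_H$ with $H\in\mf a$ commute with $\mathbf X_{H_0}$ and with each other, so they preserve the (generalized) eigenspaces of $\mathbf X_{H_0}$ on $\mc H^{sG}$. One defines the joint Taylor spectrum of the commuting tuple $(\mathbf X_H-\lambda(H))_{H\in\mf a}$ via non-exactness of the associated Koszul complex, and the condition $\lambda\in\sigma_{\RT}(\mathbf X)$ amounts to non-triviality of the zeroth Koszul cohomology. The Fredholm property of $(\mathbf X_{H_0}+z)^{-1}$ on $\mc H^{sG}$ together with the fact that joint resonant states live inside the finite-dimensional generalized eigenspaces of $\mathbf X_{H_0}$ simultaneously yields discreteness of $\sigma_{\RT}(\mathbf X)$ and finite-dimensionality of $\res_{\mathbf X}(\lambda)$. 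For the containment in $\{\Re\lambda(H)\leq C_{L^2}(H)\}$, I would pair a nonzero resonant state $u$ against a smooth test section $\varphi$ to produce the matrix coefficient $t\mapsto \langle e^{-t\mathbf X_H}\varphi,u\rangle = e^{-\lambda(H)t}\langle\varphi,u\rangle$; the hypothesis $\Re\lambda(H)>C_{L^2}(H)$ would force this quantity to grow faster than the $L^2$-operator norm of $e^{-t\mathbf X_H}$ permits, contradicting boundedness after a duality argument. Non-triviality of the pairing for some $\varphi$ is ensured by the wavefront condition $\WF(u)\subset E_u^\ast$, which is disjoint from the zero section and compatible with smooth testing along the flow direction.

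The main obstacle is arranging a single anisotropic space that simultaneously accommodates every $\mathbf X_H$, $H\in\mf a$, rather than just one privileged direction. This is resolved by observing that all elements of the positive Weyl chamber $\mc W$ share the same hyperbolic splitting $T\mc M=E_0\oplus E_u\oplus E_s$, so an escape function built from this common splitting works uniformly for all $H\in\mc W$, and by linearity extends to all of $\mf a$. A secondary technical point is the admissibility condition \eqref{eq:admissiblelift}, which is needed to ensure that $\mathbf X_H$ is a genuine first-order differential operator whose principal symbol coincides with that of $X_H\otimes\id_E$, so that the scalar microlocal analysis applies verbatim to the vector-valued setting. Once the uniform anisotropic space and escape function are fixed, the Koszul cohomology argument and the $L^2$ comparison reduce to the one-dimensional Faure--Sjöstrand theory applied direction by direction.
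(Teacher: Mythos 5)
The statement is a \emph{cited result}: the paper attributes it to Theorems~1 and~4 of \cite{higherrank} and offers no proof of its own, so there is no in-paper argument to compare against. Your reconstruction is directionally consistent with the framework that \cite{higherrank} is known to use (anisotropic Sobolev spaces built from escape functions adapted to the common Anosov splitting over the whole Weyl chamber, and the Koszul-complex/Taylor-spectrum formalism — indeed the paper's own Remark after Proposition~\ref{prop:generalAnosov} confirms that Theorem~4 of \cite{higherrank} is precisely the non-obvious equivalence between non-exactness of the Koszul complex and non-vanishing of its zeroth cohomology, a step you invoke but do not establish). Your reduction of the higher-rank discreteness to the one-operator Fredholm theory via the standard fact that the Taylor spectrum of a commuting tuple is killed once a single entry is invertible is a legitimate shortcut.

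There is, however, a genuine gap in your argument for the containment $\Re\lambda(H)\le C_{L^2}(H)$. You propose to pair the resonant state $u$ against a smooth test section $\varphi$ and bound $\langle e^{-t\mathbf X_H}\varphi,u\rangle$ by $\|e^{-t\mathbf X_H}\|_{L^2\to L^2}$, but $u$ is a distribution with wavefront set in $E_u^\ast$, not an $L^2$ section, so it does not pair continuously against $L^2$; the quantity $|u(e^{-t\mathbf X_H}\varphi)|$ is simply not controlled by $\|e^{-t\mathbf X_H}\varphi\|_{L^2}$. The correct route in the Faure--Sj\"ostrand framework is different: one observes that the strongly continuous semigroup $e^{-t\mathbf X_H}$ acts both on $L^2$ (unitarily up to $e^{C_{L^2}(H)t}$) and on the anisotropic space $\mathcal H^{sG}$, that its generator and resolvent on the two spaces agree on a common core, and that the Laplace-transform formula for the resolvent $(\mathbf X_H+z)^{-1}=\int_0^\infty e^{-zt}e^{-t\mathbf X_H}\,dt$ converges on $\mathcal H^{sG}$ and exhibits it as a bounded operator for $\Re z>C_{L^2}(H)$; this rules out poles in that half-plane without ever pairing a rough distribution against $L^2$. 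A second, more minor, soft spot is the assertion that the escape function built for one transversally hyperbolic $H_0$ works \emph{uniformly} for all $H\in\mathcal W$: the splitting is shared but the contraction/expansion rates are direction dependent, and \cite{higherrank} has to arrange the Fredholm estimates to hold on a fixed conic neighborhood of $\mathcal W$ simultaneously; it is not merely ``linearity from $H_0$''.
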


\subsection{Semisimple Lie groups}\label{sec:notation}
In this section we fix the notation for the present article. Let $G$ be a real semisimple non-compact Lie group  with Iwasawa decomposition $G=KAN$. Furthermore, let $M\coloneqq Z_K(A)$ be the centralizer of $A$ in $K$ and $G=KAN_-$ the opposite Iwasawa decomposition. We denote by $\mf g, \mf a, \mf n,\mf n_-,\mf k,\mf m$ the corresponding Lie algebras. For $g\in G$ let $H(g)$ be the logarithm of the $A$-component in the Iwasawa decomposition.   We have an invariant inner product on $\mf g$ that is induced by the Killing form and the Cartan involution. We have the orthogonal Bruhat decomposition $\mf g =\mf a \oplus \mf m \oplus \bigoplus _{\alpha\in\Sigma} \mf g_\alpha$ into root spaces $\mf g_\alpha$ with respect to the $\mf a$-action via the adjoint action $\ad$. Here $\Sigma\subseteq \mf a^\ast$ is the set of restricted roots. Denote by $W$ the Weyl group of the root system of restricted roots. Let $n$ be the real rank of $G$ and $\Pi=\{\alpha_1,\ldots,\alpha_n\}$ (resp. $\Sigma^+$) the simple (resp. positive) system in $\Sigma$ determined by the choice of the Iwasawa decomposition.  Let $m_\alpha \coloneqq \dim_\R \mf g_\alpha$ and $\rho \coloneqq \frac 12 \Sigma_{\alpha\in \Sigma^+} m_\alpha \alpha$. Denote by $w_0$ the longest Weyl group element, i.e. the unique element in $W$ mapping $\Pi$ to $-\Pi$. Let $\mf a_+ \coloneqq \{H\in \mf a\mid \alpha(H)>0 \,\forall \alpha\in\Pi\}$ the positive Weyl chamber and $\mf a^\ast_+$ the corresponding cone in $\mf a^\ast$ via the identification $\mf a \leftrightarrow \mf a^\ast$ through the Killing form $\langle\cdot,\cdot\rangle$ restricted to $\mf a$. We denote by ${}_+\mf a^\ast$ the dual cone $\{\lambda \in \mf a ^\ast\mid \lambda (H)> 0 \,\forall H\in \ov{\mf a_+}\setminus\{0\}\}$ and by  $\ov{{}_+\mf a^\ast}$ its closure $\{\lambda \in \mf a ^\ast\mid \lambda (H)\geq 0 \,\forall H\in \mf a_+\}=\R_{\geq 0} \Pi$. Hence, if $\omega_j$ is the dual basis of $\alpha_j$ then $\ov{{}_+\mf a^\ast}=\{\lambda\in \mf a^\ast\mid \langle \lambda,\omega_j\rangle\geq 0 \, \forall j=1,\ldots,n\}$. Furthermore, we denote $\ov{ {}_-\mf a^\ast}\coloneqq -\ov{{}_+\mf a ^\ast}$. If $\ov {A^+} \coloneqq \exp (\ov {\mf a_+})$, then we have the Cartan decomposition $G=K\ov {A ^+}K$. 
\begin{example}
 If $G=SL_n(\R)$, then we choose $K=SO(n)$, $A$ as the set of diagonal matrices of positive entries with determinant 1, and $N$ as the set of upper triangular matrices with 1's on the diagonal. $\mf a$ is the abelian Lie algebra of diagonal matrices and the set of restricted roots is $\Sigma= \{\varepsilon_i-\varepsilon_j\mid i\neq j\}$ where $\varepsilon_i(\lambda)$ is the $i$-th diagonal entry of $\lambda$. The positive system corresponding to the Iwasawa decomposition is  $\Sigma^+=\{\varepsilon_i-\varepsilon_j\mid  i<j\}$ with simple system $\Pi=\{\alpha_i = \varepsilon_i-\varepsilon_{i+1}\}$. The positive Weyl chamber is $\mf a_+=\{\operatorname{diag}(\lambda_1,\ldots,\lambda_n)\mid \lambda_1>\cdots>\lambda_n\}$ and the dual cone is $\ov{{}_+ \mf a}=\{\operatorname{diag}(\lambda_1,\ldots,\lambda_n)\in\mf a\mid \lambda_1+\cdots+\lambda_k\geq0\,\forall k\}$. The Weyl group is the symmetric group $S_n$ acting by permutation of the diagonal entries.
\end{example}
\begin{figure}
 \centering
 \includegraphics[height=8cm,trim = 50pt 100pt 50pt 50pt,clip]{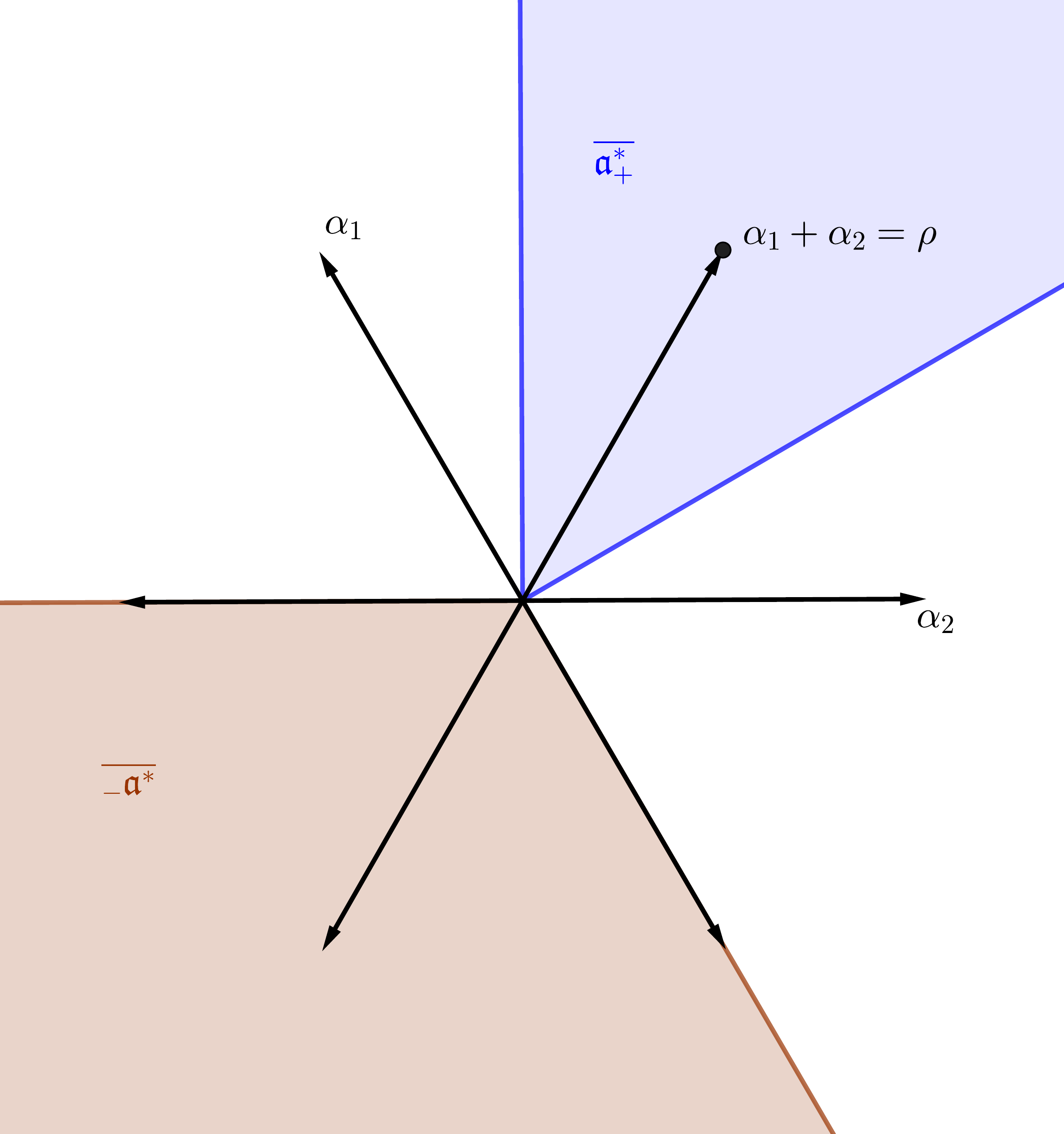}
 \caption{The root system for the special case $G=SL_3(\R)$: There are three positive roots $\Sigma^+=\{\alpha_1,\alpha_2,\alpha_1+\alpha_2\}$. As all root spaces are one dimensional the special element $\rho=\frac 12 \Sigma_{\alpha\in \Sigma^+} m_\alpha \alpha$ equals $\alpha_1+\alpha_2$.}
\end{figure}

\subsection{Principal series representations}\label{sec:principalreps}
The concept of a principal series representation is an important tool in representation theory of semisimple Lie groups. It can be described using different  pictures. We start with the \emph{induced picture}: Pick $\lambda\in \mf a_\C^\ast$ and $(\tau,V_\tau)$ an irreducible unitary representation of $M$. We define $$V^{\tau,\lambda} \coloneqq \{f\colon G\to V_\tau \text{ cont.}\mid f(gman)=e^{-(\lambda+\rho)\log a}\tau(m)\inv f(g), g\in G,m\in M, a\in A,n\in N\}$$ endowed with the norm $\|f\|^2 = \int_K \|f(k)\|^2dk$ where $dk$ is the normalized Haar measure on $K$. The group $G$ acts on $V^{\tau,\lambda}$ by the left regular representation. The completion $H^{\tau,\lambda}$ of $V^{\tau,\lambda}$ with respect to the norm is called \emph{induced picture of the (non-unitary principal series representation} with respect to $(\tau, \lambda)$. We also write $\pi_{\tau,\lambda}$ for this representation. If $\tau$ is the trivial representation then we write $H^\lambda$ and $\pi_\lambda$ and call it the \emph{spherical principal series} with respect to $\lambda$. Note that for equivalent irreducible unitary representations $\tau_1,\tau_2$ of $M$ the corresponding principal series representations are equivalent as representations as well. In particular, the Weyl group $W$ acts on the unitary dual of $M$ by $w\tau (m) = \tau (w\inv mw)$ where $w\in W$ is given by a representative in the normalizer of $A$ in $K$ and therefore $H^{\lambda,w\tau}$ is well-defined up to equivalence.

The \emph{compact picture} is given by restricting the function $f\colon G\to V_\tau$ to $K$, i.e. a dense subspace is given by $$ \{f\colon K\to V_\tau \text{ cont.}\mid f(km)=\tau(m)\inv f(k), k\in K,m\in M\}$$ with the same norm as above. In this picture the $G$-action is given by $$\pi_{\tau,\lambda}(g)f(k)= e^{-(\lambda+\rho)H(g\inv k)}f (k_{KAN} (g\inv k)),\quad g\in G, k\in K,$$ where $k_{KAN}$ is the $K$-component in the Iwasawa decomposition $G=KAN$. 

Recall the associated vector bundle $\mc V_\tau$ over a homogeneous space $L/H$ for a finite dimensional representation $(\tau,V_\tau)$ of $H$.  
Its total space is given by $\mc V_\tau = L\times_\tau V_\tau = (L\times V_\tau)/_\sim$ where $(lh,v)\sim(l,\tau(h)v)$ with $l\in L$, $h\in H$ and $v\in V_\tau$. The equivalence classes are denoted by $[l,v]$ and the projection is $[l,v]\mapsto lH$.  A section $s$ of this bundle can be identified with a function $\ov s\colon L\to V_\tau$ satisfying $\ov s(lh)=\tau(h)\inv \ov s(l)$.  We also have an $L$-action on $\mc V_\tau$ defined by $l' [l,v] \coloneqq [l'l,v]$. 

We can identify the principal series representation $H^{\tau,\lambda}$ with $L^2$-sections of an associated bundle. If $\mc V_\tau^{\mc B}$ denotes the restriction of  the vector bundle $\mc V_\tau$ over $G/M$ to $K/M\subseteq G/M$ we obtain the principal series representation with parameters $(\lambda,\tau)$ as the Hilbert space of  $L^2$-sections of the bundle $\mc V_\tau^{\mc B}$ over $K/M$ with the action $$\ov {\pi_{\tau,\lambda}(g)f}(k)= e^{-(\lambda+\rho)H(g\inv k)}\ov f (k_{KAN} (g\inv k)).$$

\subsection{Invariant differential operators}\label{sec:harishchandra}
Let $\mathbb D(G/K)$ be the algebra of \emph{$G$-invariant differential operators} on $G/K$, i.e. differential operators commuting with the left translation by elements  $g\in G$. Then we have an algebra isomorphism $\HC\colon \mathbb D(G/K)\to \text{Poly}(\mf a^\ast)^W$ from $\mathbb D(G/K)$ to the $W$-invariant complex polynomials on $\mf a^\ast$ which is called \emph{Harish-Chandra homomorphism} (see \cite[Ch.~II~Theorem 5.18]{gaga}). For $\lambda\in \mf a^\ast_\C$ let $\chi_\lambda$ be the character of $\mathbb D(G/K)$ defined by $\chi_\lambda(D)\coloneqq \HC(D)(\lambda)$. Obviously, $\chi_\lambda= \chi_{w\lambda}$ for $w\in W$. Furthermore, the $\chi_\lambda$ exhaust all characters of $\mathbb D(G/K)$ (see \cite[Ch.~III Lemma~3.11]{gaga}). We define the space of joint eigenfunctions $$E_\lambda \coloneqq\{f\in C^\infty(G/K)\mid Df = \chi_\lambda (D) f \,\forall D\in  \mathbb D(G/K)\}.$$ Note that $E_\lambda$ is $G$-invariant. 

\subsection{Poisson transform}\label{sec:Poissontransform}
The representation of $G$ on $E_\lambda$ can be described via the \emph{Poisson transform}: If $(H^{\tau,\lambda})^{-\omega}$ denotes the hyperfunction vectors in the principal series, then the Poisson transform $\mc P_\lambda$ maps $(H^{-\lambda})^{-\omega}$ into $E_\lambda$ $G$-equivariantly. It is given by $\mc P_\lambda f (xK)= \int _{K} f(k)e^{-(\lambda+\rho)H(x\inv k)} dk$ if $f$ is a sufficiently regular function in the compact picture of the principal series. If $f$ is given in the induced picture, then $\mc P_\lambda f(xK)$ simply is  $\int_K f(xk) dk$.

It is important to know for which values of $\lambda\in \mf a_\C^\ast$ the Poisson transform is a bijection.
By \cite{KKMOOT} we have that $\mc P_\lambda\colon (H^{-\lambda})^{-\omega}\to E_\lambda$ is a bijection if \begin{align}\label{eq:assumptionA}
-\frac{2\langle \lambda,\alpha\rangle}{\langle \alpha,\alpha\rangle}\not \in \N_{>0}  \quad \text{for all} \quad \alpha \in \Sigma^+.
\end{align}
In particular, $\mc P_\lambda$ is a bijection if $\Re \lambda \in \ov{\mf  a_+^\ast}$.

\subsection{$L^p$-bounds for elementary spherical functions}
One can show that in each joint eigenspace $E_\lambda$ there is a unique $K$-invariant function which has the value $1$ at the identity (see \cite[Ch.~IV~Corollary 2.3]{gaga}). We denote the corresponding bi-$K$-invariant function on $G$ by $\phi_\lambda$ and call it \emph{elementary spherical function}. Therefore, $\phi_\lambda = \phi_\mu$ iff $\lambda =w\mu$ for some $w\in W$. It is given by the Poisson transform of the constant function with value $1$ in the compact picture, i.e. $\phi_\lambda (g) = \int _K e^{-(\lambda+\rho)H(g\inv k)} dk$.

The aim of this section is to establish the following proposition (see Figure~\ref{fig:convex} for a visualization).
\begin{proposition}\label{prop:Lpequivalence}
 Let $p\in [2,\infty[$. Then the elementary spherical function $\phi_\lambda$ is in $L^{p+\varepsilon}(G)$ (where the $L^p$-space is defined via a Haar measure on $G$) for every $\varepsilon >0$ iff $\Re \lambda \in (1-2p\inv) \conv( W\rho)$ where $\conv(W\rho)$ is the convex hull of the finite set $W\rho$.
\end{proposition}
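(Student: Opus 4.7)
The plan is to compute $\|\phi_\lambda\|_{L^q(G)}$ for $q=p+\varepsilon$ by combining the Cartan integration formula with the Harish-Chandra asymptotic expansion of the spherical function. By bi-$K$-invariance and $G = K\ov{A^+}K$, the $L^q$-norm reduces to
\[
\|\phi_\lambda\|_{L^q(G)}^q \;\asymp\; \int_{\mf a_+} |\phi_\lambda(\exp H)|^q\, J(H)\, dH,
\]
where $J(H) = \prod_{\alpha\in \Sigma^+}(\sinh\alpha(H))^{m_\alpha}$ is the Jacobian and satisfies $J(H)\leq Ce^{2\rho(H)}$ globally on $\ov{\mf a_+}$, with $J(H)\asymp e^{2\rho(H)}$ on conic subsets of $\mf a_+$. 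Convergence is thus controlled by $|\phi_\lambda(\exp H)|^q e^{2\rho(H)}$ at infinity in $\mf a_+$.

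The key analytic input is the Harish-Chandra expansion in the uniform form of van den Ban--Schlichtkrull \cite{vdBanSchl87}:
\[
\phi_\lambda(\exp H) \;=\; \sum_{w\in W} c(w\lambda)\, e^{(w\lambda-\rho)(H)}\bigl(1 + o(1)\bigr),
\]
as $H\to\infty$ in $\mf a_+$, with remainders uniform enough on conic regions to survive $L^q$-integration. Up to polynomial corrections this yields $|\phi_\lambda(\exp H)| \asymp e^{(\max_{w\in W}\Re(w\lambda) - \rho)(H)}$. Substituting, integrability of each piece $e^{(q\Re(w\lambda) + (2-q)\rho)(H)}$ on $\mf a_+$ is equivalent to the exponent lying in the strict negative dual cone $-{}_+\mf a^\ast$, i.e.\ to
\[
(w\Re\lambda)(H) < (1-2/q)\rho(H) \quad \text{for all } H \in \ov{\mf a_+}\setminus\{0\} \text{ and every } w\in W.
\]

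Since $\rho$ is dominant, $\max_{w}(w\rho)(H)=\rho(H)$ on $\ov{\mf a_+}$, and the standard convex-hull duality $\sigma \in \conv(W\rho) \Longleftrightarrow \sigma(H)\leq \max_w(w\rho)(H)$ for all $H\in\mf a$ identifies the condition above with $\Re\lambda \in (1-2/q)\,\operatorname{int}\conv(W\rho)$. Intersecting over all $q = p+\varepsilon>p$ and using that $\conv(W\rho)$ is a polytope with $0 = |W|^{-1}\sum_w w\rho$ in its interior, one obtains the claimed closed condition $\Re\lambda \in (1-2p^{-1})\conv(W\rho)$. The necessity direction is symmetric: if the strict inequality fails for some $w$ and some $H_0\in \ov{\mf a_+}\setminus\{0\}$, the matching lower bound from the expansion shows that along the ray $tH_0$ the integrand of $\|\phi_\lambda\|_{L^q}^q$ fails to decay, forcing $\phi_\lambda \notin L^q(G)$.

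The main obstacle is making the asymptotic expansion uniform up to and near the walls of $\mf a_+$, where the Harish-Chandra $c$-function has poles and the leading exponentials coalesce; a pointwise estimate on a single interior ray does not by itself upgrade to an integrable majorant on the whole cone. This is exactly what the tempered uniform expansions of \cite{vdBanSchl87} are designed to handle, and invoking them turns the heuristic exponential bookkeeping above into honest two-sided bounds and hence the stated equivalence.
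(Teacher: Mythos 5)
Your overall framework — Cartan integration formula plus the van den Ban--Schlichtkrull expansion — is the same as the paper's, but several steps are not quite right, and the one genuinely nontrivial input is missing.

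First, the form of the expansion you write, $\phi_\lambda(\exp H) = \sum_{w\in W} c(w\lambda)\,e^{(w\lambda-\rho)(H)}(1+o(1))$, is the regular-$\lambda$ Harish-Chandra $c$-function expansion; it fails at singular $\lambda$, where leading exponents coalesce and the $c$-function has zeros and poles. What \cite{vdBanSchl87} actually provides (and what the paper quotes) is a converging series $\phi_\lambda(\exp H) = \sum_{\xi\in X(\lambda)} p_\xi(\lambda,H)\,e^{\xi(H)}$ with $X(\lambda)=\{w\lambda-\rho-\mu : w\in W,\ \mu\in\N_0\Pi\}$ and \emph{polynomial} coefficients $p_\xi(\lambda,\cdot)$ of degree $\leq |W|$, valid for all $\lambda$. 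You acknowledge the wall problem but do not resolve it; writing the $c$-function form and then saying ``invoking vdBanSchl87 turns the heuristic into honest two-sided bounds'' papers over exactly the difficulty the proposition hinges on.

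Second, and more importantly, the two directions are \emph{not} symmetric. The sufficiency direction does not need the expansion at all: after reducing by $W$-invariance to $\Re\lambda$ dominant, the standard upper bound $|\phi_\lambda(a)|\leq C e^{(\Re\lambda-\rho)(\log a)}(1+\rho(\log a))^d$ (Knapp, Prop.~7.15) together with $J(H)\leq e^{2\rho(H)}$ gives integrability directly. The necessity direction, by contrast, needs a \emph{lower} bound, which cannot follow formally from the expansion alone because the leading term could in principle vanish. The crucial fact — which the paper explicitly flags as ``the main ingredient of the proof'' — is that the top coefficient $p_{\lambda-\rho}(\lambda,\cdot)\neq 0$ (Theorem~10.1 of \cite{vdBanSchl87}). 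Without citing this, your claimed two-sided asymptotic $|\phi_\lambda(\exp H)| \asymp e^{(\max_w\Re(w\lambda)-\rho)(H)}$ is unjustified (it can also fail pointwise by oscillatory cancellation among the $e^{i\Im(w\lambda)(H)}$; the way out, as in Knapp's Thm.~8.48 which the paper follows, is an integrated argument rather than a pointwise $\asymp$). Your reduction via the support function $\max_w(w\rho)(H)$ and the passage from open to closed conditions by intersecting over $q>p$ are both fine, but the lower-bound core of the argument is absent.
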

\begin{proof}

First of all note that we only have to consider $\Re\lambda \in \ov {\mf a_+^\ast}$ since $\phi_\lambda=\phi_{\mu}$ iff $\lambda=w\mu $ for some $w\in W$. In this case $\Re \lambda \in (1-2p\inv) \conv( W\rho)$ is equivalent to $\Re\lambda \in (1-2p\inv)\rho + \ov {{}_-\mf a^\ast}$ (see \cite[Ch.~IV~Lemma~8.3]{gaga}). 
\begin{figure}[ht]
\centering
 \includegraphics[height=10cm,trim = 0pt 30pt  0pt 70pt,clip]{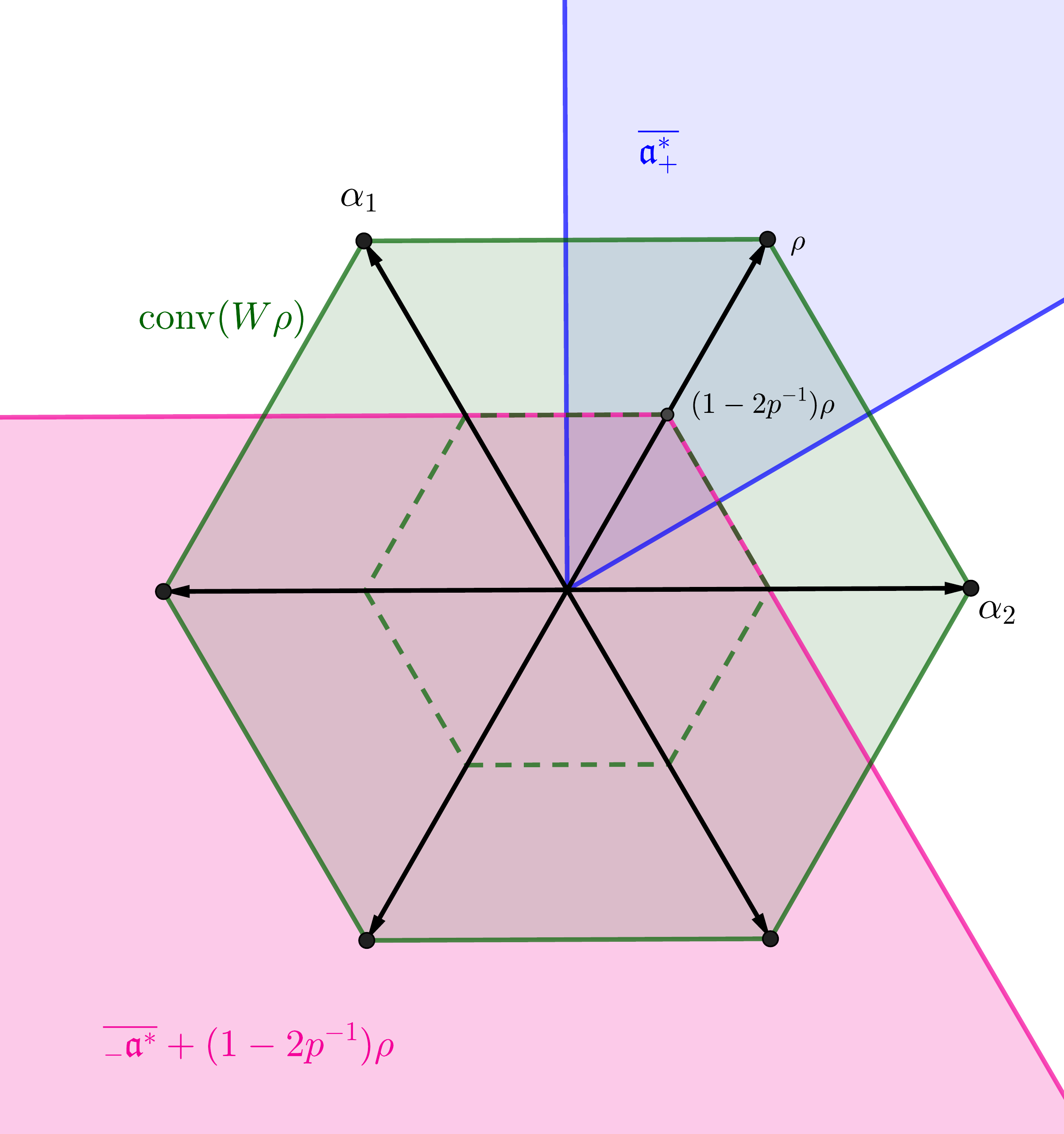}
\caption{Visualization of the regions appearing in Proposition~\ref{prop:Lpequivalence} for the case $G=SL_3(\R)$: The green dashed region is the boundary of $(1-2p\inv) \conv( W\rho)$. Its intersection with the positive Weyl chamber $\ov {\mf a_+^\ast}$ (blue cone) equals $(1-2p\inv)\rho + \ov {{}_-\mf a^\ast}$ intersected with $\ov {\mf a_+^\ast}$.}\label{fig:convex}
\end{figure}

With this remark, one implication of the proposition is a straight forward consequence of standard estimates for elementary spherical functions: Suppose that $\Re\lambda \in \ov {\mf a_+^\ast}$ and $\Re\lambda \in (1-2p\inv)\rho + \ov {{}_-\mf a^\ast}$. Then we have the following bound on $\phi_\lambda$ (see \cite[Ch. VII Prop. 7.15]{Kna86}):
$$|\phi_\lambda(a)|\leq C e^{(\Re \lambda-\rho)(\log a)}(1+\rho(\log a))^d, \quad a\in A^+$$ where $C$ and $d$ are constants $\geq 0$. By the integral formula for $G=K\ov{A^+}K$ (see \cite[Ch. I Theorem 5.8]{gaga}) and the bi-$K$-invariance of $\phi_\lambda$ we have 
\begin{align*}
 \int_G |\phi_\lambda(g)|^{p+\varepsilon}dg &= \int_{\mf a_+} |\phi_\lambda(\exp H)|^{p+\varepsilon} \prod_{\alpha\in \Sigma^+} \sinh(\alpha(H))^{m_\alpha} dH\\
 &\leq \int_{\mf a_+} (C e^{(\Re \lambda-\rho)H}(1+\rho (H))^d)^{p+\varepsilon} e^{2\rho (H)} dH
\end{align*}
for a suitable Lebesgue measure on $\mf a$. Because of $\Re\lambda \in (1-2p\inv)\rho + \ov {{}_-\mf a^\ast}$ we have $$(p+\varepsilon)(\Re\lambda-\rho )(H)\leq -(2+2\varepsilon p\inv)\rho(H).$$
Hence, $$\int_G |\phi_\lambda(g)|^{p+\varepsilon}dg\leq C^{p+\varepsilon} \int_{\mf a_+} (1+\rho (H))^{d(p+\varepsilon)} e^{-2\varepsilon p\inv\rho (H)} dH$$
and we see that the latter is indeed finite by coordinizing $\mf a_+$ by $x_j\leftrightarrow \alpha_j(H)$ with $x_j >0$. Then $dH$ is a multiple of $dx$ and $\rho (H) =\sum x_j \rho_j$ with $\rho_j >0$. Therefore $\phi_\lambda\in L^{p+\varepsilon}(G)$.

The opposite implication  will be proved by combining the proof of \cite[Theorem 8.48]{Kna86}  with  \cite{vdBanSchl87}: According to \cite[Corollary~16.2]{vdBanSchl87} the elementary spherical function $\phi_\lambda$ has a converging expansion 
\begin{equation}
 \phi_\lambda (\exp H)= \sum_{\xi\in X(\lambda)} p_\xi(\lambda,H) e^{\xi(H)}, \quad H\in \mf a_+,
\end{equation}
where $X(\lambda)=\{w\lambda- \rho -\mu\mid w\in W, \mu \in \N_0\Pi\}$ and the $p_\xi(\lambda,\cdot)$ are polynomials of degree $\leq |W|$. The series converges absolutely on $\mf a_+$ and uniformly on each subchamber $\{H\in \mf a_+\mid \alpha_i(H)\geq \varepsilon_i>0\}$.
The main ingredient of the proof of Proposition~\ref{prop:Lpequivalence} is the fact that  (see \cite[Theorem~10.1]{vdBanSchl87})
\begin{equation}
 p_{\lambda-\rho}(\lambda,\cdot) \neq 0.
\end{equation}

Now, if $\phi_\lambda\in L^{p+\epsilon}(G)$, the proof of \cite[Theorem 8.48]{Kna86} shows that $\Re\langle \lambda-(1 -2(p+\epsilon)\inv) \rho,\omega_j\rangle < 0$. Hence $\Re \lambda - (1-2p\inv)\rho\in \ov{ {}_-\mf a^\ast}$.
\end{proof}

\subsection{Positive definite functions and unitary representations}\label{sec:sphericalpos}
In this section we recall the correspondence between positive semidefinite elementary spherical functions and irreducible unitary spherical representations.
Recall first that a continuous function $f\colon G\to \C$ is called \emph{positive semidefinite} if the matrix $(f(x_i\inv x_j))_{i,j}$ for all $x_1,\ldots, x_k\in G$ is positive semidefinite. If $f$ is positive semidefinite, then $f$ is bounded  by $f(1)$ and one has  $f(x\inv)=\ov{ f(x)}$. Moreover, we can define a unitary representation $\pi_f$ associated to $f$ in the following way: If $R$ denotes the right regular representation of $G$, then $\pi_f$ is the completion of the space spanned by $R(x)f$ with respect to the inner product defined by $\langle R(x)f, R(y)f\rangle\coloneqq f(y\inv x)$ which is positive definite. $G$ acts unitarily on this space by the right regular representation. If $f(g)=\langle\pi(g) v,v\rangle$ is a matrix coefficient of a unitary representation $\pi$, then $f$ is positive semidefinite and $\pi_f$ is contained in $\pi$.

Secondly, recall that a unitary representation is called \emph{spherical} if it contains a non-zero $K$-invariant vector. Denote by $\widehat G_{\text{sph}}$ the subset of the unitary dual consisting of spherical representations.   We then have a 1:1-correspondence  between positive semidefinite elementary spherical functions and  $\widehat G_{\text{sph}}$ given by $\phi_\lambda\mapsto \pi_{\phi_\lambda}$  (see \cite[Ch.~IV~Theorem~3.7]{gaga}). The preimage of an irreducible unitary spherical representation $\pi$ with normalized $K$-invariant vector $v_K$ is given by $g\mapsto \langle \pi(g)v_K,v_K\rangle$.  
If the set $\widehat G_{\text{sph}}$ is endowed with the Fell topology (see \cite[Appendix~F.2]{propT}) and we use the topology of convergence on compact sets on the set of elementary spherical functions, then the above correspondence is a homeomorphism as is easily seen from the definitions.

\section{Ruelle-Taylor resonances for the Weyl chamber action}
We keep the notation from Section~\ref{sec:notation}. Let $\G$ be a discrete, torsion-free, cocompact subgroup of $G$. Then the biquotient $\mc M = \G\backslash G/M$ is a smooth compact Riemannian manifold where the Riemannian structure is induced by the inner product on $\mf g$. More precisely, the tangent bundle $T\mc M$  of $\mc M$ is given by the associated vector bundle $\G\backslash (G\times_{\Ad|_M} (\mf a \oplus \mf n \oplus \mf n_-))$  and the norm of some $\G[g,Y], g\in G, Y\in \mf a \oplus \mf n \oplus \mf n_-$ is given by the norm of $Y\in \mf g$. We have a well-defined right $A$-action on $\mc M$:
$$(\G g M )a \coloneqq \G g a M, \quad a\in A, g\in G.$$
Therefore we have an $\mf a$-action by smooth vector fields
$${}_\G X\colon  \mf a \to C^\infty(\mc M,T\mc M), \quad {}_\G X_H f(\G g M) = \d f(\G g e^{tH} M)$$ which we call \emph{Weyl chamber action}.

For later use we denote by $  X\colon\mf a\to \Diff^1 (G/M)$ the corresponding action on $G/M$.

\begin{proposition}\label{prop:defAnosov}
 The $A$-action on $\mc M$ is Anosov. More precisely,  each $H\in \mf a_+$ is transversally hyperbolic with the splitting $E_0 = \G \backslash (G\times_{\Ad|_M} \mf a)$, $E_s = \G \backslash(G\times_{\Ad|_M} \mf n ) $, and  $E_u = \G \backslash (G\times_{\Ad|_M} \mf n_-))$.  Moreover, for fixed $H_0\in \mf a_+$ the dynamically defined positive Weyl chamber  $\mc W =\{H\in \mf a \mid H \text{ is transversally hyperbolic with the same splitting as } H_0\}$ equals $\mf a_+$. Hence the two notions of positive Weyl chambers agree.
 \end{proposition}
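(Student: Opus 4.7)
My plan is to directly compute the derivative of the flow $\varphi_t^{X_H}$ on the associated bundle model of $T\mc M$, read off the exponential rates from the root space decomposition, and then characterize $\mc W$ in terms of root-positivity.

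First, I would unpack the identification of the tangent bundle. Using the Bruhat decomposition $\mf g = \mf m \oplus \mf a \oplus \mf n \oplus \mf n_-$ and the transitive (left) action of $G$ on $G/M$, the isomorphism $T\mc M \cong \G\backslash(G \times_{\Ad|_M}(\mf a\oplus\mf n\oplus\mf n_-))$ sends $[g,Y]$ to $\frac{d}{ds}\big|_{s=0}\G g \exp(sY) M$. Under this identification, the commutation $\exp(sY)\exp(tH) = \exp(tH)\exp(s\,\Ad(\exp(-tH))Y)$ yields
\[
 d\varphi_t^{X_H}[g,Y] = \bigl[g\exp(tH),\; e^{-t\,\ad H}Y\bigr].
\]
In particular the splitting $E_0\oplus E_s\oplus E_u = \G\backslash(G\times_M\mf a)\oplus \G\backslash(G\times_M\mf n)\oplus\G\backslash(G\times_M\mf n_-)$ is manifestly $\varphi_t^{X_H}$-invariant, and from $X_H(\G g M) = [g,H]$ we see that $\spa\{X_H : H\in\mf a\}$ equals the $\mf a$-subbundle $E_0$.

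Second, I would read off the hyperbolicity constants. Since the chosen inner product on $\mf g$ is $\theta$-invariant, the root space decomposition is orthogonal and $\ad H$ acts on $\mf g_{\pm\alpha}$ by the scalar $\pm\alpha(H)$. For $H\in\mf a_+$ set $\nu:=\min_{\alpha\in\Sigma^+}\alpha(H)>0$. Then for $Y=\sum_{\alpha\in\Sigma^+}Y_\alpha\in\mf n$ one has
\[
 \bigl\|e^{-t\,\ad H}Y\bigr\|^2 = \sum_{\alpha\in\Sigma^+} e^{-2t\alpha(H)}\|Y_\alpha\|^2 \leq e^{-2\nu t}\|Y\|^2 \qquad (t\geq 0),
\]
and the analogous estimate with $|t|$ in place of $t$ on $\mf n_-$ for $t\leq 0$. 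Since the norm on $T\mc M$ at $[g,Y]$ is by definition $\|Y\|_{\mf g}$, this gives the Anosov estimates with $C=1$; hence every $H\in\mf a_+$ is transversally hyperbolic with the advertised splitting.

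Third, for the equality $\mc W=\mf a_+$, I would fix $H_0\in\mf a_+$ so that $E_0,E_s,E_u$ are the bundles above, and then argue in both directions. If $H'\in\mf a_+$ the previous calculation applies verbatim, so $\mf a_+\subseteq\mc W$. Conversely, if $H'\in\mc W$, then $d\varphi_t^{X_{H'}}$ must contract $E_s = \G\backslash(G\times_M\mf n)$ as $t\to+\infty$; applying the derivative formula to an element $[g,Y_\alpha]$ with $Y_\alpha\in\mf g_\alpha\subset\mf n$ forces $e^{-t\alpha(H')}\to 0$, i.e.\ $\alpha(H')>0$ for every $\alpha\in\Sigma^+$, which is exactly $H'\in\mf a_+$.

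I do not anticipate a genuine obstacle here — everything reduces to the derivative-of-flow computation together with the fact that $\ad H$ is diagonalized by the (orthogonal) restricted root decomposition. The only point that requires a moment of care is verifying that the associated-bundle identification is compatible with the $M$-quotient (so that $\Ad(m)$ preserves both the splitting and the norm, which it does since $M\subset K$ centralizes $\mf a$ and permutes each $\mf g_{\pm\alpha}$ among themselves within the orthogonal decomposition).
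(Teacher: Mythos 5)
Your proposal is correct and matches the paper's approach: the paper's one-line proof ("immediate from the definitions with the observation that $\mf g_\alpha\perp\mf g_\beta\perp\mf a$") is exactly the orthogonality of the restricted root decomposition that lets you read off the hyperbolicity constants fiberwise from $e^{-t\,\ad H}$, which is what you have spelled out in full. The only (cosmetic) imprecision is the closing remark that $\Ad(m)$ "permutes each $\mf g_{\pm\alpha}$ among themselves" — since $M$ centralizes $\mf a$, $\Ad(m)$ in fact preserves each root space $\mf g_\alpha$ individually, which is what makes the associated bundle splitting well defined.
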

 \begin{proof}
  This is immediate from the definitions with the observation that $\mf g_\alpha\perp \mf g_\beta\perp \mf a$ for $\alpha\neq  \beta\neq 0$ in $\Sigma$.
 \end{proof}

\subsection{Lifted Weyl chamber action}
In order to define horocycle operators we  generalize the Weyl chamber action to associated vector bundles. Let $(\tau, V_\tau)$ be a finite-dimensional unitary representation of $M$. Then we have defined the associated vector bundle $\mc V_\tau = G\times_\tau V_\tau$ over $G/M$ (see Section~\ref{sec:notation}). Recall that the total space $\mc V_\tau$ carries the $G$-action $g[g',v]=[gg',v]$ and we have the left regular action on smooth sections of $\mc V_\tau$: $$(gs)(g'M)\coloneqq g (s (g\inv g'M)), \quad s\in C^\infty (G/M,\mc V_\tau).$$
Since $G/M$ is a reductive homogeneous space, i.e. $T(G/M)=G\times_{\Ad|_M} (\mf a \oplus \mf n \oplus \mf n_-)$, we have a  canonical connection $\nabla$ on $\mc V_\tau$ 
given by 
\begin{equation*}
 \nabla_ {{\mf X}} s (gM)= \d \ov s (g\exp(t\ov{\mf X} (g)),
\end{equation*}
where $s$ is a smooth section identified with a smooth function $\ov s\colon G\to V_\tau$ with $\ov s(gm)=\tau(m\inv)\ov s(g)$, $m\in M$, $g\in G$, and $\mf X$ is a vector field  of $G/M$ identified with a smooth function $\ov {\mf X}\colon G\to \mf a\oplus \mf n \oplus \mf n_-$ which is right $M$-equivariant.

The quotient bundle $\G\backslash \mc V_\tau$ is a Riemannian vector bundle over $\mc M$, where the Riemannian structure is induced by the inner product on $V_\tau$. We identify smooth sections $s$ of this bundle with smooth functions $\ov s \colon G\to V_\tau$ with $\ov s(\g gm)=\tau(m\inv) \ov s(g)$ for all $ \g\in \G,$ $g\in G$, and $m\in M$. 

The canonical connection $\nabla$ 
descends to a connection ${}_\G\nabla\colon C^\infty(\mc M,\G\backslash \mc V_\tau)\to C^\infty(\mc M,\G\backslash \mc V_\tau\otimes T^\ast \mc M)$ and we have the following formula:
\begin{equation}\label{eq:connectionformula}
 \ov{{}_\G\nabla  s ({\mf X})}(g)\coloneqq \ov{{}_\G\nabla_ {{\mf X}} s} (g)= \d \ov s (g\exp(t\ov{\mf X} (g)),
\end{equation}
where $s$ is a smooth section identified as above and $\mf X$ is a vector field  of $\mc M$ identified with a smooth function $\ov {\mf X}\colon G\to \mf a\oplus \mf n \oplus \mf n_-$ which is left $\G$-invariant and right $M$-equivariant.

\begin{definition}
 The \emph{lifted Weyl chamber action} is defined as $${}_\G \mathbf X^\tau \colon\mf a\to \Diff^1 (\mc M, \G\backslash \mc V_\tau),  \quad {}_\G \mathbf X^\tau_H \coloneqq {}_\G\nabla_ {{\mf X_H}},$$ where $ {\mf X_H}$ is the vector field identified with the constant mapping $G\to \mf a\subseteq \mf a\oplus \mf n \oplus \mf n_-$, $g\mapsto H$.
\end{definition}
The fact that ${}_\G\nabla$ is a covariant derivative implies that ${}_\G \mathbf X^\tau$ is an admissible lift of the Weyl chamber action in the sense of Equation~\eqref{eq:admissiblelift}. 

For later use we denote by $ \mathbf X^\tau \colon\mf a\to \Diff^1 (G/M,  \mc V_\tau)$ the corresponding action on $G/M$.

We can find a non-trivial tube domain in $\mf a_\C^\ast$ which is independent of $\tau$ and contains all Ruelle-Taylor resonances for the lifted Weyl chamber action.
\begin{proposition}\label{prop:locationgeneral}
 The set of Ruelle-Taylor resonances $\sigma_{\RT}({}_\G \mathbf{X}^\tau)$ is contained in $\ov{{}_-\mf a^ \ast} + i\mf a^\ast$.
\end{proposition}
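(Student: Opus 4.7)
The plan is to apply Proposition~\ref{prop:generalAnosov}, which, combined with Proposition~\ref{prop:defAnosov}, tells us that
\[
\sigma_{\RT}({}_\G\mathbf X^\tau) \subset \{\lambda \in \mf a_\C^\ast \mid \Re\lambda(H) \leq C_{L^2}(H)\ \forall H\in\mf a_+\}.
\]
Since $\ov{{}_-\mf a^\ast} = \{\lambda\in\mf a^\ast \mid \lambda(H)\leq 0\ \forall H\in\mf a_+\}$ by definition, the whole statement reduces to showing $C_{L^2}(H)\leq 0$ for every $H\in\mf a_+$. In other words, it suffices to prove that the one-parameter group $e^{-t{}_\G\mathbf X^\tau_H}$ acts by isometries on $L^2(\mc M,\G\backslash\mc V_\tau)$.

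To verify this, I would first write the lifted operator explicitly. A section $s$ of $\G\backslash\mc V_\tau$ is a function $\ov s:G\to V_\tau$ which is left $\G$-invariant and satisfies $\ov s(gm)=\tau(m)\inv\ov s(g)$. Under the identification~\eqref{eq:connectionformula} with $\ov{\mf X_H}(g)\equiv H$, the lifted Weyl chamber action becomes
\[
(\,{}_\G\mathbf X^\tau_H\ov s)(g) = \d \ov s(g\exp(tH)),
\]
so its flow is simply right translation by $\exp(-tH)$:
\[
\overline{(e^{-t{}_\G\mathbf X^\tau_H}s)}(g) = \ov s(g\exp(-tH)).
\]

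The next step is to observe that this flow is unitary. Since $\tau$ is a unitary representation, the pointwise fiber norm $|\ov s(g)|_{V_\tau}^2$ descends to a well-defined function on $\mc M$. The Riemannian measure on $\mc M=\G\backslash G/M$ is the push-forward of a Haar measure on $G$, which is in particular right-$A$-invariant (and $M$ and $A$ commute), so the measure on $\mc M$ is invariant under the right $A$-action. Changing variables $g\mapsto g\exp(tH)$ therefore gives
\[
\|e^{-t{}_\G\mathbf X^\tau_H}s\|_{L^2}^{2} = \int_{\mc M} |\ov s(g\exp(-tH))|_{V_\tau}^2\, d\mu(g) = \|s\|_{L^2}^2.
\]
Thus $\|e^{-t{}_\G\mathbf X^\tau_H}\|_{L^2\to L^2}=1$ for all $t>0$, which forces $C_{L^2}(H)=0$ for every $H\in\mf a_+=\mc W$, and the bound on $\Re\lambda$ follows.

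There is no real obstacle here: the statement is essentially a consequence of the fact that the Weyl chamber action is volume-preserving and the chosen lift is generated by a covariant derivative along a unit-norm vector field with respect to a unitary structure. The one point to be careful about is checking that the identification of sections with equivariant functions on $G$ really does turn the covariantly-defined operator ${}_\G\mathbf X^\tau_H={}_\G\nabla_{X_H}$ into pure translation, which is precisely the content of the formula~\eqref{eq:connectionformula} applied to the constant function $\ov{\mf X_H}\equiv H$.
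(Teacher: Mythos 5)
Your proof is correct and takes essentially the same approach as the paper's: reduce via Proposition~\ref{prop:generalAnosov} to showing $C_{L^2}^\tau(H)=0$, identify the flow of ${}_\G\mathbf X^\tau_H$ with right translation by $\exp(-tH)$ via equation~\eqref{eq:connectionformula}, and conclude unitarity from the right-$A$-invariance of the measure together with the unitarity of $\tau$. The only cosmetic difference is that the paper integrates over $\G\backslash G$ with the right-invariant Radon measure while you integrate over $\mc M=\G\backslash G/M$, but these agree up to normalization.
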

\begin{proof}

By Proposition~\ref{prop:generalAnosov} we have $$\sigma_{\RT}({}_\G \mathbf{X}^\tau)\subseteq \{\lambda\in \mf a_\C^\ast\mid \Re (\lambda(H))\leq C_{L^2}^\tau(H) \quad \forall H\in \mf a_+\}.$$ Hence, it remains to show that $C_{L^2}^\tau(H)\coloneqq \inf \{C>0 \mid \|e^{-t{}_\G \mathbf{X}^\tau_H}\|_{L^2\to L^2} \leq e^{Ct} \quad \forall t>0\} = 0$ for all $H\in \mf a_+$. We show the stronger statement that $e^{-t{}_\G \mathbf{X}^\tau_H}$ is unitary.

Since $M$ commutes with $A$ we have a well defined action of $A$  on $\G\backslash \mc V_\tau$. It is given by $(\G[g,v])a=\G[ga,v]$. This action gives rise to an $A$-action on sections of the bundle  $\G\backslash \mc V_\tau$ defined via $(af)(x)=f(xa)a\inv$ with $f \in C^\infty(\mc M,\G\backslash \mc V_\tau)$, $x\in \mc M$ and $a\in A$. If we identify $f$ with a equivariant function $\ov f\colon G\to V_\tau$, then $\ov{(af)}(g)=\ov f(ga)$. Let  $d\G  g$ be the normalized right $G$-invariant Radon measure on $\G\backslash G$. Then the $L^2$-norm of $f$ is given by $\|f\|^2_{L^2} = \int_{\G\backslash G} \|\ov f(g)\|_{V_\tau}^2 d\G g$ and it follows that the  $A$-action  continued to $L^2(\mc M, \G\backslash \mc V_\tau)$ is unitary. By definition $e^{-t{}_\G \mathbf{X}^\tau_H} f = \exp(-tH)f $ for $f\in L^2(\mc M,\G\backslash \mc V_\tau)$ and therefore $e^{-t{}_\G \mathbf{X}^\tau_H}$ is unitary.
\end{proof}

\subsection{First band resonances and horocycle operators}\label{sec:horocycle}
In analogy to the rank one setting we make the following definition (see \cite[Definition~2.11]{KW17} and \cite[Definition~3.1]{GHW21} in the scalar case).
\begin{definition}
 We call $\lambda\in\sigma_{\RT}({}_\G \mathbf{X}^\tau)$ a \emph{first band resonance} and write $\lambda\in\sigma_{\RT}^0({}_\G \mathbf{X}^\tau)$ if the vector space $$\res_{{}_\G \mathbf{X}^\tau}^0(\lambda) = \{u\in \res_{{}_\G \mathbf{X}^\tau} (\lambda)\mid {}_\G \nabla _{\mf X} u = 0 \, \forall \mf X \in C^\infty(\mc M, E_u)\}$$ of \emph{first band resonant states} is non-trivial. 
\end{definition}
The goal of this section is to prove that in a certain neighborhood of $0$ in $\mf a_\C^\ast$ each Ruelle-Taylor resonance is a first band resonance and $\res_{{}_\G \mathbf{X}^\tau}^0(\lambda) =  \res_{{}_\G \mathbf{X}^\tau} (\lambda)$. This will be done by introducing so called horocycle operators as follows. 

Recall that $T\mc M = \G\backslash(G\times_{\Ad|_M} \mf a\oplus \mf n\oplus \mf n_-)$ and the bundle  $\G\backslash(G\times_{\Ad|_M} \mf n)$ decomposes as $\bigoplus _{\alpha\in \Sigma^+} \G\backslash (G\times _{\Ad|_M} \mf g_\alpha)$, and similarly for $\mf n_-$. Therefore, the cotangent bundle $T^\ast\mc M$ is the Whitney sum $\G\backslash(G\times_{\Ad^\ast|_M} \mf a^\ast )\oplus\bigoplus_{\alpha\in \Sigma} \G\backslash(G\times_{\Ad^\ast|_M} \mf g_\alpha^\ast)$. Let us denote the coadjoint action of $M$ on the complexification of $\mf g_\alpha^\ast$ by $\tau_\alpha$. Note that $\tau_\alpha$ is unitary with respect to the inner product induced by the Killing form. We can now define $$\pr_\alpha\colon (T^\ast\mc M)_\C \to \G\backslash \mc V_{\tau_\alpha}$$ by fiber-wise restriction to the subbundle $\G\backslash (G\times _{\Ad|_M} \mf g_\alpha)$. This induces a map $$\widetilde \pr _\alpha\colon C^\infty (\mc M,\G\backslash\mc V_\tau\otimes (T^\ast\mc M)_\C )\to C^\infty (\mc M, \G\backslash \mc V_{\tau\otimes \tau_\alpha}).$$
\begin{definition}
 If ${}_\G\nabla ^ \C\colon C^\infty(\mc M,\G\backslash \mc V_\tau)\to C^\infty(\mc M,\G\backslash \mc V_\tau\otimes (T^\ast \mc M)_\C)$ denotes the complexification of the canonical connection ${}_\G\nabla $, then the \emph{horocycle operator} $\mc U_\alpha$ for $\alpha\in \Sigma$ is defined as the composition $$\mc U_\alpha \coloneqq \widetilde \pr_\alpha \circ {}_\G\nabla ^ \C \colon  C^\infty(\mc M,\G\backslash \mc V_\tau)\to C^\infty (\mc M, \G\backslash \mc V_{\tau\otimes \tau_\alpha}).$$
\end{definition}
Note that we have the explicit formula 
\begin{equation} \label{eq:horocycleformula}
 \ov {\mc U_\alpha s } (g)(Y) =\d \ov s (g\exp(tY)), \quad s\in C^\infty (\mc M,\G\backslash \mc V_\tau), Y\in \mf g_\alpha,
\end{equation}
 if we again use the identification of  sections of some associated vector bundle with left $\G$-invariant and right $M$-equivariant functions  indicated by $\ov \cdot$ and the identification $V_\tau\otimes \mf g_\alpha^\ast \simeq \mathrm{Hom}(\mf g_\alpha,V_\tau)$.

 We should point out that the space of first band resonant states can be rewritten with the horocycle operators as 
 \begin{equation}
  \res_{{}_\G \mathbf{X}^\tau}^0(\lambda) = \{u\in \res_{{}_\G \mathbf{X}^\tau} (\lambda)\mid \mc U_{-\alpha} u = 0 \,\, \forall \alpha \in \Sigma^+\}.
 \end{equation}
Note that in the case of constant curvature manifolds (i.e. the real hyperbolic case $G=PSO(n,1)$ of rank 1) there is only one positive root and our definition reduces to the original one due to Dyatlov and Zworski (see \cite[p.~931]{DFG15}). Furthermore, our definition extends the definition of the horocycle operators  for arbitrary $G$  of rank one (see \cite{KW17}). 

The horocycle operators fulfill the following important commutation relation.
\begin{lemma}\label{la:commutationhoro}
 $$ \forall H\in \mf a\colon \quad {}_\G \mathbf{X}^{\tau\otimes \tau_\alpha}_H \mc U_\alpha - \mc U_\alpha {}_\G \mathbf{X}^\tau_H = \alpha (H) \mc U_\alpha .$$
\end{lemma}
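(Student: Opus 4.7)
The strategy is a direct local computation at the level of the equivariant functions $\overline{s}\colon G\to V_\tau$, using the explicit formula \eqref{eq:horocycleformula} for $\mc U_\alpha$ together with the analogous formula \eqref{eq:connectionformula} for the lifted action, which in this case reads
\[
 \overline{{}_\G\mathbf{X}^\tau_H s}(g) = \frac{d}{dr}\bigg|_{r=0}\overline{s}(g\exp(rH)),\qquad g\in G,\; H\in\mf a.
\]
Since both operators are described by right-invariant differentiations on $G$, the whole statement boils down to commuting $\exp(rH)$ past $\exp(tY)$ for $Y\in\mf g_\alpha$.

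The plan is to fix $s\in C^\infty(\mc M,\G\backslash\mc V_\tau)$, $H\in\mf a$, $Y\in\mf g_\alpha$, $g\in G$, and compute $\overline{\mc U_\alpha\,{}_\G\mathbf{X}^\tau_H s}(g)(Y)$ and $\overline{{}_\G\mathbf{X}^{\tau\otimes\tau_\alpha}_H\mc U_\alpha s}(g)(Y)$ separately. Combining the two formulas above gives
\[
 \overline{\mc U_\alpha\,{}_\G\mathbf{X}^\tau_H s}(g)(Y) = \frac{d}{dt}\bigg|_{t=0}\frac{d}{dr}\bigg|_{r=0}\overline{s}\bigl(g\exp(tY)\exp(rH)\bigr),
\]
\[
 \overline{{}_\G\mathbf{X}^{\tau\otimes\tau_\alpha}_H\mc U_\alpha s}(g)(Y) = \frac{d}{dr}\bigg|_{r=0}\frac{d}{dt}\bigg|_{t=0}\overline{s}\bigl(g\exp(rH)\exp(tY)\bigr),
\]
where in the second line one uses that $\mc U_\alpha s$ is a section of $\G\backslash\mc V_{\tau\otimes\tau_\alpha}$ so its lifted Weyl chamber action is again given by right-translating the argument by $\exp(rH)$.

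The key algebraic input is the root relation $[H,Y]=\alpha(H)Y$, which integrates to $\Ad(\exp(rH))Y = e^{r\alpha(H)}Y$ and hence to the group identity
\[
 \exp(rH)\exp(tY) = \exp\!\bigl(t\,e^{r\alpha(H)}Y\bigr)\exp(rH).
\]
Inserting this in the second formula and differentiating in $t$ at $t=0$ introduces the factor $e^{r\alpha(H)}$. Differentiating the product in $r$ at $r=0$ and using the Leibniz rule then yields exactly two terms: one from differentiating $e^{r\alpha(H)}$ gives $\alpha(H)\overline{\mc U_\alpha s}(g)(Y)$, and the one from differentiating the remaining $\overline{s}$-expression gives the mixed derivative appearing in $\overline{\mc U_\alpha\,{}_\G\mathbf{X}^\tau_H s}(g)(Y)$ (after exchanging the order of partial derivatives, which is legitimate by smoothness).

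Putting it all together we obtain $\overline{{}_\G\mathbf{X}^{\tau\otimes\tau_\alpha}_H\mc U_\alpha s} = \alpha(H)\overline{\mc U_\alpha s} + \overline{\mc U_\alpha\,{}_\G\mathbf{X}^\tau_H s}$, which is the asserted identity. There is no real obstacle here: the argument is a few lines of chain rule once the group-theoretic commutation identity is in place, and the required $\G$-invariance and $M$-equivariance of $\overline{s}$ play no role because we only translate on the right. The only mild subtlety worth flagging explicitly is that the operator ${}_\G\mathbf{X}^{\tau\otimes\tau_\alpha}_H$ acts via the canonical connection on a different bundle, but because the canonical connection is defined uniformly by right-translation in $\mf a$-directions (which lie in the reductive complement), its expression on equivariant lifts is formally the same as for $\tau$.
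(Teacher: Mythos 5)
Your proof is correct and follows essentially the same route as the paper: a direct local computation of both sides at the level of $M$-equivariant lifts using the explicit formulas for the canonical connection and the horocycle operator. The only cosmetic difference is that you make the step explicit via the identity $\exp(rH)\exp(tY)=\exp(t\,e^{r\alpha(H)}Y)\exp(rH)$ and the Leibniz rule, whereas the paper simply observes that the difference of the two mixed second derivatives equals the first derivative in the direction $[H,Y]=\alpha(H)Y$; these are two phrasings of the same computation.
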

\begin{proof}
 Using the formulas \eqref{eq:connectionformula} and \eqref{eq:horocycleformula} we obtain 
 $$\ov{{}_\G \mathbf{X}^{\tau\otimes \tau_\alpha}_H \mc U_\alpha - \mc U_\alpha {}_\G \mathbf{X}^\tau_H} (g)(Y) = \d[t_1]\d[t_2] \ov s (g\exp(t_1 H)\exp (t_2 Y)) -  \ov s (g\exp(t_1Y)\exp (t_2H))$$
 and the latter equals $$\d \ov s(g\exp(t[H,Y])).$$ Since $[H,Y]=\alpha(H)Y$ for $Y\in \mf g_\alpha$ the claim follows.
\end{proof}

We can now prove the main result of this section.
\begin{proposition}\label{prop:horocycle}
 The horocycle operators can be extended continuously as linear operators to distributional  sections, i.e. $$\mc U_\alpha\colon \mc D'(\mc M,\G\backslash \mc V_\tau)\to \mc D'(\mc M, \G\backslash \mc V_{\tau\otimes \tau_\alpha}).$$ Moreover, for $\lambda\in\sigma_{\RT}({}_\G \mathbf{X}^\tau)$ the horocycle operator $\,\mc U_{-\alpha}$ maps $$\res_{{}_\G \mathbf{X}^\tau}(\lambda)\quad  \text{into} \quad \res_{{}_\G \mathbf{X}^{\tau\otimes \tau_{-\alpha}}}(\lambda + \alpha).$$
 In particular, each  $\lambda\in\sigma_{\RT}({}_\G \mathbf{X}^\tau)$ with $\Re \lambda\in \bigcap_{\alpha\in \Pi} \ov {{}_-\mf a^\ast} \setminus (\ov {{}_-\mf a^\ast}-\alpha)$  is a first band resonance and $\res_{{}_\G \mathbf{X}^\tau}(\lambda)=\res_{{}_\G \mathbf{X}^\tau}^0(\lambda)$ holds.
\end{proposition}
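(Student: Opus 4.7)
The proposition has three claims and my plan is to prove them in turn: (i) the existence of a distributional extension of $\mc U_\alpha$ preserving the wavefront constraint $E_u^\ast$, (ii) the resonance shift $\mc U_{-\alpha}\colon \res_{{}_\G \mathbf{X}^\tau}(\lambda) \to \res_{{}_\G \mathbf{X}^{\tau \otimes \tau_{-\alpha}}}(\lambda+\alpha)$, and (iii) the first-band conclusion under the stated spectral assumption on $\Re\lambda$.

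Claims (i) and (ii) are essentially formal. For (i), the definition $\mc U_\alpha = \widetilde{\pr}_\alpha\circ {}_\G\nabla^\C$ exhibits $\mc U_\alpha$ as a first-order differential operator with smooth coefficients between sections of $\G\backslash\mc V_\tau$ and $\G\backslash\mc V_{\tau\otimes\tau_\alpha}$, which extends by duality to distributional sections and whose action cannot enlarge wavefront sets, hence preserves $\WF(u)\subseteq E_u^\ast$. For (ii), I apply Lemma~\ref{la:commutationhoro} with $\alpha$ replaced by $-\alpha$:
\[
 {}_\G \mathbf{X}^{\tau \otimes \tau_{-\alpha}}_H \mc U_{-\alpha} = \mc U_{-\alpha}\,{}_\G \mathbf{X}^\tau_H - \alpha(H)\,\mc U_{-\alpha}.
\]
For $u\in\res_{{}_\G \mathbf{X}^\tau}(\lambda)$ this rearranges to $({}_\G \mathbf{X}^{\tau\otimes\tau_{-\alpha}}_H + (\lambda+\alpha)(H))\mc U_{-\alpha}u = 0$, which together with (i) gives $\mc U_{-\alpha}u \in \res_{{}_\G \mathbf{X}^{\tau\otimes\tau_{-\alpha}}}(\lambda+\alpha)$. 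Combining this with Proposition~\ref{prop:locationgeneral}, the hypothesis $\Re\lambda\notin\ov{{}_-\mf a^\ast}-\alpha$ for $\alpha\in\Pi$ --- equivalently $\Re(\lambda+\alpha)\notin\ov{{}_-\mf a^\ast}$ --- rules out $\lambda+\alpha$ as a Ruelle--Taylor resonance of ${}_\G \mathbf{X}^{\tau\otimes\tau_{-\alpha}}$, forcing $\mc U_{-\alpha}u = 0$ for every simple $\alpha\in\Pi$.

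The main obstacle is to upgrade this vanishing from simple roots $\alpha_i\in\Pi$ to all positive roots $\beta\in\Sigma^+$, which is what the first-band condition really demands. By \eqref{eq:horocycleformula}, $\mc U_{-\alpha_i}u = 0$ says that the $\G$-left-invariant, $M$-equivariant lift $\bar u$ of $u$ to a $V_\tau$-valued distribution on $G$ is annihilated by the left-invariant vector field $\tilde Y$ attached to any $Y\in\mf g_{-\alpha_i}$. A standard distributional argument --- the derivative of $\phi_t^\ast \bar u$ in $t$ equals $\phi_t^\ast(\tilde Y\bar u)$ and hence vanishes --- yields full right-invariance of $\bar u$ under each one-parameter subgroup $\exp(tY)$, $Y\in\mf g_{-\alpha_i}$. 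Since $\mf n_- = \bigoplus_{\alpha\in\Sigma^+}\mf g_{-\alpha}$ is generated as a Lie algebra by the simple negative root spaces $\mf g_{-\alpha_i}$ and $\exp\colon\mf n_-\to N_-$ is a diffeomorphism, the Baker-Campbell-Hausdorff formula implies that the subgroups $\exp(\mf g_{-\alpha_i})$ generate all of $N_-$. Consequently $\bar u$ is right $N_-$-invariant; differentiating infinitesimally along $\mf g_{-\beta}$ for any $\beta\in\Sigma^+$ gives $\mc U_{-\beta}u = 0$, which is precisely the condition $u\in\res^0_{{}_\G \mathbf{X}^\tau}(\lambda)$.
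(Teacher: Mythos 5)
Your proof of the first two claims (distributional extension and the resonance shift $\mc U_{-\alpha}\colon \res(\lambda)\to\res(\lambda+\alpha)$) is correct and coincides with the paper's: both rest on $\mc U_\alpha$ being a first-order differential operator and on Lemma~\ref{la:commutationhoro}.

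For the ``in particular'' part, however, you have created an ``obstacle'' that the paper does not have, and the detour you take to overcome it imports a nontrivial auxiliary fact that you do not justify. The paper's route is direct: write $\Re\lambda=\sum_{\alpha\in\Pi}c_\alpha\alpha$; the hypothesis forces $-1<c_\alpha\leq 0$ for every simple $\alpha$. Then for an \emph{arbitrary} $\beta=\sum_{\alpha\in\Pi}n_\alpha\alpha\in\Sigma^+$ (with $n_\alpha\in\N_0$ not all zero) one has $c_\alpha+n_\alpha>0$ for at least one $\alpha$, so $\Re(\lambda+\beta)\notin\ov{{}_-\mf a^\ast}$. Combined with Proposition~\ref{prop:locationgeneral} and claim (ii), this gives $\mc U_{-\beta}u=0$ for all $\beta\in\Sigma^+$ immediately, which is exactly the first-band condition.

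Your argument instead derives the vanishing only for simple roots and then passes to all of $\Sigma^+$ by asserting that $\mf n_-$ is generated as a Lie algebra by the simple restricted root spaces $\mf g_{-\alpha_i}$. That assertion is true, but it is a genuine structural fact about restricted root systems (the root spaces $\mf g_{-\beta}$ can be higher-dimensional and the surjectivity of brackets $[\mf g_{-\alpha},\mf g_{-\gamma}]\twoheadrightarrow\mf g_{-\alpha-\gamma}$ does require an argument), and you give neither a reference nor a proof. Without that, the step ``the subgroups $\exp(\mf g_{-\alpha_i})$ generate all of $N_-$'' is unsubstantiated. The fix is simple: drop the $N_-$-invariance argument entirely and observe, as above, that the stated condition on $\Re\lambda$ already implies $\Re(\lambda+\beta)\notin\ov{{}_-\mf a^\ast}$ for every positive root $\beta$, so the commutation argument applies uniformly.
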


\begin{figure}[ht]
 \centering
 \includegraphics[height=10cm, trim=80pt 100pt 50pt 150 pt,clip]{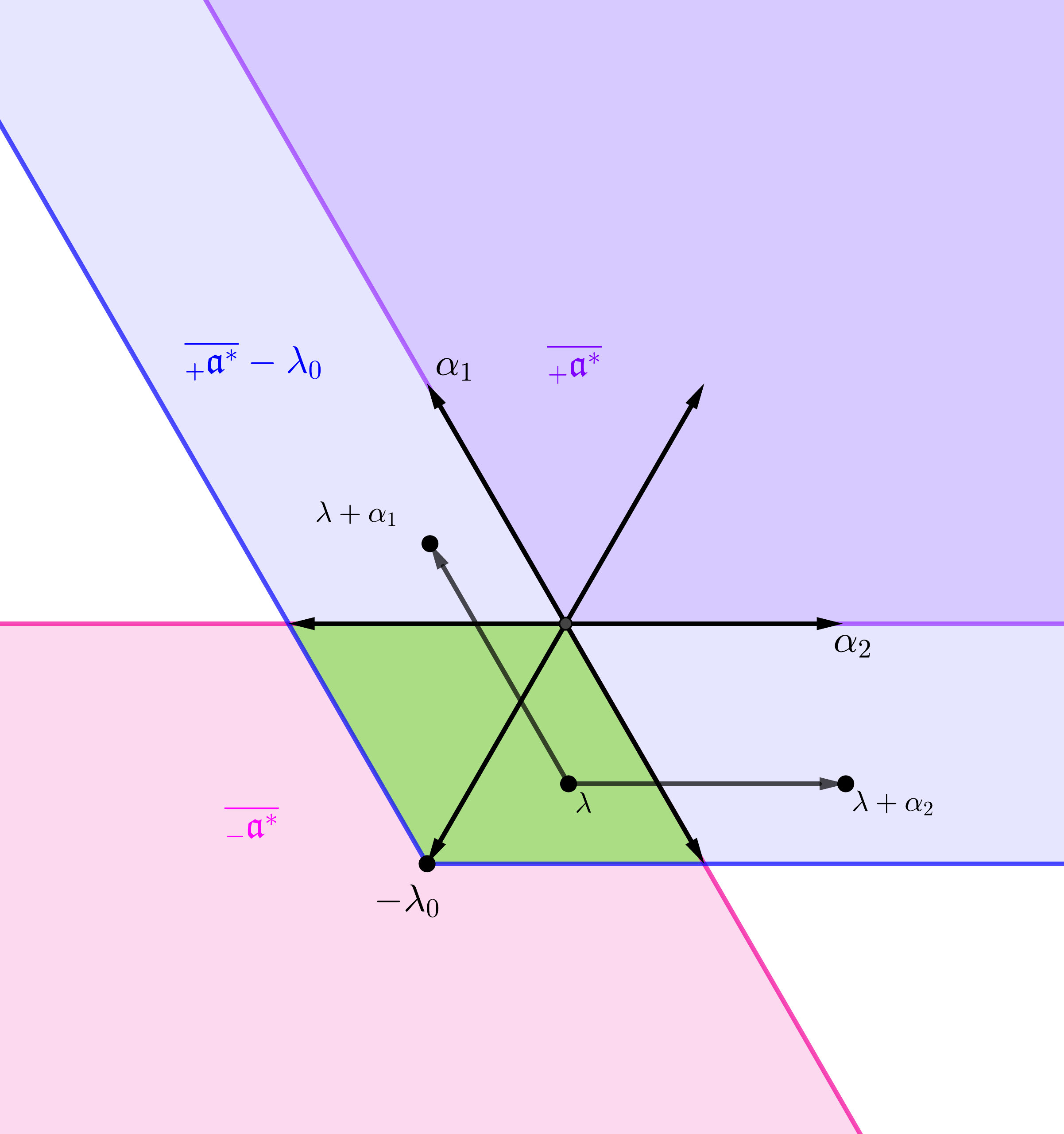}
 \caption{For $G=SL_3(\R)$ the green region depicts the real part of  the region where every resonance is a  first band resonance (see Proposition~\ref{prop:horocycle}).}
\end{figure}

\begin{proof}
 Since the horocycle operators are differential operators, we obtain a continuation to distributional sections and Lemma~\ref{la:commutationhoro} still holds. Let $u\in \res_{{}_\G \mathbf{X}^\tau}(\lambda)$, i.e. $u\in \mc D'(\mc M,\G\backslash \mc V_\tau)$ with $\WF(u)\subseteq E_u^\ast$ and ${}_\G \mathbf{X}^\tau_H u = -\lambda(H)u$. Since differential operators do not increase the wavefront set, we have $\WF(\mc U_{-\alpha}u)\subseteq E_u^\ast$. Furthermore, $${}_\G \mathbf{X}^{\tau\otimes \tau_{-\alpha}}_H \mc U_{-\alpha}u  = -\alpha(H)\mc U_{-\alpha} u + \mc U_{-\alpha} {}_\G \mathbf{X}^\tau_H u = -(\lambda + \alpha)(H) \mc U_{-\alpha} u$$ by Lemma~\ref{la:commutationhoro}. Hence $\mc U_{-\alpha}u \in \res_{{}_\G \mathbf{X}^{\tau\otimes \tau_{-\alpha}}}(\lambda + \alpha)$. 
 
 For the `in particular' part recall that $\res_{{}_\G \mathbf{X}^{\tau'}}(\lambda') = 0 $ for each unitary representation $\tau'$ of $M$ and $\Re(\lambda') \not\in \ov {{}_-\mf a^\ast}$ (see Proposition~\ref{prop:locationgeneral}) and $\res_{{}_\G \mathbf{X}^\tau}^0(\lambda) = \{u\in \res_{{}_\G \mathbf{X}^\tau} (\lambda)\mid \mc U_{-\alpha} u = 0 \,\, \forall \alpha \in \Sigma^+\}.$
 \end{proof}
 Note that $\bigcap_{\alpha\in \Pi} \ov {{}_-\mf a^\ast} \setminus (\ov {{}_-\mf a^\ast}-\alpha) = \ov {{}_-\mf a^\ast} \cap ({}_+ \mf a^\ast - \lambda_0)$,  where $\lambda_0 =\sum_{\alpha\in \Pi}\alpha$.
 Indeed, let $\lambda = \sum_{\alpha\in \Pi}c_\alpha \alpha \in \mf a^\ast.$ Then $\lambda\in \ov{{}_-\mf a^\ast}$ iff $c_\alpha\leq 0$ for all $\alpha\in \Pi$, $\lambda\in\ov {{}_-\mf a^\ast}-\alpha $ iff $c_\alpha \leq -1$ and $c_\beta\leq 0$ for all $\beta\in \Pi\setminus\{\alpha\}$, and $\lambda\in {}_+ \mf a^\ast $ iff $c_\alpha>0$ for all $\alpha\in \Pi$. Combining these statements implies the claim.

\subsection{First band resonant states and principal series representation}
In this section we  identify first band resonances states with certain $\G$-invariant vectors in a corresponding principal series representation. The proof follows the line of arguments given in \cite[Section 2]{KW17} in the rank one case.

By analogy to \cite[Definition 2.1]{KW17} we define 
\begin{align*}
	\mathcal R (\lambda)\coloneqq \{s\in \mc D'(G/M, \mc V_\tau) \mid (\mathbf{X}^\tau_H + \lambda(H))s = 0, \, \nabla_{\mf X_-}s=0 \, \forall \mf X_- \in C^\infty(G/M,G\times_{\Ad|_M} \mf n_-), H\in \mf a\}.
\end{align*}

\begin{lemma}\label{la:gammainv}
 The space $\res_{{}_\G \mathbf X^\tau}^0(\lambda)$ is isomorphic to the space of $\G$-invariants of $\mc R(\lambda)$, where the isomorphism is defined by considering $\G$-invariant sections as sections of the bundle $\G\backslash \mc V_\tau$.
\end{lemma}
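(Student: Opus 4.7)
The plan is as follows. Pull-back along the covering $\pi\colon G/M\to \mc M$ provides the standard identification of $\mc D'(\mc M,\G\backslash \mc V_\tau)$ with $\mc D'(G/M,\mc V_\tau)^\G$; I would take this as the candidate isomorphism and verify that it restricts to a bijection $\res^0_{{}_\G\mathbf X^\tau}(\lambda)\leftrightarrow \mc R(\lambda)^\G$ by matching the defining conditions term by term.

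First I would check that the eigenvalue equations $({}_\G\mathbf X^\tau_H+\lambda(H))u = 0$ and $(\mathbf X^\tau_H+\lambda(H))s=0$ correspond; this is immediate because both operators are expressed by the same formula on left-$\G$-invariant, right-$M$-equivariant functions on $G$. Using Equation~\eqref{eq:connectionformula} the same applies to the horocycle-type conditions. The only subtlety is that $\mc R(\lambda)$ imposes $\nabla_{\mf X_-}s=0$ for \emph{all} smooth $\mf X_-\in C^\infty(G/M,G\times_{\Ad|_M}\mf n_-)$, whereas on $\mc M$ the analogous condition runs only over $\mf X\in C^\infty(\mc M, E_u)$, whose pull-backs are only the $\G$-invariant such sections on $G/M$. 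I would resolve this by a pointwise argument: $\nabla_{\mf X_-}s$ at $p\in G/M$ depends only on $\mf X_-(p)$ by Equation~\eqref{eq:connectionformula}, and any value in the fiber at $p$ is attained by a $\G$-invariant section (obtained by pulling back a local section on $\mc M$ with the appropriate value at $\pi(p)$), so on $\G$-invariant $s$ the two conditions coincide.

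The main technical point is the wavefront set condition. The definition of $\res^0_{{}_\G\mathbf X^\tau}(\lambda)$ requires $\WF(u)\subseteq E_u^\ast$, while $\mc R(\lambda)$ does not impose an analogous condition a priori, so I would show that this is automatic from the differential equations. The principal symbols of the first-order operators $\mathbf X^\tau_H$ ($H\in\mf a$) and $\nabla_Y$ ($Y\in \mf n_-$) in the trivialization $T(G/M)\simeq G\times_{\Ad|_M}(\mf a\oplus\mf n\oplus\mf n_-)$ are the linear functionals $\xi\mapsto \xi(H)$ and $\xi\mapsto \xi(Y)$, whose common characteristic variety is exactly the annihilator of $\mf a\oplus\mf n_-$ in $\mf g^\ast$; this annihilator is precisely the fiber of $E_u^\ast$. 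Since $s$ solves homogeneous equations for all these operators simultaneously, the standard microlocal fact that the wavefront set of a common solution lies in the intersection of the characteristic varieties gives $\WF(s)\subseteq \pi^\ast E_u^\ast$, hence $\WF(u)\subseteq E_u^\ast$ on $\mc M$. The main obstacle is thus concentrated in this wavefront computation, but it reduces to a direct symbol calculation combined with this well-known regularity statement.
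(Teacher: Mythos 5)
Your proposal is correct and follows essentially the same route as the paper: the identification of distributions on $\mc M$ with $\G$-invariant distributions on $G/M$ is treated as the obvious part, and the substantive observation is that the wavefront set condition $\WF(s)\subseteq E_u^\ast$ is automatic because this conormal bundle is exactly the joint characteristic set of the operators $\mathbf X^\tau_H$ and $\nabla_{\mf X_-}$ — the paper makes precisely this point, citing \cite[Lemma~2.5]{KW17} for the microlocal regularity statement you invoke.
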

\begin{proof}
The only part to observe is that each $s\in \mc R(\lambda)$  automatically has $\WF(s)\subseteq G\times_{\Ad|_M} \mf n^\ast$. This holds because $G\times_{\Ad^\ast|_M} \mf n^\ast$ is the joint characteristic set of $\mathbf{X}^\tau_H$ and $\mf X_-$ (see \cite[Lemma 2.5]{KW17} for details).
\end{proof}

We define $\Phi^\lambda \in C^\infty(G/M)$ by $\Phi^\lambda(gM) \coloneqq e^{-\lambda (H_-(g))}$ where $H_-(g)\in \mf a$ is defined via $g=k_-(g)\exp(H_-(g))n_-(g))$ with $k_-(g)\in K$ and $n_-(g)\in N_-$.

\begin{lemma}[see {\cite[(2.3)]{KW17}}]
	$$ X_H\Phi^\lambda = -\lambda(H) \Phi^\lambda \quad \text{and} \quad \mf X_-\Phi^\lambda=0$$
	for all $H\in \mf a$ and $\mf X_-\in C^\infty(G/M,G\times_{\Ad|_M} \mf n_-)$. Furthermore,
	$$\Phi^\lambda (\g gM) = e^{-\lambda (H_-(\g k_-(g)))}\Phi^\lambda(gM)$$
	for all $\g,g \in G$.
\end{lemma}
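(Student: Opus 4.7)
The plan is to prove all three assertions by direct manipulation of the $KAN_-$ Iwasawa decomposition, exploiting the fact that $A$ normalizes $N_-$ so that the $A$-component of the decomposition is well-behaved under right multiplication by $A$ and $N_-$ and under left multiplication by arbitrary $g\in G$.

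For the first identity, I would start from the explicit formula $X_H \Phi^\lambda(gM) = \d \Phi^\lambda(g\exp(tH)M)$ (the vector field $X_H$ comes from the right $A$-action). Write $g = k_-(g) \exp(H_-(g)) n_-(g)$ and move $\exp(tH)$ past $n_-(g)$ using that $A$ normalizes $N_-$ (so $n_-(g)\exp(tH) = \exp(tH)\cdot[\exp(-tH) n_-(g) \exp(tH)]$ with the bracketed element still in $N_-$). The $A$-component of $g\exp(tH)$ is then $\exp(H_-(g) + tH)$, so $\Phi^\lambda(g\exp(tH)M) = e^{-\lambda(H_-(g))} e^{-t\lambda(H)}$ and differentiation at $t=0$ yields $-\lambda(H)\Phi^\lambda(gM)$. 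For the second identity, identify $\mf X_-$ with a right $M$-equivariant function $\ov{\mf X_-}\colon G \to \mf n_-$ so that $\mf X_- \Phi^\lambda(gM) = \d \Phi^\lambda(g\exp(t\ov{\mf X_-}(g))M)$; since $n_-(g)\exp(t\ov{\mf X_-}(g)) \in N_-$, the $K$ and $A$ parts of the Iwasawa decomposition of $g\exp(t\ov{\mf X_-}(g))$ are unchanged, so $\Phi^\lambda$ is constant along the curve and the derivative vanishes.

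For the third identity I would compute $H_-(\g g)$ by first applying the Iwasawa decomposition to $\g k_-(g)$, writing $\g k_-(g) = k_-(\g k_-(g)) \exp(H_-(\g k_-(g))) n_-(\g k_-(g))$, and then inserting this into $\g g = \g k_-(g) \exp(H_-(g)) n_-(g)$. Using again that $A$ normalizes $N_-$ to commute $n_-(\g k_-(g))$ past $\exp(H_-(g))$, the $A$-components add: $H_-(\g g) = H_-(\g k_-(g)) + H_-(g)$. Exponentiating $-\lambda$ of both sides produces the claimed cocycle identity.

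None of the three steps has a real obstacle beyond the careful book-keeping of Iwasawa components; the essential fact used throughout is simply that $\operatorname{Ad}(A)$ preserves $\mf n_-$, which makes the $KAN_-$ factorization behave additively in the $A$-exponent under the relevant multiplications. The only minor care point is in (2), to make sure I am using the correct definition of the action of a vector field on $G/M$ on a function, but this is precisely the formula underlying \eqref{eq:connectionformula}.
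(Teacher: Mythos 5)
Your proof is correct and follows essentially the same approach as the paper: compute each quantity from the opposite Iwasawa decomposition $g = k_-(g)\exp(H_-(g))n_-(g)$ and exploit that $A$ normalizes $N_-$ so that the $A$-exponent behaves additively. You spell out the normalization step slightly more explicitly than the paper (which just asserts $H_-(ge^{tH})=H_-(g)+tH$ and $H_-(k'a'n_-'an_-)=\log a'+\log a$), but the content and route are identical.
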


\begin{proof}
	We compute 
	\begin{align*}
		 X_H\Phi^\lambda(gM)  &= \d  e^{-\lambda (H_-(ge^{tH}))} =\d e^{-\lambda (H_-(g)+ tH)} = -\lambda(H) \Phi^\lambda(gM)
		\intertext{and}
		\mf X_-\Phi^\lambda(gM)  &= \d  e^{-\lambda (H_-(g\exp(t \ov{\mf X_-}(g))))} =\d e^{-\lambda H_-(g)} = 0.
	\end{align*}
	Let $g =kan_-$ and $\g k = k'a'n_-'$ be the opposite Iwasawa decompositions of $g$ and $\g k$, respectively. Then we have 
	\begin{equation*}
		H_-(\g g) = H_-(k'a'n_-'an_-) = \log a' + \log a = H_-(\g k_-(g)) + H_-(g)
	\end{equation*}
	and the second part of the lemma follows.
\end{proof}

The above lemma allows us to relate $\mc R(\lambda)$ to $\mc R(0)$.

\begin{lemma}[{\cite[Lemma 2.4]{KW17}}]\label{la:relationdifferentresonances}
	For each $\lambda \in \mf a_\C^\ast$ there is an isomorphism of topological vector spaces  (with the topology induced by $\mc D'(G/M;\mc V_\tau)$)
	\begin{align*}
	\mc R(\lambda) & \rightleftarrows \mc R(0)\\
	s &\mapsto \Phi^{-\lambda}s\\
	\Phi^\lambda s &\mapsfrom s.
	\end{align*}
\end{lemma}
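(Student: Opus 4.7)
The plan is to verify this directly using the previous lemma, which tells us how $\Phi^\lambda$ behaves under $X_H$ and $\mf X_-$, combined with the Leibniz rule for the admissible lift and the canonical connection. The map is multiplication by a smooth function, so all operations make sense on distributions.

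First I would check that the map $s \mapsto \Phi^{-\lambda}s$ actually sends $\mc R(\lambda)$ into $\mc R(0)$. Using the fact that $\mathbf X^\tau$ is an admissible lift (Equation~\eqref{eq:admissiblelift}) and the identity $X_H \Phi^{-\lambda} = \lambda(H)\Phi^{-\lambda}$ from the previous lemma, for $s\in \mc R(\lambda)$ we compute
\[
\mathbf X^\tau_H(\Phi^{-\lambda}s) = (X_H\Phi^{-\lambda})s + \Phi^{-\lambda}\mathbf X^\tau_H s = \lambda(H)\Phi^{-\lambda}s - \lambda(H)\Phi^{-\lambda}s = 0.
\]
Similarly, since $\nabla$ is a covariant derivative and $\mf X_-\Phi^{-\lambda}=0$, we get $\nabla_{\mf X_-}(\Phi^{-\lambda}s) = 0 + \Phi^{-\lambda}\nabla_{\mf X_-}s = 0$ for any $\mf X_-\in C^\infty(G/M,G\times_{\Ad|_M}\mf n_-)$. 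Thus $\Phi^{-\lambda}s \in \mc R(0)$. The opposite direction $s\mapsto \Phi^\lambda s$ from $\mc R(0)$ into $\mc R(\lambda)$ is identical, using $X_H\Phi^\lambda = -\lambda(H)\Phi^\lambda$.

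Next I would verify that the two maps are mutually inverse: this is immediate from $\Phi^\lambda\Phi^{-\lambda} = 1$, which follows from the definition $\Phi^\lambda(gM) = e^{-\lambda(H_-(g))}$. Finally, since $\Phi^{\pm\lambda}$ are smooth functions on $G/M$, multiplication by them is a continuous linear operator on $\mc D'(G/M,\mc V_\tau)$, so the bijection is a topological isomorphism when both sides are equipped with the subspace topology inherited from $\mc D'(G/M,\mc V_\tau)$.

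There is no real obstacle here; the only slightly delicate point is that $s$ is a distributional section rather than a smooth one, but $\Phi^{\pm\lambda}$ is smooth and the Leibniz rule for $\mathbf X^\tau_H$ and $\nabla_{\mf X_-}$ applied to a product of a smooth function and a distribution is standard. Everything reduces to the scalar identities of the previous lemma.
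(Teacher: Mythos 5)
Your proof is correct and follows essentially the same route as the paper's: verify both directions by applying the Leibniz rule for $\mathbf X^\tau_H$ and $\nabla_{\mf X_-}$ to $\Phi^{\pm\lambda}s$ and invoking the scalar identities from the preceding lemma. The paper phrases this slightly more compactly by computing with general parameters $\lambda_1,\lambda_2$ and then choosing them appropriately, and it leaves the mutual-inverse and continuity observations implicit, but the mathematical content is identical.
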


\begin{proof}
	Fix $\lambda_1,\lambda_2 \in \mf a_\C^\ast$ and let $s \in \mc R(\lambda_1)$.
	We compute
	\begin{align}
		\mathbf X_H (\Phi^{\lambda_2}s) =  X_H(\Phi^{\lambda_2})s + \Phi^{\lambda_2} \mathbf{X}_H (s) = -(\lambda_2 +\lambda_1)(H) \Phi^{\lambda_2}s
		\intertext{and}
		\nabla_{\mf X_-}(\Phi^{\lambda_2}s)= \mf X_- (\Phi^{\lambda_2})s+ \Phi^{\lambda_2}	\nabla_{\mf X_-}s=0.
	\end{align}
	The claim follows with an appropriate choice of $(\lambda_1,\lambda_2)$.
\end{proof}

 Recall from Section~\ref{sec:principalreps} that  $\mc V_\tau^{\mc B}$ denotes the restriction of $\mc V_\tau$ to $K/M\subseteq G/M$.
\begin{lemma}[{see \cite[Lemma 2.6 and Prop. 2.8]{KW17}}]\label{la:identificationresonancezero}
	Let $\iota\colon K/M\to G/M$ be the inclusion and $B\colon G/M \to K/M, gM\mapsto k_-(g)M$ where $g=k_-(g)e^{H_-(g)}n_-(g)\in KAN_-$ is the opposite Iwasawa decomposition. Then $B$ and $\iota$ induce well-defined continuous pullback maps 
	\begin{align}
		B^\ast\colon \mc D'(K/M;\mc V_\tau^{\mc B}) \to \mc R(0),  \quad \iota^\ast \colon \mc R(0) \to  \mc D'(K/M;\mc V_\tau^{\mc B})
	\end{align}
	which are mutually inverse $G$-equivariant isomorphisms of topological vector spaces each equipped with the left regular $G$-action.
\end{lemma}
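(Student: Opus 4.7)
I plan to unfold both sides using the opposite Iwasawa decomposition. The map $\psi\colon K/M \times AN_- \to G/M$, $(kM, an_-)\mapsto kan_-M$, is a diffeomorphism under which $B$ becomes the projection onto the first factor and $\iota$ the inclusion of the slice $K/M\times\{e\}$. The proof then splits into three pieces: continuity of $B^*$ as a pullback along a submersion, continuity of $\iota^*$ as a pullback along an embedding (requiring a wavefront check), and verification that the two maps are mutual inverses.

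For $B^*$, since the projection $K/M\times AN_- \to K/M$ is a submersion, $B^*$ extends by standard distribution theory to a sequentially continuous linear map $\mc D'(K/M;\mc V_\tau^{\mc B})\to \mc D'(G/M;\mc V_\tau)$. Identifying sections with right-$M$-equivariant $V_\tau$-valued functions on $G$, one has $\ov{B^*f}(g)=\ov f(k_-(g))$; the fact that $k_-$ is constant on right $AN_-$-orbits shows $\mathbf X^\tau_H B^*f=0$ for $H\in\mf a$ and $\nabla_{\mf X_-}B^*f=0$ for $\mf X_-\in C^\infty(G/M,G\times_{\Ad|_M}\mf n_-)$, so $B^*f\in\mc R(0)$.

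For $\iota^*$, pullback along the embedding $\iota$ requires the wavefront set of the distribution to avoid the conormal bundle $N^*(K/M)\subset T^*(G/M)|_{K/M}$. By the proof of Lemma~\ref{la:gammainv}, every $s\in\mc R(0)$ has $\WF(s)\subseteq G\times_{\Ad^*|_M}\mf n^*$, so $K$-homogeneity reduces disjointness to a linear-algebra check at the identity coset. Writing $\mf k/\mf m=\{Y+\theta Y:Y\in\mf n\}$ inside $\mf g/\mf m=\mf a\oplus\mf n\oplus\mf n_-$, with $\theta$ the Cartan involution exchanging $\mf n$ and $\mf n_-$, one verifies directly that any $\xi\in\mf n^*$ (i.e.\ vanishing on $\mf a\oplus\mf n_-$) that also vanishes on $\{Y+\theta Y\}$ must vanish on all of $\mf n$. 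H\"ormander's pullback theorem then yields a continuous map $\iota^*\colon\mc R(0)\to\mc D'(K/M;\mc V_\tau^{\mc B})$.

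That $\iota^*\circ B^*=\id$ is immediate from $B\circ\iota=\id_{K/M}$. For the converse $B^*\circ\iota^*=\id$, the conditions $\mathbf X^\tau_H s=0$ and $\nabla_{\mf X_-}s=0$ say that $\psi^*s$ is annihilated by every vector field along the $AN_-$-factor, and a standard foliated-constant/Schwartz-kernel argument forces $\psi^*s$ to equal the pullback of its restriction to $K/M\times\{e\}$, i.e.\ $s=B^*(\iota^*s)$. The $G$-equivariance is transparent on the $\mc R(0)$ side (left regular action on $G/M$) and transports to the rule $(g\cdot f)(kM)=f(k_-(g^{-1}k)M)$ on $\mc D'(K/M;\mc V_\tau^{\mc B})$ via $B^*$. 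The main obstacle is arranging the wavefront/foliation argument so that it works for the $M$-equivariant vector bundle $\mc V_\tau$ rather than for scalars, but this is patterned on the rank-one case in \cite{KW17} and reduces essentially to the linear-algebra computation at $eM$ flagged above.
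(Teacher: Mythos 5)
The paper itself gives no proof of this lemma; it defers entirely to \cite[Lemma 2.6 and Prop.~2.8]{KW17}, so there is no in-text argument to compare against. Your reconstruction follows what is certainly the intended strategy: $B$ is a submersion, $\iota$ is an embedding whose conormal bundle misses the a priori wavefront set of elements of $\mc R(0)$, and the invariance equations force an element of $\mc R(0)$ to be constant along the $AN_-$-fibers and hence pulled back from $K/M$. The linear-algebra step is correct: for $\xi$ vanishing on $\mf a\oplus\mf n_-$ and on $\mf k/\mf m=\{Y+\theta Y:Y\in\mf n\}$, one gets $\xi(Y)=\xi(Y+\theta Y)-\xi(\theta Y)=0$ for all $Y\in\mf n$, so $\xi=0$, and $K$-equivariance of both $G\times_{\Ad^*|_M}\mf n^*$ and $N^*(K/M)$ propagates this to all of $K/M$.

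There is, however, one genuine slip in the setup that you should repair. The map $\psi\colon K/M\times AN_-\to G/M$, $(kM,an_-)\mapsto kan_-M$, is \emph{not} well-defined: replacing $k$ by $km$ gives $kman_-M = ka(mn_-m^{-1})M$, and $M$ does in general act nontrivially on $N_-$ (e.g.\ for $SL_n(\R)$, $n\geq 3$, where $M$ consists of sign matrices acting nontrivially on the root spaces). The correct statement is that $K\times A\times N_-\to G$ is a diffeomorphism which, after quotienting by the twisted right $M$-action $(k,a,n_-)\cdot m=(km,a,m^{-1}n_-m)$, exhibits $G/M\cong K\times_M(AN_-)$ as an $AN_-$-fiber bundle over $K/M$, with $B$ the projection and $\iota$ the section through $e\in AN_-$ (which is $M$-fixed, so the section is well-defined). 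Your foliated-constant argument for $B^*\circ\iota^*=\id$ still works in this bundle (rather than product) setting, since local triviality is all that is needed; alternatively one can reduce to smooth $s$ using the density result \cite[Cor.~2.9]{KW17} which the paper itself invokes in the proofs of Propositions~\ref{prop:firstbandprinicipal} and \ref{prop:qcc}, and then the pointwise identity $s(gM)=s(k_-(g)M)$ is immediate.
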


 We can now prove the main result of this section.
\begin{proposition}\label{prop:firstbandprinicipal}
	With the longest Weyl group element $w_0$ (see Section~\ref{sec:notation}) we have an isomorphism $$\res_{{}_\G\mathbf X^\tau}^0(\lambda) \to {}^\G (H^{w_0 \tau,w_0 (\lambda+\rho)})^{-\infty}$$ where ${}^\G (H^{w_0 \tau,w_0( \lambda+\rho)})^{-\infty}\subseteq \mc D'(K/M;\mc V_\tau^{\mc B})$ denotes the $\G$-invariant distributional vectors in the principal series representation $\pi_{w_0 \tau,w_0(\lambda+\rho)}$. 
\end{proposition}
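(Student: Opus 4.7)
The plan is to chain together Lemmas~\ref{la:gammainv}, \ref{la:relationdifferentresonances} and \ref{la:identificationresonancezero} and then carefully track the resulting twisted $\G$-covariance to match it with the compact picture of the principal series. First, by Lemma~\ref{la:gammainv}, I identify $\res_{{}_\G\mathbf X^\tau}^0(\lambda)$ with the subspace $\mc R(\lambda)^\G$ of $\G$-invariant sections in $\mc R(\lambda)$. Then, by Lemma~\ref{la:relationdifferentresonances}, the multiplication map $s\mapsto s_0:=\Phi^{-\lambda}s$ furnishes a topological isomorphism $\mc R(\lambda)\to \mc R(0)$. This map is \emph{not} $\G$-equivariant because $\Phi^{-\lambda}$ fails to be left $\G$-invariant; rather, using the transformation formula $\Phi^\lambda(\g gM)=e^{-\lambda(H_-(\g k_-(g)))}\Phi^\lambda(gM)$, the $\G$-invariance of $s$ becomes a twisted cocycle condition on $s_0$.

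Next, applying the $G$-equivariant isomorphism $\iota^\ast\colon \mc R(0)\to \mc D'(K/M;\mc V_\tau^{\mc B})$ of Lemma~\ref{la:identificationresonancezero}, I obtain a distribution $F:=\iota^\ast s_0\in \mc D'(K/M;\mc V_\tau^{\mc B})$. Writing $s_0(g)=e^{-\lambda(H_-(g))}F(k_-(g))$ and evaluating the identity $s(\g k)=s(k)$ for $k\in K$ produces the covariance rule
\[
F(k_-(\g k)) \,=\, e^{\lambda(H_-(\g k))} F(k), \qquad \g\in\G, \ k\in K.
\]

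The final and technically most delicate step is to convert this $KAN_-$-type covariance into the standard $KAN$-type covariance of the compact picture of a principal series (Section~\ref{sec:principalreps}). For this I use a representative $w_0\in N_K(A)$ of the longest Weyl element and the intertwining between the two Iwasawa decompositions: writing $gw_0=(k_-(g)w_0)\cdot\exp(\Ad(w_0^{-1})H_-(g))\cdot(w_0^{-1}n_-(g)w_0)$, and noting that $w_0^{-1}N_-w_0=N$ and $\Ad(w_0^{-1})$ acts as the Weyl group element $w_0$ on $\mf a$, one obtains
\[
k_{KAN}(gw_0)=k_-(g)w_0, \qquad H(gw_0)=w_0^{-1}H_-(g).
\]
Setting $\tilde F(k):=F(kw_0^{-1})$ and substituting these identities into the displayed covariance rule, a direct computation yields
\[
\tilde F(k_{KAN}(\g k))=e^{(w_0\lambda)(H(\g k))}\,\tilde F(k),
\]
which is exactly the $\G$-invariance condition in the compact picture of $\pi_{\sigma,\mu}$ with $\mu+\rho=w_0\lambda$, i.e.\ $\mu=w_0\lambda-\rho=w_0(\lambda+\rho)$, where I used the standard identity $w_0\rho=-\rho$. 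The corresponding representation of $M$ is $w_0\tau$: since $w_0^{-1}Mw_0=M$, the right $M$-equivariance $F(km)=\tau(m)^{-1}F(k)$ translates into $\tilde F(km)=\tau(w_0 mw_0^{-1})^{-1}\tilde F(k)=(w_0\tau)(m)^{-1}\tilde F(k)$. The resulting map $s\mapsto \tilde F$ is continuous and admits an inverse built from the same formulas, so it provides the desired isomorphism onto ${}^\G(H^{w_0\tau,w_0(\lambda+\rho)})^{-\infty}$.

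The main obstacle is the bookkeeping of the $w_0$-twist: because $\mc R(\lambda)$ is built using right $N_-A$-invariance and is therefore naturally parameterized by the opposite Iwasawa decomposition, while the principal series of Section~\ref{sec:principalreps} is defined via $KAN$, the correct identification is forced to pass through the longest Weyl element, and both the parameter shift $\lambda\mapsto w_0(\lambda+\rho)$ and the representation twist $\tau\mapsto w_0\tau$ emerge from precisely this conversion. Once the twist is handled, the continuity of each intermediate map (already established in the three cited lemmas) ensures that everything extends to distributional vectors.
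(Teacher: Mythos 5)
Your proof is correct and follows the same chain of lemmas as the paper's argument (Lemma~\ref{la:gammainv}, Lemma~\ref{la:relationdifferentresonances}, Lemma~\ref{la:identificationresonancezero}, followed by a $w_0$-twist), but you organize it differently. The paper proves that the composite map $Q_\lambda = \iota^\ast\circ(\cdot\,\Phi^{-\lambda})\colon \mc R(\lambda)\to \mc D'(K/M;\mc V_\tau^{\mc B})$ is \emph{fully $G$-equivariant} once the target carries the principal series $\pi_{\tau,\lambda+\rho,\Sigma^-}$ built from the opposite positive system, and then intertwines $\pi_{\tau,\lambda+\rho,\Sigma^-}\cong\pi_{w_0\tau,w_0(\lambda+\rho)}$ via $If(k)=f(kw_0)$; passing to $\G$-invariants is then immediate from Lemma~\ref{la:gammainv}. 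You instead apply Lemma~\ref{la:gammainv} first and track only the $\G$-covariance through the chain, arriving at the $KAN_-$-cocycle $F(k_-(\g k))=e^{\lambda(H_-(\g k))}F(k)$, and then convert it to a $KAN$-cocycle by the same $w_0$-conjugation identities. Both routes land on $\mu=w_0(\lambda+\rho)$ and $\sigma=w_0\tau$, and your bookkeeping of the Weyl twist (including $w_0\rho=-\rho$ and $w_0^{-1}N_-w_0=N$) is correct. The paper's choice to establish $G$-equivariance is slightly cleaner and is also reused later (it makes the $G$-equivariance of the Poisson transform step in Proposition~\ref{prop:qcc} immediate); your version proves exactly what the statement asks for but does not supply that extra structural information. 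One small typo: you write $s_0(g)=e^{-\lambda(H_-(g))}F(k_-(g))$, which should be $s(g)=e^{-\lambda(H_-(g))}F(k_-(g))$ with $s_0(g)=F(k_-(g))$; the covariance relation you derive from it is nonetheless correct.
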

\begin{proof}

 Let $Q_\lambda\colon \mc R(\lambda) \stackrel{\cdot \Phi^{-\lambda}}{\to} \mc R(0) \stackrel{\iota^\ast}{\to} \mc D'(K/M;\mc V_\tau^{\mc B})$ be the composition of the isomorphisms from Lemma~\ref{la:identificationresonancezero} and Lemma~\ref{la:relationdifferentresonances}. We will first prove that if $\mc D'(K/M;\mc V_\tau^{\mc B})$ is equipped with the principal series representation $\pi_{\tau,\lambda+\rho,\Sigma^-}$ (the principal series representation with respect to the positive system $\Sigma^- =-\Sigma^+$), then $Q_\lambda$ is $G$-equivariant. Note that $\rho$ is given by the positive system $\Sigma^+$.
  
Again we will identify sections of the bundles of $G/M$ and $K/M$ with $M$-equivariant functions. By \cite[Corollary~2.9]{KW17} it suffices to consider smooth sections $s\in \mc R(\lambda)$. Then we compute
	\begin{align*}
		\pi_{\tau,\lambda+\rho,\Sigma^-}(g)(\iota^\ast(\Phi^{-\lambda}s))(k)&=e^{-(\lambda + \rho + \rho_-)(H_-(g\inv k))}\iota^\ast(\Phi^{-\lambda}s(k_-(g\inv k)))\\
		&=e^{-\lambda (H_-(g\inv k))} (\Phi^{-\lambda}s)(k_-(g\inv k)),
		\intertext{and since $\Phi^{-\lambda}s\in \mc R(0)$ we get }
		&=e^{-\lambda (H_-(g\inv k))} (\Phi^{-\lambda}s)(g\inv k) = s(g\inv k).
	\end{align*} 
	But on the other hand we have
	\begin{align*}
		\iota^\ast(\Phi^{-\lambda}(g s))(k)=\Phi^{-\lambda}(k)(g s)(k)= s(g\inv k),
	\end{align*}
	so the $G$-equvariance of $Q_\lambda$ is proved. We get a $G$-invariant isomorphism $\mc R(\lambda) \to (H^{w_0 \tau,w_0( \lambda+\rho)})^{-\infty}$ since $\pi_{\tau,\lambda,\Sigma^-}$ and $\pi_{w_0\tau,w_0\lambda}$ are equivalent. Indeed,  $If(k)\coloneqq f(kw_0)$ for $f\colon K\to V_\tau$ with $f(km)=\tau(m\inv)f(k)$ defines an intertwiner between the compact pictures. Applying now Lemma~\ref{la:gammainv} completes the proof.
\end{proof}

\subsection{Quantum-classical correspondence}\label{sec:quantumclassical}
In the previous section we identified the first band resonant states $\res_{{}_\G\mathbf X^\tau}^0(\lambda)$ with $\G$-invariant distributional vectors in the principal series $(H^{w_0 \tau,w_0 (\lambda+\rho)})^{-\infty}$.    If we  restrict ourselves to the scalar case $\tau = 1$, then the Poisson transform $\mc P_{-w_0(\lambda+\rho)}$ defines a map from  ${}^\G (H^{w_0 (\lambda+\rho)})^{-\infty}$ to ${}^\G E_{-w_0(\lambda+\rho)}$, as  $\mc P_{-w_0(\lambda+\rho)}$ provides a $G$-invariant map $ (H^{w_0 (\lambda+\rho)})^{-\infty}$ to $ E_{-w_0(\lambda+\rho)}$ (see Section~\ref{sec:Poissontransform}). Hence, we can identify eigendistributions of the classical motion with quantum states and we call this identification \emph{quantum-classical correspondence}. More precisely, we have the following result, which immediately gives Theorem~\ref{thm:qcc}.
\begin{proposition}\label{prop:qcc}
 If $\lambda\in \mf a_\C^\ast$ satisfies $\frac {2\langle \lambda+\rho,\alpha\rangle}{\langle\alpha,\alpha\rangle}\not \in -\N_{>0}$ for all $\alpha\in \Sigma^+$, then we have a bijection $$\res_{{}_\G\mathbf X}^0(\lambda) \to {}^\G E_{-w_0(\lambda+\rho)} = {}^\G E_{-(\lambda+\rho)}.$$ In particular, $\lambda \in \sigma_{\RT}^0({}_\G\mathbf X)$ if and only if ${}^\G E_{-(\lambda+\rho)}\neq 0$. Furthermore, the isomorphism is given by the push-forward $\pi_\ast$ of distributions along the canonical projection $\pi:\G\backslash G/M\to \G\backslash G/K$.
\end{proposition}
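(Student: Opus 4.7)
My plan is to chain Proposition~\ref{prop:firstbandprinicipal} (applied to the trivial representation $\tau=\mathbf 1$) with the Poisson transform from Section~\ref{sec:Poissontransform}. Proposition~\ref{prop:firstbandprinicipal} supplies an isomorphism $\res^0_{{}_\G\mathbf X}(\lambda)\cong{}^\G(H^{w_0(\lambda+\rho)})^{-\infty}$, and $\mc P_{-w_0(\lambda+\rho)}$ is $G$-equivariant from $(H^{w_0(\lambda+\rho)})^{-\infty}$ into $E_{-w_0(\lambda+\rho)}$, so composing produces a linear map $\res^0_{{}_\G\mathbf X}(\lambda)\to{}^\G E_{-w_0(\lambda+\rho)}$. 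Three things will then need checking: (i) the Poisson transform is a bijection of $\G$-invariant vectors under the hypothesis $\lambda\notin\mc A$; (ii) ${}^\G E_{-w_0(\lambda+\rho)}={}^\G E_{-(\lambda+\rho)}$; and (iii) the composition equals $\pi_\ast$.

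For (i), the KKMOOT criterion recalled in Section~\ref{sec:Poissontransform} requires $\frac{2\langle w_0(\lambda+\rho),\alpha\rangle}{\langle\alpha,\alpha\rangle}\notin\N_{>0}$ for every $\alpha\in\Sigma^+$. Setting $\beta=-w_0^{-1}\alpha$, which is a bijection of $\Sigma^+$ onto itself since $w_0\Sigma^+=-\Sigma^+$, and using $W$-invariance of the Killing form, will translate this into $-\frac{2\langle\lambda+\rho,\beta\rangle}{\langle\beta,\beta\rangle}\notin\N_{>0}$ for every $\beta\in\Sigma^+$, which is precisely the hypothesis $\lambda\notin\mc A$. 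Bijectivity on $\G$-invariant distribution vectors then follows from bijectivity on hyperfunction vectors, using that $\G\backslash G/K$ is compact so all $\G$-invariant joint eigenfunctions are smooth and bounded and thus have distributional (not merely hyperfunction) boundary values. For (ii), Section~\ref{sec:harishchandra} gives $\chi_\mu=\chi_{w\mu}$ for every $w\in W$, hence $E_\mu=E_{w\mu}$; taking $w=w_0$ and $\mu=-(\lambda+\rho)$ immediately yields $E_{-(\lambda+\rho)}=E_{-w_0(\lambda+\rho)}$.

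The core computation is (iii). Given $s\in\res^0_{{}_\G\mathbf X}(\lambda)$, Lemma~\ref{la:gammainv} produces a $\G$-invariant lift $\tilde s\in\mc R(\lambda)$, and the defining relations of $\mc R(\lambda)$ place $\tilde s$ in the induced picture $V^{\lambda+\rho,\Sigma^-}$ of the principal series with respect to the opposite Iwasawa decomposition. I will then verify that the intertwiner $I$ used in the proof of Proposition~\ref{prop:firstbandprinicipal}, given on the compact picture by $(If)(k)=f(kw_0)$, extends to the induced picture simply as $\tilde s\mapsto\tilde s'$ with $\tilde s'(g)=\tilde s(gw_0)\in V^{w_0(\lambda+\rho),\Sigma^+}$; this is a short direct computation using $w_0Nw_0^{-1}=N_-$ and $w_0\rho=-\rho$. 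Applying the induced-picture formula $\mc P_\mu f(xK)=\int_K f(xk)\,dk$ from Section~\ref{sec:Poissontransform} then gives
\[
\mc P_{-w_0(\lambda+\rho)}\tilde s'(xK)=\int_K\tilde s(xkw_0)\,dk=\int_K\tilde s(xk)\,dk,
\]
by right-invariance of Haar measure on $K$ under the representative $w_0\in N_K(A)\subset K$. The right-hand side is fiber integration along $\pi\colon G/M\to G/K$, i.e.\ $\pi_\ast\tilde s$, and descending to the $\G$-quotient identifies the composition with $\pi_\ast s$. The most delicate bookkeeping will be the interplay between the $\Sigma^+$- and $\Sigma^-$-conventions for the principal series and tracking the $\rho$-shifts carried by the $w_0$-intertwining; once these are pinned down, the matching of the Poisson integral with the fiber integral is a single change of variable.
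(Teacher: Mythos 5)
Your proposal is correct and follows essentially the same route as the paper: apply Proposition~\ref{prop:firstbandprinicipal} with $\tau=\mathbf{1}$, convert the KKMOOT condition via $\beta=-w_0^{-1}\alpha$ and $w_0\rho=-\rho$, and then identify the Poisson transform with the fiber integral by realizing $\tilde s\in\mc R(\lambda)$ in the induced picture and using unimodularity of $K$ to drop the shift by $w_0$. Two small points that the paper makes explicit and you gloss over: the passage from bijectivity on hyperfunction vectors to bijectivity on $\G$-invariant \emph{distribution} vectors rests on the moderate-growth boundary value characterization (the paper cites \cite[Remark~12.5]{vdBanSchl87}), and the pointwise computation $\tilde s'(g)=\tilde s(gw_0)$ is literally valid only for smooth sections, so one needs the density of $C^\infty\cap\mc R(\lambda)$ in $\mc R(\lambda)$ (the paper invokes \cite[Corollary~2.9]{KW17}) before descending to distributions.
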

\begin{proof}
 In view of Section~\ref{sec:Poissontransform} the Poisson transform is a bijection from $(H^{w_0(\lambda+\rho)})^{-\omega}\to E_{-\lambda-\rho}$. Restricted to $\G$-invariant distributional vectors it is still injective with image ${}^\G E_{-\lambda-\rho}$ since $\G$ is cocompact and therefore $\G$-invariant distributions have moderate growth (see \cite[Remark~12.5]{vdBanSchl87}). 
 
 It remains to show that the isomorphism is the push-forward along the canonical projection. To this end let $s\in \mc R(\lambda)$ be smooth and $p\colon G/M\to G/K$ the canonical projection. Then the isomorphism $\mc R(\lambda)\to (H^{w_0(\lambda+\rho)})^{-\infty}$ carries $s$ to $\tilde s\colon G\to \C, \tilde s(g)=s(gw_0)$. It follows that $$\mc P_{-w_0(\lambda+\rho)}\tilde s(gK)=\int _K \tilde s(gk)dk =\int_K s(gkw_0)dk=\int _K s(gk)dk$$ since $K$ is unimodular. On the other hand,  for $f\in C^\infty_c(G/K)$ we have $$(p_\ast s)(f)=s(f\circ p)=\int _{G/M} s(gM)f(gK)dgM = \int_{G/K}\left (\int_{K/M} s(gkM)dkM\right)f(gK)dgK$$
 if we normalize the Haar measure on $M$ and choose compatible invariant measures on $G/K$ and $K/M$.
 Hence, $p_\ast s= \mc P_{-w_0(\lambda+\rho)}\tilde s$ for $s\in \mc R(\lambda)\cap C^\infty(G/M)$. Using the density of smooth compactly supported functions in $\mc R(\lambda)$ \cite[Corollary~2.9]{KW17} we obtain the equality for the whole space $\mc R(\lambda)$. As before we now restrict to $\G$-invariant distributions identified with distributions on $\G\backslash G/M$ and $\G \backslash G/K$ to complete the proof.
\end{proof}

\section{Quantum spectrum}
In this section we analyze the quantum spectrum of the locally symmetric space $\G\backslash G /K$. Recall the definition of the joint eigenspace $$E_\lambda = \{f\in C^\infty (G/K) \mid Df = \chi_\lambda (D) f \quad \forall \, D\in \mathbb D(G/K)\}$$ for $\lambda \in \mf a_\C^\ast$. For the definition of $\chi_\lambda$ see Section~\ref{sec:notation}.
Since $D\in \mathbb D(G/K)$ is $G$-invariant, it descends to a differential operator ${}_\G D$ on the locally symmetric space $\G\backslash G/K$. Therefore, the left $\G$-invariant functions of $E_\lambda$ (denoted by ${}^\G E_\lambda$) can be identified with joint eigenfunctions on $\G\backslash G/K$ for each ${}_\G D$: $${}^\G E_\lambda = \{f\in C^\infty(\G\backslash G /K)\mid {}_\G D f = \chi_\lambda (D) f \quad \forall \, D\in \mathbb D(G/K)\}.$$ This leads to the following definition.

\begin{definition}\label{def:quantum spectrum}
 The \emph{quantum spectrum} of $\G\backslash G /K$ is defined as $$\sigma_Q \coloneqq \sigma_Q(\G\backslash G/K)\coloneqq \{\lambda\in \mf a_\C^\ast\mid {}^\G E_\lambda\neq 0\}.$$
\end{definition}
Using the quantum-classical correspondence and the Weyl law from \cite{dkv} we can now prove Theorem~\ref{thm:Weyl}.
\begin{proof}[Proof of Theorem~\ref{thm:Weyl}]
 From \cite[Theorem~8.9]{dkv} we have for each set $\Omega\subset \mf a^\ast$ as in Theorem~\ref{thm:Weyl}
 $$\sum_{\lambda\in \sigma_Q\cap i\mf a^\ast\!, \Im \lambda\in t \Omega} \dim ({}^\G E_\lambda) |W\lambda|\inv =\textup{Vol}(\G\backslash G/K) \left(2\pi \right)^{-d} \textup{Vol}(\Ad(K)\Omega) t^d +\mc O(t^{d-1}),$$ where $\textup{Vol}(\G\backslash G/K)$ is the volume of the compact Riemannian manifold $\G\backslash G/K$ with Riemannian structure induced by the Killing form and  $\textup{Vol}(\Ad(K)\Omega)$ is the volume of the set $\Ad(K)\Omega\subseteq \Ad(K)\mf a$ with respect to the Killing form restricted to $\Ad(K)\mf a$. 
 Replacing $\Omega$ by $\Omega\setminus \bigcup_{\alpha\in \Sigma ^+}\alpha^\perp$ we deduce $\sum_{\lambda\in \sigma_Q\cap i\mf a^\ast\!, \Im \lambda\in t \Omega\cap \bigcup \alpha^\perp} \dim ({}^\G E_\lambda) =\mc O(t  ^{d-1})$ since $\textup{Vol}(\Ad(K)\alpha^\perp)=0$. Therefore,
 $$\sum_{\lambda\in \sigma_Q\cap i\mf a^\ast\!, \Im \lambda\in t \Omega} \dim ({}^\G E_\lambda) =|W|\textup{Vol}(\G\backslash G/K) \left(2\pi \right)^{-d} \textup{Vol}(\Ad(K)\Omega) t^d +\mc O(t^{d-1})$$ since $W$ acts freely on the Weyl chambers. To complete the proof we observe that $\sigma_{\RT}({}_\G\mathbf X)\supseteq \sigma_{\RT}^0({}_\G\mathbf X)$ and $m(\lambda)\geq \dim(\res_{{}_\G \mathbf{X}}^0(\lambda))=\dim ({}^\G E_{-\lambda-\rho})$ for $\lambda \in i\mf a^\ast$.
\end{proof}

As $\chi_\lambda = \chi_{w\lambda}$ for $w\in W$ it is obvious that $\sigma_Q$ is $W$-invariant.
The following properties of $\sigma_Q$ were derived by Duistermaat-Kolk-Varadarajan \cite{dkv}. We include the proof for the convenience of the reader.
\begin{proposition}[{see \cite[Prop.~2.4,~Prop.~3.4,~Cor.~3.5]{dkv}}]\label{prop:quantumimpliespositive}
 If $\lambda\in \sigma_Q$, then the corresponding spherical function is positive semidefinite. Moreover, there is some $w\in W$ such that $w\lambda = -\ov \lambda$ and $\Re \lambda \in \conv (W\rho)$. In particular, $\langle\Re \lambda,\Im \lambda\rangle =0$ and $\norm{\Re \lambda}\leq \norm \rho$.
\end{proposition}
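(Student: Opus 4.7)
The plan is to produce a spherical irreducible unitary subrepresentation of $L^2(\G\backslash G)$ whose $\mathbb D(G/K)$-character is $\chi_\lambda$, and then to read off the three assertions from the dictionary of Section~\ref{sec:sphericalpos} together with two classical identities for the elementary spherical functions.

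First I would fix a non-zero $f\in {}^\G E_\lambda$. Since $\G\backslash G$ is compact, $f$ is a non-zero right $K$-invariant element of $L^2(\G\backslash G)$, and the right regular representation of $G$ on $L^2(\G\backslash G)$ decomposes as a Hilbert sum of irreducibles. Projecting $f$ onto the summands yields a non-zero right $K$-invariant vector $v$ in some irreducible unitary subrepresentation $\pi$, which is therefore spherical. Because $\mathbb D(G/K)$ commutes with the $G$-action and acts on $f$ by $\chi_\lambda$, it acts on $v$ by the same character. The normalized matrix coefficient $g\mapsto \langle \pi(g)v,v\rangle/\|v\|^2$ is positive semidefinite, bi-$K$-invariant, takes the value $1$ at the identity, and is a joint eigenfunction of $\mathbb D(G/K)$ with character $\chi_\lambda$; by uniqueness of such a function (Section~\ref{sec:harishchandra}) it equals $\phi_\mu$ for some $\mu\in W\lambda$, hence equals $\phi_\lambda$. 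This establishes positive semidefiniteness.

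For the identity $w\lambda=-\bar\lambda$ I would combine two standard facts about $\phi_\lambda$: the integral formula of Section~\ref{sec:Poissontransform} gives $\overline{\phi_\lambda}=\phi_{\bar\lambda}$ (since $\rho$ and $H(\cdot)$ are real), while the duality of principal series representations yields $\phi_\lambda(g\inv)=\phi_{-\lambda}(g)$. Positive semidefiniteness forces $\phi_\lambda(g\inv)=\overline{\phi_\lambda(g)}$, so $\phi_{-\lambda}=\phi_{\bar\lambda}$. Since $\phi_\mu=\phi_\nu$ iff $\mu\in W\nu$, we obtain $-\lambda=w'\bar\lambda$ for some $w'\in W$; applying complex conjugation and using that $W$ acts by real linear maps on $\mf a^*$ produces $w\in W$ with $w\lambda=-\bar\lambda$. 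For the inclusion $\Re\lambda\in \conv(W\rho)$ I would invoke Helgason--Johnson: positive semidefiniteness gives $|\phi_\lambda|\leq 1$, and bounded elementary spherical functions are characterized by exactly this condition.

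The remaining two assertions are purely algebraic consequences of $w\lambda=-\bar\lambda$. Writing $\lambda=\Re\lambda+i\,\Im\lambda$ with $\Re\lambda,\Im\lambda\in\mf a^*$, the equation separates into $w\Re\lambda=-\Re\lambda$ and $w\Im\lambda=\Im\lambda$; orthogonality of the $W$-action on $\mf a^*$ with respect to the Killing form then forces $\langle \Re\lambda,\Im\lambda\rangle=0$, and $\|\Re\lambda\|\leq \|\rho\|$ follows from $\Re\lambda\in\conv(W\rho)$ since every element of $W\rho$ has the same norm $\|\rho\|$. The most delicate step is the very first one: I need to carefully transport the character $\chi_\lambda$ from $f$ to the $K$-fixed vector of the chosen irreducible spherical subrepresentation of $L^2(\G\backslash G)$. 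Once that is set up, everything else reduces to standard manipulations with spherical functions.
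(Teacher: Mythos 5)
Your argument is correct and arrives at the same key facts the paper uses, but you take a detour that the paper avoids. You decompose $L^2(\G\backslash G)$ into irreducibles, project $f$ onto a spherical summand $\pi$ with $K$-fixed vector $v$, and then form the matrix coefficient $\langle\pi(\cdot)v,v\rangle/\|v\|^2$; the step you yourself flag as delicate—transporting the $\mathbb D(G/K)$-character through the projection—is created entirely by this detour. The paper instead takes $\Phi(g):=\langle R(g)u,u\rangle$ directly in $L^2(\G\backslash G)$: a matrix coefficient of any unitary representation is positive semidefinite, so irreducibility is not needed for that, and since $u$ itself is smooth, $\Phi$ is smooth and $D\Phi=\langle R(\cdot)Du,u\rangle=\chi_\lambda(D)\Phi$ follows immediately from left-invariance of $D$. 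Both routes then identify the resulting bi-$K$-invariant normalized joint eigenfunction with $\phi_\lambda$. The remainder of your proof (using $\overline{\phi_\lambda}=\phi_{\bar\lambda}$ and $\phi_\lambda(g^{-1})=\phi_{-\lambda}(g)$ to deduce $\phi_{-\bar\lambda}=\phi_\lambda$ and hence $w\lambda=-\bar\lambda$, then Helgason--Johnson for $\Re\lambda\in\conv(W\rho)$, then $W$-orthogonality for $\langle\Re\lambda,\Im\lambda\rangle=0$ and convexity for $\|\Re\lambda\|\le\|\rho\|$) matches the paper's argument essentially verbatim. So the approaches are substantively the same; the paper's version is just a bit leaner because it never needs to invoke the direct integral decomposition of $L^2(\G\backslash G)$ or worry about whether projections preserve smoothness and the differential-operator eigenvalue.
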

\begin{proof}
 Pick $u\in {}^\G E_{\lambda}$, regarded as a right $K$-invariant element of $L^2(\G\backslash G)$, normalized such that $\langle u,u\rangle_{L^2(\G\backslash G)} =1$. With the right regular representation $R$ on $L^2(\G\backslash G)$ define $\Phi(g)\coloneqq \langle R(g)u,u\rangle$. 
 Being a matrix coefficient the function $\Phi$ is positive semidefinite.
 We will show that $\Phi$ is the elementary spherical function $\phi_\lambda$. 
 By right $K$-invariance of $u$ and unitarity of $R$ we get that $\Phi$ is $K$-biinvariant. $\Phi(1)=1$ is obvious. Smoothness follows from the fact that $u$ is smooth. 
 Furthermore, $D \Phi (g) =\langle R(g)Du,u\rangle = \chi_{\lambda}(D) \Phi(g)$ by left invariance of $D$. 
 We conclude that $\Phi$ is the elementary spherical function for $\chi_{\lambda}$, i.e. $\Phi=\phi_\lambda$.
 
 Since $\phi_\lambda$ is positive semidefinite we have $\phi_\lambda (g)= \ov{\phi_\lambda(g\inv)}$ by definition of positive definiteness and $\ov{\phi_\lambda(g\inv) } =\phi_{-\ov\lambda}(g)$ by the integral representation (see Section~\ref{sec:sphericalpos}). Therefore $\phi_{\lambda}=\phi_{-\ov \lambda}$ implying that $w\lambda = -\ov \lambda$ for some $w\in W$. It easily follows that $$\langle \Re\lambda, \Im\lambda \rangle = \langle w \Re\lambda,w\Im\lambda\rangle = \langle -\Re\lambda ,\Im\lambda\rangle = 0.$$
 
 Moreover, $\phi_\lambda$ is bounded which holds iff $\Re\lambda \in \conv (W\rho)$ (see \cite[Ch. IV Theorem~8.1]{gaga}). Since $\{\mu \in \mf a^\ast\mid \norm{\mu}\leq \norm\rho\}$ is convex and contains $W\rho$, the last assertion follows.
\end{proof}

\begin{remark}
 In the rank one case Proposition~\ref{prop:quantumimpliespositive} implies for $\lambda\in \sigma_Q$ that $\lambda\in \mf a^\ast$ with $\|\lambda\|\leq\|\rho\|$ or $\lambda\in i\mf a^\ast$. In this particular case, this can be obtained not only from  Proposition~\ref{prop:quantumimpliespositive} but also from the positivity of the Laplacian on $\G\backslash G/K$. In the higher rank setting the algebra $\mathbb D(G/K)$ contains more operators, more precisely it is a polynomial algebra in $n$ variables. Using the properties of the Harish-Chandra isomorphism $\HC$ one can obtain that $-\ov\lambda\in W\lambda$ from the self/skew-adjointness of the operators in $\mathbb D(G/K)$.
\end{remark}

\begin{remark}\label{rmk:obstructions}
 Proposition~\ref{prop:quantumimpliespositive} implies the following obstructions for $\lambda \in \mf a_\C^\ast$ to be in $\sigma_Q$.
 \begin{enumerate}
  \item If $\Re \lambda = 0$, then we get no obstructions on $\Im \lambda$ since $w\lambda = -\ov\lambda$ is satisfied with $w=1$.
  \item If $\Re \lambda \neq 0$, then $\Im \lambda$ is singular, i.e.  $\Im \lambda\in \alpha^\perp$ for some $\alpha\in \Sigma$, since  $\Im \lambda$ non-singular implies $w=1$ as $W$ acts simply transitively on open Weyl chambers.
  \item If $\Re \lambda$ is regular, i.e. $\langle \Re\lambda,\alpha\rangle \neq 0$ for all $\alpha\in \Sigma$,  we denote by $\tilde w_0$ the unique Weyl group element mapping the Weyl chamber containing $\Re\lambda$ to its negative. Then 
   we have $\lambda \in \operatorname{Eig}_{-1}(\tilde w_0)+ i\operatorname{Eig}_{+1}(\tilde w_0)\subseteq \mf a_\C^\ast$ where  $\operatorname{Eig}_{\pm 1}$ denotes the eigenspace for $\pm 1$.
   If $-1$ is contained in $W$, then $\Im \lambda =0$. In particular, this is true in the rank one case but need not hold in general as is seen below. 
 \end{enumerate}
 Let us calculate $\dim \operatorname{Eig}_{+1}(w_0)=\dim \operatorname{Eig}_{+1}(\tilde w_0)$ in order to control the amount of freedom for $\Im\lambda$.  Let $d_\pm\coloneqq \dim \operatorname{Eig}_{\pm 1}(w_0)$. Then $n=d_++d_-$ and $\operatorname{Tr}(w_0)=d_+-d_- $. Choosing the basis $\Pi$ we observe $\operatorname{Tr}(w_0)= -\# \{\alpha\in \Pi\mid w_0\alpha = -\alpha\}\leq 0$. Thus, $d_\pm=\frac 12 (n\pm\operatorname{Tr}(w_0))$ so that $d_+ \leq \frac n 2$. We obtain the following traces and dimensions for the irreducible root systems from the classification.

 \begin{figure}[h]
		\centering
	\begin{tabular}{c|cccccccccc}
Type &$A_n$, $n$ even &$A_n$, $n$ odd&$B_n$&$D_n$, $n$ even &$D_n$, $n$ odd &$E_6$&$E_7$&$E_8$&$F_4$&$G_2$ \\\hline
 $-\operatorname{Tr} (w_0) $ & 0&1&$n$&$n$&$n-2$&2&7&8&4&2\\
 $d_+$& $n/2$&$(n-1)/2$&0&0&1&2&0&0&0&0
	\end{tabular}
\end{figure}

\end{remark}

\begin{example}
 For $G =SL_n(\R)$ an element $\lambda\in\mf a^\ast\simeq \mf a$ is regular iff the diagonal entries are pairwise distinct. For $\lambda =\operatorname{diag}(\lambda_1,\ldots,\lambda_n)\in \sigma_Q$ with $\Re\lambda\in  \ov{\mf a^\ast_+}$ satisfies $\Re \lambda_k = -\Re\lambda_{n-k}$ and  $\Im\lambda_k =\Im\lambda_{n-k}$ for all $k$ since the longest Weyl group element is the permutation $(1 \leftrightarrow n)(2\leftrightarrow n-1)\cdots$.
 
 More specifically, for $G=SL_3(\R)$ the only Weyl group elements with eigenvalue $-1$ are the reflections at hyperplanes perpendicular to the roots. Hence, $\lambda\in \sigma_Q$ implies $\Re\lambda\in[-1,1]\alpha$ and $\Im\lambda \in \alpha^\perp$ for some $\alpha\in \Sigma$ or $\lambda\in i\mf a^\ast$. The obstructions on $\lambda$ to be in $\sigma_Q$ described by Remark~\ref{rmk:obstructions} are less concrete and are visualized in Figure~\ref{fig:obstructions}.
\end{example}

\begin{figure}
 \centering
 \begin{minipage}[b]{0.4\linewidth}
  \includegraphics[width=\linewidth,trim = 130pt 160pt 130pt 150pt,clip]{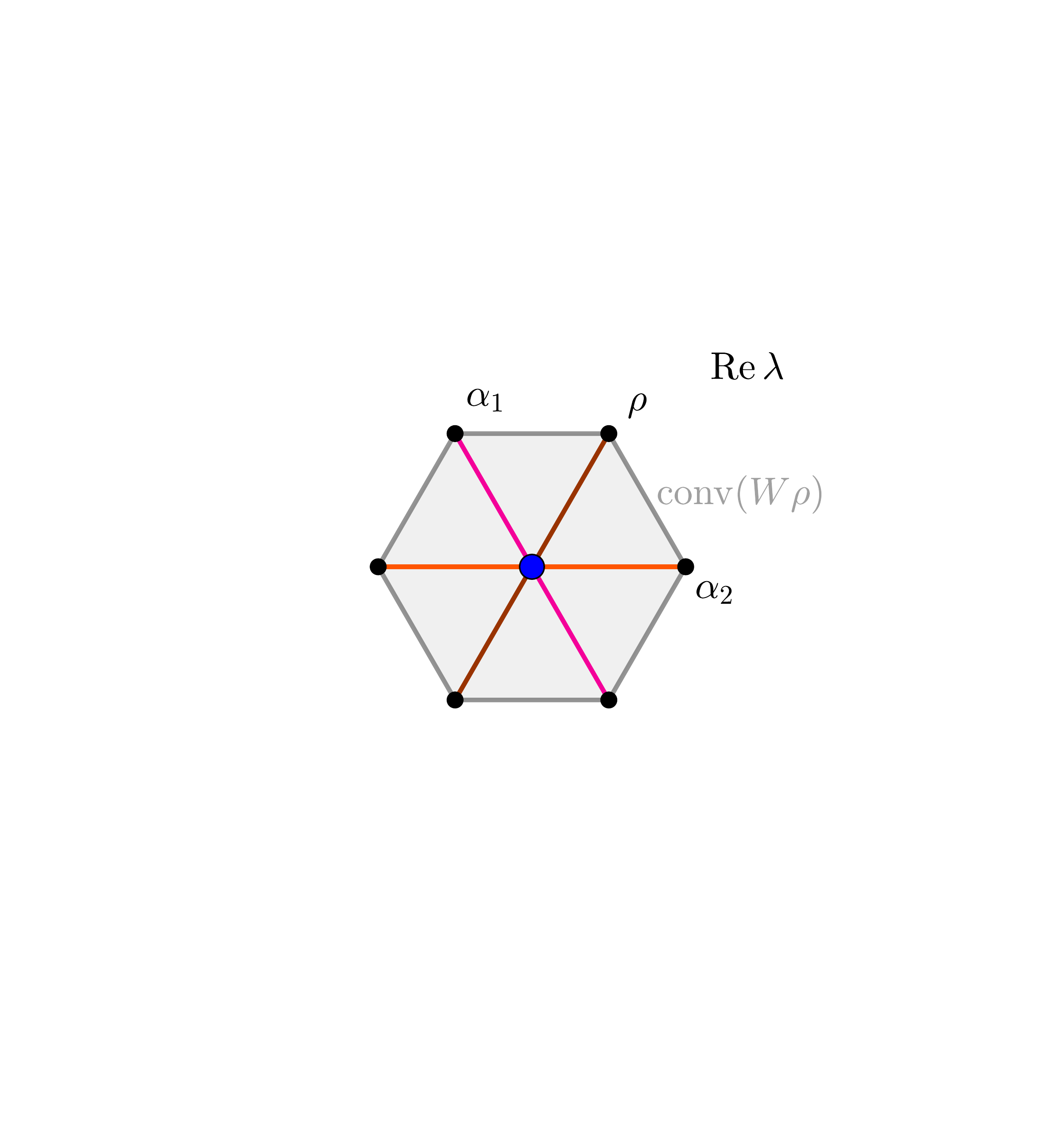}
\end{minipage}
\begin{minipage}[b]{0.4\linewidth}
  \includegraphics[width=\linewidth,trim = 130pt 160pt 130pt 150pt,clip]{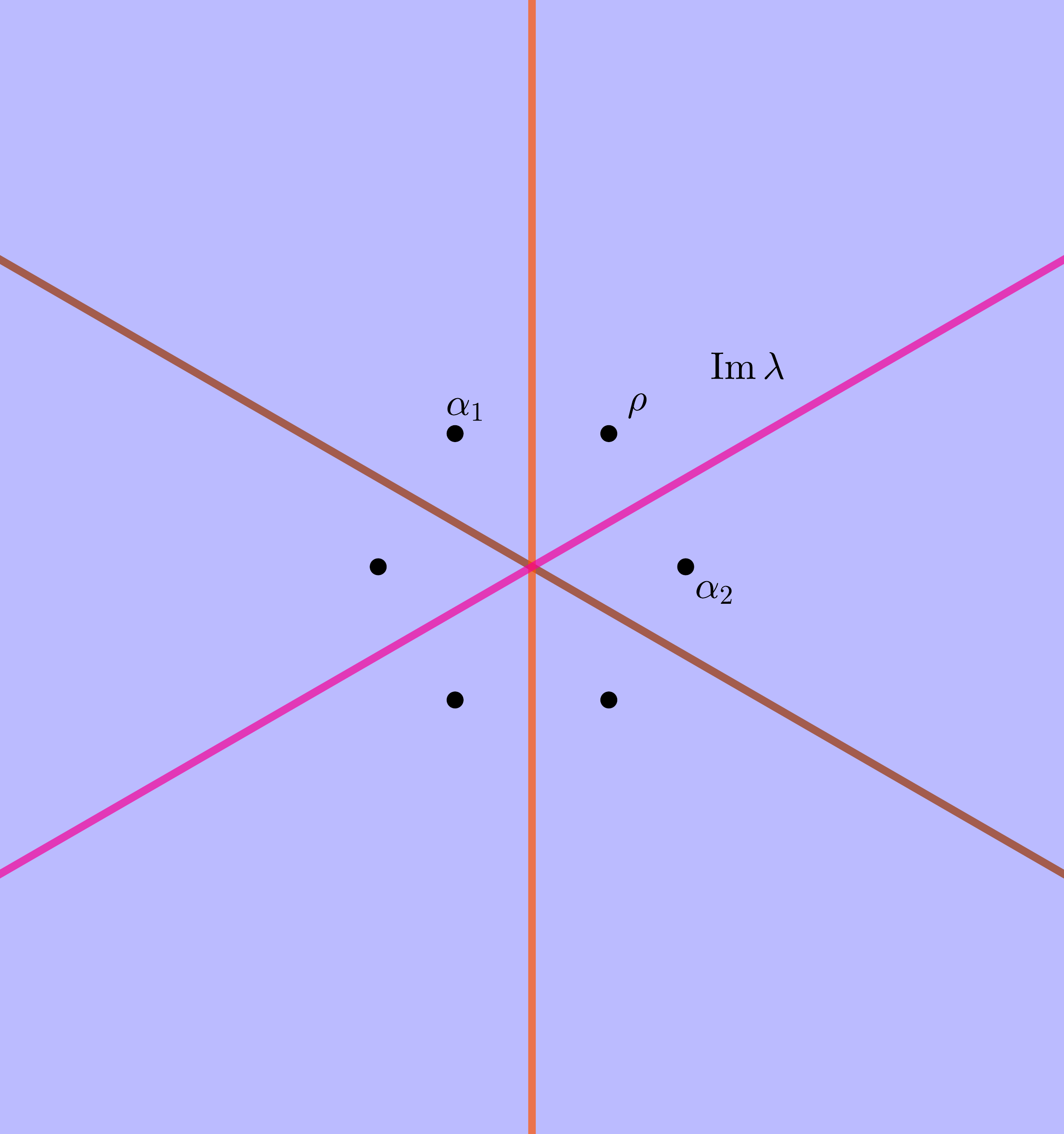}
\end{minipage}
 
\caption{Situation for $SL_3(\R)$ as obtained from Remark~\ref{rmk:obstructions}: If $\lambda\in \sigma_Q$ then $\Re\lambda$ is either equal to zero (blue dot in the left picture) or lies on one of the purple, orange or brown lines depicted on the left. Furthermore $\Im\lambda$ has to lie in the respective region depicted on the right, i.e. if $\Re\lambda=0$, $\Im\lambda$ can take any value (blue shaded plane), if $\Re\lambda$ lies on the orange line, then $\Im\lambda$ has to lie on the orange line and so on.}
\label{fig:obstructions}
\end{figure}

Let us formulate the condition that $\phi_\lambda$ is positive semidefinite in a different way. 

\begin{proposition}
 $\phi_\lambda$ is positive semidefinite if and only if the subrepresentation generated by the $K$-invariant vector in the  principal series representation $H^{w\lambda}$ is unitarizable and  irreducible for some $w\in W$. Equivalently, $H^{-w\ov \lambda}$ has a unitarizable irreducible spherical quotient.
\end{proposition}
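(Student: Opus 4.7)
The plan is to exploit the $1{:}1$ correspondence from Section~\ref{sec:sphericalpos} between positive semidefinite elementary spherical functions and irreducible unitary spherical representations, combined with Casselman's subrepresentation theorem which realizes every irreducible admissible representation as a subrepresentation of a non-unitary principal series.

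For the direction $(\Rightarrow)$, assume $\phi_\lambda$ is positive semidefinite. Section~\ref{sec:sphericalpos} provides an irreducible unitary spherical representation $\pi = \pi_{\phi_\lambda}$ with a normalized $K$-fixed vector $v_K$ such that $\phi_\lambda(g) = \langle \pi(g)v_K, v_K\rangle$. Since $\phi_\lambda$ is a joint eigenfunction of $\mathbb D(G/K)$ with eigencharacter $\chi_\lambda$, the representation $\pi$ has infinitesimal character $\chi_\lambda$. Applying Casselman's subrepresentation theorem, together with the fact that a spherical representation embeds into a spherical principal series, yields a $G$-equivariant embedding $\pi \hookrightarrow H^{w\lambda}$ for some $w \in W$. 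Since the $K$-invariants of $H^{w\lambda}$ form the line spanned by $1_K$, the image of $v_K$ is a nonzero multiple of $1_K$; hence the image of $\pi$ contains $1_K$ and, being irreducible, coincides with the subrepresentation of $H^{w\lambda}$ generated by $1_K$, which is therefore irreducible and unitarizable.

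For the converse, let $V_0 \subseteq H^{w\lambda}$ be the subrepresentation generated by $1_K$, and suppose $V_0$ is irreducible and unitarizable. Choosing a $G$-invariant inner product on $V_0$ normalized so that $\|1_K\| = 1$, the function $\psi(g) \coloneqq \langle \pi_{w\lambda}(g)1_K, 1_K\rangle$ is a diagonal matrix coefficient of a unitary representation and is therefore positive semidefinite. It is moreover smooth, bi-$K$-invariant with $\psi(1) = 1$, and satisfies $D\psi = \chi_{w\lambda}(D)\psi = \chi_\lambda(D)\psi$ for every $D \in \mathbb D(G/K)$. By the uniqueness of the elementary spherical function attached to a character (Section~\ref{sec:harishchandra}), $\psi = \phi_\lambda$, so $\phi_\lambda$ is positive semidefinite.

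The equivalence with quotients of $H^{-w\ov\lambda}$ follows from the $G$-invariant duality $H^{w\lambda} \times H^{-w\ov\lambda} \to \C$ given by the $K$-pairing $(f,h) \mapsto \int_K f(k)\ov{h(k)}\,dk$: subrepresentations of $H^{w\lambda}$ correspond to quotients of $H^{-w\ov\lambda}$ via annihilators, irreducibility and unitarizability pass to the contragredient, and the image of $1_K$ in the corresponding quotient is nonzero because $\langle 1_K, 1_K\rangle_K = 1 \neq 0$, so the quotient is spherical. The main obstacle is the appeal to Casselman's subrepresentation theorem (together with its spherical refinement that produces the Weyl element $w$); everything else reduces to the uniqueness of elementary spherical functions and standard duality for principal series.
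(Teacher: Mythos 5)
Your argument follows essentially the same route as the paper: Casselman's subrepresentation theorem to land in a spherical principal series, the $K$-fixed vector to force $\tau=1$, and passage between the positive semidefinite spherical function and the unitary structure on the subrepresentation generated by $1_K$, with the dual quotient statement obtained by the $K$-pairing. One point that deserves more care is the converse: ``unitarizable'' supplies a $G$-invariant Hermitian form on the $(\mf g,K)$-module of $K$-finite vectors of $V_0$, not automatically an inner product on all of $V_0$, and $\pi_{w\lambda}(g)1_K$ is not $K$-finite for general $g$; to legitimately write $\psi(g)=\langle\pi_{w\lambda}(g)1_K,1_K\rangle$ one should pass to the Hilbert completion of the $K$-finite vectors and then appeal to the fact that infinitesimally equivalent admissible representations have the same $K$-finite matrix coefficients (the paper cites Knapp, Corollary~8.8 for exactly this in both directions). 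With that clarification your proof is correct and matches the paper's.
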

\begin{proof}
 By Casselman's embedding theorem $\pi_{\phi_\lambda}$ is a subrepresentation of $H^{\tau,\nu}$ for some $\tau \in \widehat M$ and $\nu\in \mf a_\C^\ast$ (see e.g. \cite[Theorem~8.37]{Kna86}). More precisely, the $(\mf g, K)$-module of $K$-finite vectors are equivalent. Since the only principal series representations containing $K$-invariant vectors are the spherical ones, we obtain $\tau=1$. Since infinitesimally equivalent admissible representations of $G$ have the same set of $K$-finite matrix coefficients (see \cite[Corollary~8.8]{Kna86}), we conclude $\phi_\lambda =\phi_\nu$, i.e. $w\lambda = \nu$.
 
 Conversely assume that the subrepresentation generated by the $K$-invariant vector in the  principal series representation $H^{w\lambda}$ is unitarizable and  irreducible. Again by \cite[Corollary~8.8]{Kna86} the matrix coefficient $\phi_{w\lambda}=\phi_\lambda$ of $H^{w\lambda}$ is a matrix coefficient of the  unitary representation obtained by the unitary structure as well. Hence, $\phi_\lambda$ is positive semidefinite.
 Transition to the dual representation implies the second equivalence.
\end{proof}

\begin{remark}
 Although the unitary dual is classified for many groups, it is difficult to deduce which elementary spherical functions are positive semidefinite. This is due to the fact that most classifications are not obtained in terms of quotients of the spherical principal series but use different descriptions of admissible representations.  However, for rank one groups everything is classified (see \cite[{p.484}]{gaga}): If $\alpha$ denotes the unique reduced root in $\Sigma ^ +$, then $\phi_\lambda$ is positive semidefinite iff $\lambda\in i\mf a^\ast$ or $\lambda \in \mf a ^\ast$ and $|\langle \lambda ,\alpha\rangle|\leq \langle \rho,\alpha\rangle$ for $2\alpha\not\in \Sigma$ (i.e. in the real hyperbolic case) and $|\langle \lambda,\alpha\rangle|\leq (m_\alpha/2 +1)\langle \alpha,\alpha\rangle$ for $2\alpha\in \Sigma$ or $\lambda =\pm \rho$. 
\end{remark}

\begin{figure}[ht]
\centering
\begin{minipage}[b]{0.4\linewidth}
 \includegraphics[width=\linewidth,trim=0pt 180pt 0pt 0pt,clip]{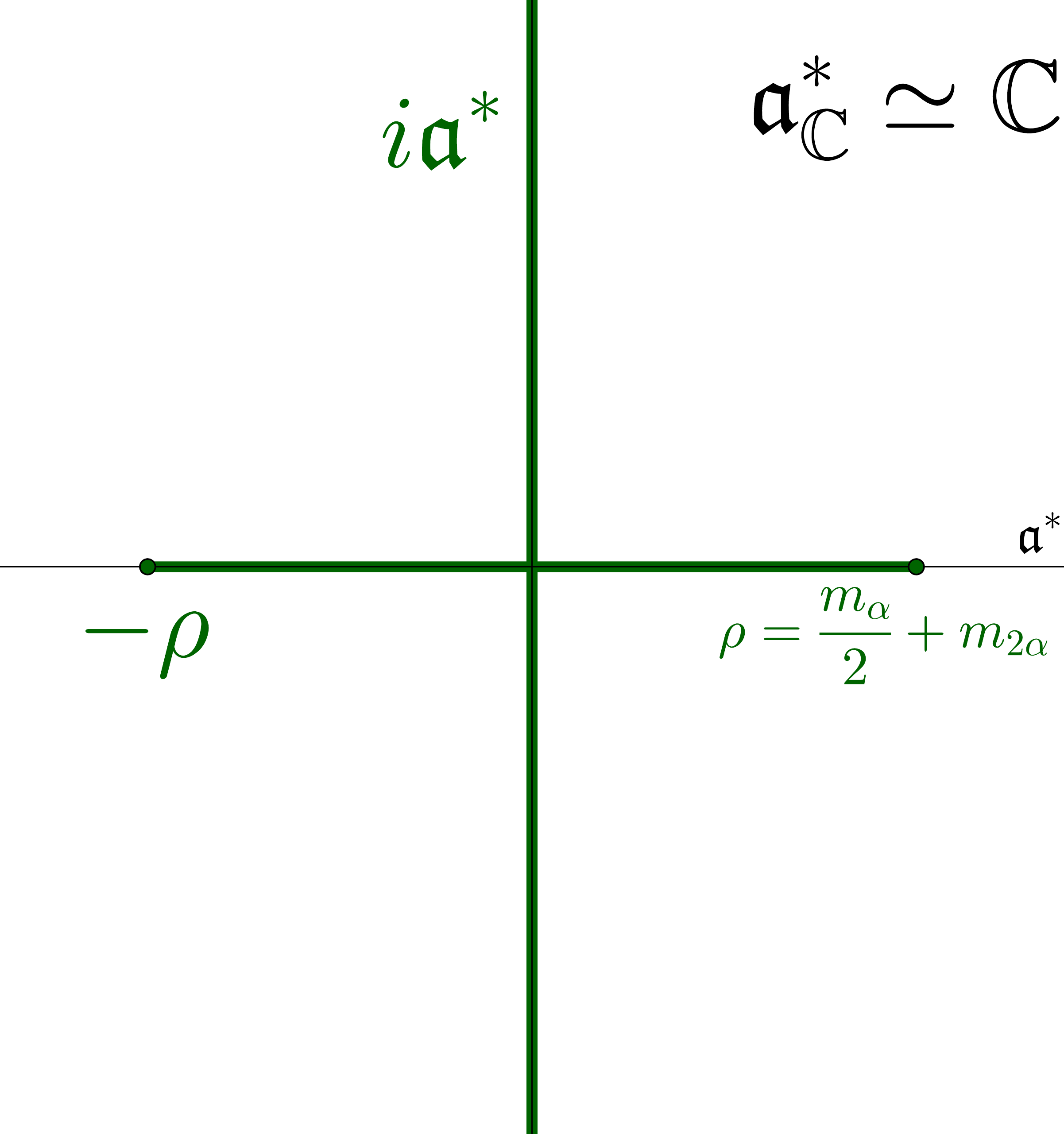}
\end{minipage}\qquad
\begin{minipage}[b]{0.4\linewidth}
 \includegraphics[width=\linewidth,trim=0pt 180pt 0pt 0pt,clip]{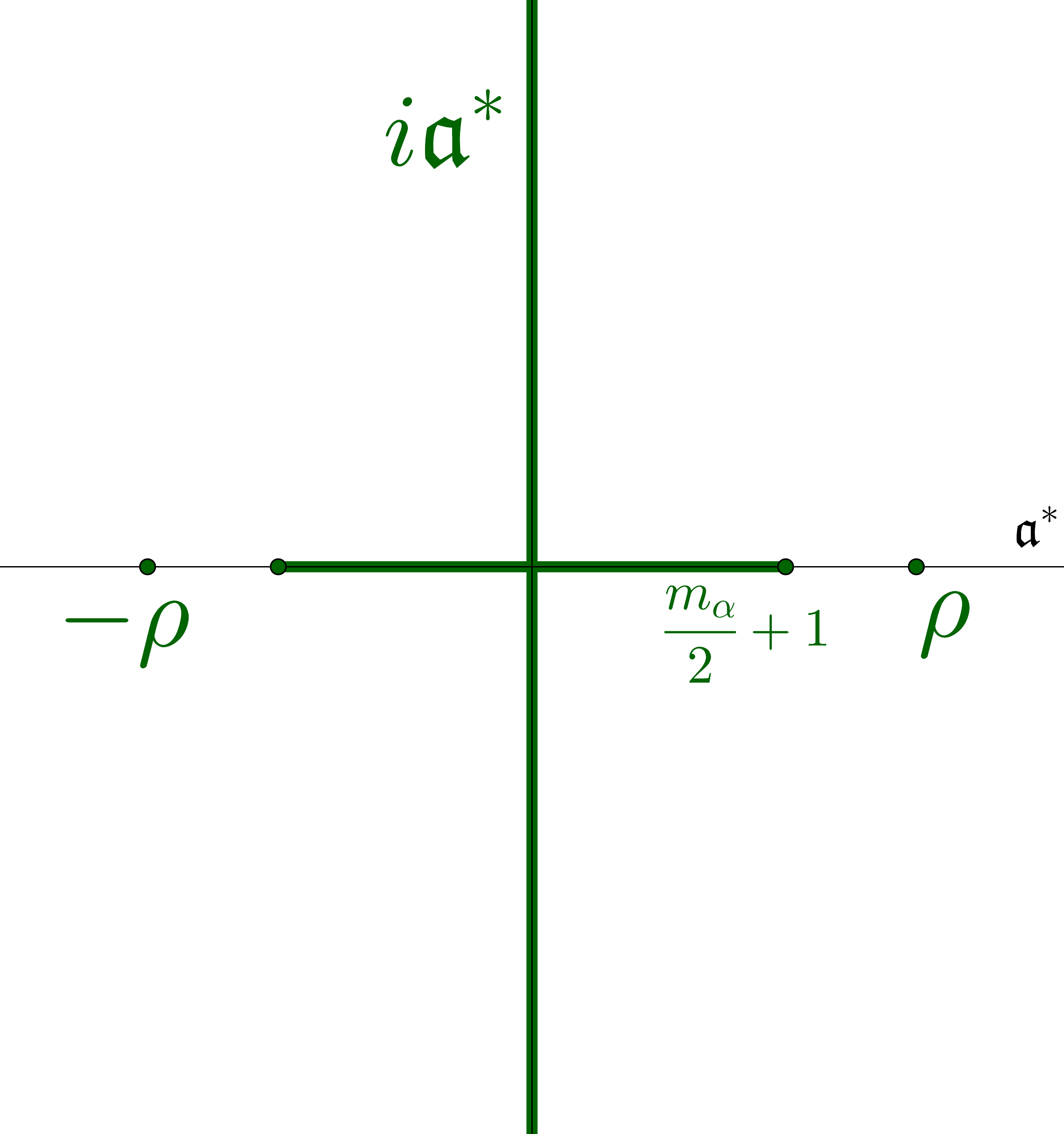}
\end{minipage}
\caption{Spherical dual in the rank one case. The picture on the left describes the real and complex hyperbolic case $m_{2\alpha}\leq 1$. The  picture on the right describes the quaternionic case $m_{2\alpha}\geq 2$. In the latter case note that   there is a spectral gap separating $\rho$.}
 
\end{figure}

\subsection{Property (T)}\label{sec:propT}
In this section we  review some facts about Kazhdan's Property (T) which will lead to a more precise description of the location of $\sigma_Q$. Recall that a locally compact group has \emph{Property (T)} iff the trivial representation is an isolated point in the unitary dual of the group with respect to the Fell topology (see \cite{propT} for a general reference). It is well known that each real simple Lie group of real rank $\geq 2$ has Property (T) (see \cite[Theorem~1.6.1]{propT}). Since the mapping $\lambda \mapsto \phi_\lambda$ is continuous and the correspondence between positive semidefinite elementary spherical functions and irreducible unitary spherical representations is a homeomorphism (see Section~\ref{sec:sphericalpos}), we obtain that in some neighbourhood of $\rho$ no elementary spherical function is positive semidefinite. We will use a more quantitative description introduced by  Oh \cite[Section~7.1]{oh2002}. Therefore, we denote by $p_K(G)$ the smallest real number such that the $K$-finite matrix coefficients of $\pi$ are  in $L^q(G)$ for any $q>p_K(G)$ and nontrivial $\pi\in \widehat G$.

\begin{remark}
 \begin{enumerate}
  \item Since each matrix coefficient  of $\pi \in \widehat G$ is bounded, it is contained in  $L^q$ for each $q>p$ if it is in $L^p$. Hence, $$p_K(G) =  \inf \{p \mid \text{all $K$-finite matrix coefficients of $\pi$ are in $L^p(G)$}  \quad \forall \pi \in \widehat G \setminus \{1\}\}.$$
  \item $p_K(G)\geq 2$.
  \item  By \cite{cowling} together with \cite{oh2002} we have $p_K(G)<\infty$ iff $G$ has Property (T).
 \end{enumerate}
\end{remark}
In many examples one knows the number $p_K(G)$  explicitly  or at least upper bounds.

\begin{example}[{see \cite[Section~7]{oh2002}}]

 \begin{enumerate}
 
  \item $p_K(SL_n(k))=2(n-1)$ for $n\geq 3$ and $k=\R,\C$.
  \item $p_K(Sp_{2n}(\R))= 2n$ for $n\geq 2$.
  \item $p_K(G)$ is bounded above by an explicit value for split classical groups of higher rank. 
 \end{enumerate}

\end{example}

We can now prove the following theorems.
\begin{theorem}\label{thm:locationquantum}
 Let $G$ be a non-compact real semisimple Lie group with finite center and $\G\leq G$ a discrete, cocompact, torsion-free subgroup. Then $$\Re \sigma_Q(\G\backslash G/K)\subseteq (1-2p_K(G)\inv)\conv(W\rho) \cup W\rho.$$
\end{theorem}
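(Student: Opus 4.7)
The plan is to combine four ingredients already established in the paper: Proposition~\ref{prop:quantumimpliespositive} (any $\lambda\in\sigma_Q$ makes $\phi_\lambda$ positive semidefinite); the 1:1 correspondence from Section~\ref{sec:sphericalpos} between positive semidefinite elementary spherical functions and the spherical unitary dual $\widehat G_{\text{sph}}$; the definition of $p_K(G)$, which forces all $K$-finite matrix coefficients of any nontrivial $\pi\in\widehat G$ to lie in $L^{p_K(G)+\varepsilon}(G)$ for every $\varepsilon>0$; and the $L^p$-characterization of elementary spherical functions from Proposition~\ref{prop:Lpequivalence}.

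First I would fix $\lambda\in\sigma_Q$ and, via the first two ingredients, attach to it an irreducible unitary spherical representation $\pi=\pi_{\phi_\lambda}$ whose normalized spherical matrix coefficient $g\mapsto\langle\pi(g)v_K,v_K\rangle$ is exactly $\phi_\lambda$. Then I would split into two cases according to whether $\pi$ is trivial or not. If $\pi$ is trivial, then $\phi_\lambda$ is the constant function $1$. Since $\phi_{-\rho}\equiv 1$ (visible from the integral formula $\phi_\mu(g)=\int_K e^{-(\mu+\rho)H(g^{-1}k)}\,dk$ at $\mu=-\rho$) and $\phi_\lambda$ determines $\lambda$ modulo $W$, this forces $\lambda\in W(-\rho)=W\rho$ (using $w_0\rho=-\rho$), so in particular $\Re\lambda\in W\rho$. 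If instead $\pi$ is nontrivial, then by the definition of $p_K(G)$ the matrix coefficient $\phi_\lambda$ lies in $L^{p_K(G)+\varepsilon}(G)$ for every $\varepsilon>0$, and Proposition~\ref{prop:Lpequivalence} immediately yields $\Re\lambda\in(1-2p_K(G)^{-1})\conv(W\rho)$. Combining both cases gives the claimed inclusion.

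There is no real technical obstacle; the only points requiring a moment of care are that the constant function $1$ is precisely $\phi_{-\rho}$ (so that it singles out the $W$-orbit $W\rho$ and not some other $\lambda$) and that the GNS representation associated to the constant function $1$ is indeed the trivial representation of $G$, both of which are standard facts from the theory of spherical functions on Gelfand pairs. In particular, unlike in Proposition~\ref{prop:Lpequivalence} itself, no new harmonic analysis input is needed here — the theorem is essentially a dictionary translation of the notion of $p_K(G)$ into a statement about $\Re\sigma_Q$.
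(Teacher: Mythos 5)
Your proposal matches the paper's own proof exactly: both proceed by attaching to $\lambda\in\sigma_Q$ the irreducible unitary spherical representation $\pi_{\phi_\lambda}$ via Proposition~\ref{prop:quantumimpliespositive} and Section~\ref{sec:sphericalpos}, then split into the trivial case (giving $\phi_\lambda\equiv 1$, hence $\lambda\in W\rho$) and the nontrivial case (where the definition of $p_K(G)$ combined with Proposition~\ref{prop:Lpequivalence} gives $\Re\lambda\in(1-2p_K(G)^{-1})\conv(W\rho)$). Your extra remark that the constant function is $\phi_{-\rho}$ and that $W(-\rho)=W\rho$ is a slightly more explicit version of the paper's terser "the latter case occurs iff $\phi_\lambda\equiv 1$, i.e.\ $\lambda\in W\rho$," but the argument is the same.
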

\begin{proof}
 Let $\lambda\in \sigma_Q(\G\backslash G/K)$. Then $\phi_\lambda$ is a matrix coefficient of the irreducible unitary representation $\pi_{\phi_\lambda}$. By the definition of $p_K(G)$ we have $\phi_\lambda\in L^{p_K(G)+\epsilon}(G)$ for all $\epsilon>0$ or $\pi_{\phi_\lambda}$ is the trivial representation. By Proposition~\ref{prop:Lpequivalence} we get $\Re\lambda\in (1-2p_K(G)\inv)\conv(W\rho)$ in the first case. The latter case occurs iff $\phi_\lambda\equiv 1$, i.e. $\lambda\in W\rho$. 
\end{proof}

\begin{theorem}\label{thm:gapquantum}
  Let $G$ be a non-compact real semisimple Lie group with finite center and $\G\leq G$ a discrete, cocompact, torsion-free subgroup. Then there is a neighborhood $\mc G$ of $\rho$ in $\mf a^\ast$ such that $$\sigma_Q(\G\backslash G/K)\cap (\mc G\times i\mf a^\ast)=\{\rho\}.$$
\end{theorem}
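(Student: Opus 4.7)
My plan is to combine Theorem~\ref{thm:locationquantum} with the spectral gap of the scalar Laplacian on the compact manifold $\G\backslash G/K$, splitting by whether $G$ has Property~(T). When $G$ has Property~(T), $p_K(G)<\infty$ and Theorem~\ref{thm:locationquantum} is essentially already the result: since every element of $W\rho$ has norm $\|\rho\|$, the point $\rho$ is an extreme point of $\conv(W\rho)$, and in particular $\rho$ is not contained in the strictly shrunken polytope $(1-2p_K(G)^{-1})\conv(W\rho)$. I would take $\mc G$ to be any open neighborhood of $\rho$ in $\mf a^*$ disjoint from this polytope and from the finite set $W\rho\setminus\{\rho\}$. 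Any $\lambda\in\sigma_Q$ with $\Re\lambda\in\mc G$ cannot fall into the first case of Theorem~\ref{thm:locationquantum}, and the proof of that theorem actually shows its second case constrains all of $\lambda$ (not just its real part) to lie in $W\rho$; combined with $\Re\lambda\in\mc G$ this pins $\lambda=\rho$. Since $p_K(G)$ depends only on $G$, the neighborhood $\mc G$ is uniform in $\G$.

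For the general case, Theorem~\ref{thm:locationquantum} can become vacuous and I would instead use that ${}_\G\Delta$ has discrete spectrum on the compact manifold $\G\backslash G/K$ with $0$ a simple isolated eigenvalue of gap $\mu_1>0$. For $\lambda\in\sigma_Q\setminus\{\rho\}$ the associated joint eigenfunction is $L^2$-orthogonal to the constants, so $\chi_\lambda(\Delta)\geq\mu_1$. Since $\HC(\Delta)(\lambda)=\|\rho\|^2-\langle\lambda,\lambda\rangle$ and $\langle\Re\lambda,\Im\lambda\rangle=0$ by Proposition~\ref{prop:quantumimpliespositive}, this gives
\[
\|\Re\lambda\|^2-\|\Im\lambda\|^2\leq\|\rho\|^2-\mu_1.
\]
Moreover, for $\Re\lambda$ sufficiently close to the regular element $\rho$, the unique Weyl element $w$ with $w\Re\lambda=-\Re\lambda$ is $w_0$, so Proposition~\ref{prop:quantumimpliespositive} forces $\Im\lambda\in\operatorname{Eig}_{+1}(w_0)$. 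Whenever $w_0=-\id$ on $\mf a^*$, which covers in particular all cases where every simple factor of $G$ is either of real rank one or of an irreducible type with $w_0=-\id$ (so $B_n,C_n,D_{2n},E_7,E_8,F_4,G_2$), this eigenspace vanishes, $\Im\lambda=0$, and the displayed inequality forces $\|\Re\lambda\|^2\leq\|\rho\|^2-\mu_1$ strictly, giving the desired $\G$-dependent~$\mc G$.

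The main obstacle is the \emph{mixed} case in which $G$ lacks Property~(T) (so has a rank-one factor of type $SO(n,1)$ or $SU(n,1)$, making $p_K(G)=\infty$) but also has a simple factor of type $A_n$ ($n\geq 2$), $D_{2n+1}$, or $E_6$ (so that $w_0\neq-\id$ on the corresponding summand). Here I would run Theorem~\ref{thm:locationquantum} factorwise on the higher-rank simple factors, each of which individually enjoys Property~(T) by Kazhdan's theorem, while using the Laplace-gap argument above on the rank-one factors. The delicate step, which I expect to require the most care, is that $\G$ need not respect the product decomposition of $G$; the cleanest way to reconcile this is probably to work with Fell-topological isolation of the trivial representation inside the support of $L^2(\G\backslash G)$ in $\widehat{G}_{\text{sph}}$. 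That isolation follows from the Laplace gap by the standard differentiation argument (an almost $G$-invariant $K$-spherical vector in $L^2_0(\G\backslash G/K)$ would have Rayleigh quotient $\langle\Delta v,v\rangle/\|v\|^2$ tending to zero, contradicting $\chi_\lambda(\Delta)\geq\mu_1$) and can then be translated via the homeomorphism of Section~\ref{sec:sphericalpos} into the required neighborhood $\mc G$ in $\mf a^*$.
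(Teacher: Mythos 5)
Your analysis of the Property~(T) case and the case $w_0=-\id$ is correct, and the Laplace-gap argument in the latter case is a pleasant quantitative alternative to the paper's ``discreteness of $\sigma_Q$'' appeal: from $\chi_\lambda(\Delta)=\|\rho\|^2-\langle\lambda,\lambda\rangle\geq\mu_1$, $\langle\Re\lambda,\Im\lambda\rangle=0$, and $\Im\lambda=0$ you get an explicit $\G$-dependent shrinkage of the sphere $\|\Re\lambda\|=\|\rho\|$ near $\rho$. However, the proposed treatment of the mixed case has a genuine gap, and it is the one you yourself flag. Fell-isolation of the trivial representation in $\operatorname{supp} L^2(\G\backslash G)$ cannot by itself produce a \emph{tube} neighborhood $\mc G\times i\mf a^*$: the homeomorphism of Section~\ref{sec:sphericalpos} transports the Fell topology to convergence of $\phi_\lambda$ on compacta, and a sequence $\lambda_n$ with $\Re\lambda_n\to\rho$ but $\|\Im\lambda_n\|\to\infty$ does \emph{not} have $\phi_{\lambda_n}\to 1$ locally uniformly, so Fell-isolation imposes no constraint on it. You must first kill $\Im\lambda$; Fell-isolation is only useful afterwards (and then the Laplace gap already suffices).

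The good news is that your worry motivating the Fell detour — that $\G$ need not respect the product decomposition $G\cong G_1\times\cdots\times G_l$ — does not actually arise. The only input from $\G$ in this part of the argument is Proposition~\ref{prop:quantumimpliespositive}, namely that $\lambda\in\sigma_Q$ forces $\phi_\lambda$ to be positive semidefinite on $G$ (and $w\lambda=-\bar\lambda$, $\Re\lambda\in\conv(W\rho)$). From there on everything is a statement about the \emph{group} $G$: $\phi_\lambda=\prod_i\phi_{\lambda_i}$ and restriction to each factor $G_i\hookrightarrow G$ shows $\phi_{\lambda_i}$ is positive semidefinite on $G_i$, hence a matrix coefficient of an irreducible spherical unitary representation of $G_i$. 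The $L^p$-argument of Theorem~\ref{thm:locationquantum} (Proposition~\ref{prop:Lpequivalence} plus $p_K(G_i)<\infty$) then applies factor by factor, with no reference to any lattice in $G_i$, and pins $\lambda_i=\rho_i$ (not just its real part) for each higher-rank factor near $\rho_i$. Combined with $\Im\lambda_i=0$ on the rank-one factors (from $w_0^{(i)}=-\id$ and regularity of $\Re\lambda_i$), one gets $\Im\lambda=0$ globally, and your Laplace-gap estimate finishes. This factor-wise reduction is exactly what the paper's proof does; your instinct to run the argument factorwise is right, you just need to observe that the positive-semidefiniteness input is $\G$-free, rather than reach for Fell-isolation, which is the wrong tool here.
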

\begin{proof}
 Without loss of generality we assume that $G$ has trivial center, otherwise replace $G$ by $G/Z(G)$. Then $G$ is a product of  simple Lie groups $G_1,\ldots,G_l$ such that $G_1,\ldots,G_k$, $k\leq l$, are of rank one. With the obvious notation let $\lambda=(\lambda_1,\ldots,\lambda_l)\in (\mf a_1)_\C^\ast\oplus\cdots\oplus (\mf  a_l)_\C^\ast$ be in $\sigma_Q$. By Proposition~\ref{prop:locationgeneral} we have $w\lambda=-\ov \lambda$ for some $w\in W$. Hence $\lambda_i\in \mf a_i^\ast$ are real for $i\leq k$ if $\Re \lambda_i\neq 0$ and      $\Re \lambda_i\in  (1-2p_K(G_i)\inv)\conv(W_i\rho_i) \cup W_i\rho_i$ for $i>k$ by Theorem~\ref{thm:locationquantum}. Since $G_i$, $i>k$, have Property (T) we conclude that there is a neighborhood $U$ of $\rho$ in $\mf a ^\ast$ such that 
 $$\sigma_Q\cap(U\times i\mf a^\ast) \subseteq \mf a_1  ^\ast \times \cdots \times \mf a_k ^\ast\times \{\rho_{k+1}\}\times\cdots\times\{\rho_l\}.$$
 Discreteness of $\sigma_Q$ implies  the theorem.
\end{proof}

\section{Main Theorem}
In this section we present the main theorem of the article and deduce Theorem~\ref{thm:gap} from it.
\begin{theorem}\label{thm:main}
Let $G$ be a non-compact real semisimple Lie group with finite center and $\G\leq G$ a discrete, cocompact, torsion-free subgroup.
 Define $\mc A\coloneqq \{\lambda\in \mf a_\C^\ast\mid \frac {2\langle \lambda+\rho,\alpha\rangle}{\langle\alpha,\alpha\rangle} \in -\N_{>0}$ for some $\alpha\in \Sigma^+\}$, $\mc B\coloneqq \{\lambda\in \mf a_\C^\ast\mid w\lambda =-\ov \lambda $ for some $w\in W\}$, and $\mc F\coloneqq \{\lambda\in \mf a^\ast \mid  \lambda +\alpha \not\in  \ov{{}_-\mf a ^\ast} $ for all $\alpha \in \Pi\}$.
 Then we have the following inclusions $$\sigma_{\RT}({}_\G \mathbf{X})\cap (\mc F\times i\mf a^\ast) \subseteq \sigma_{\RT}^0( {}_\G \mathbf X) $$ and 
 \begin{align*}
  \sigma_{\RT}^0( {}_\G \mathbf X) \cap (\mf a_\C^\ast\setminus\mc A) &\subseteq -\sigma_Q(\G\backslash G/K) - \rho \\&\subseteq \mc B \cap (((1-2p_K(G)\inv)\conv(W\rho) \cup W\rho) +i\mf a^\ast)-\rho .
 \end{align*}

 \end{theorem}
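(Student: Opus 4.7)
The plan is to prove each of the three inclusions in Theorem~\ref{thm:main} separately, each being essentially a direct consequence of results already assembled earlier in the paper.

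For the first inclusion $\sigma_{\RT}({}_\G\mathbf{X})\cap (\mc F\times i\mf a^\ast)\subseteq \sigma_{\RT}^0({}_\G\mathbf X)$, I would combine Proposition~\ref{prop:locationgeneral} with Proposition~\ref{prop:horocycle}. Concretely, I would first rewrite the set $\mc F$ in the form $\bigcap_{\alpha\in\Pi}\bigl(\mf a^\ast\setminus (\ov{{}_-\mf a^\ast}-\alpha)\bigr)$, which is immediate from its definition. Then, since Proposition~\ref{prop:locationgeneral} forces $\Re\lambda\in \ov{{}_-\mf a^\ast}$ for any resonance, the intersection with $\mc F\times i\mf a^\ast$ lies precisely in the region $\bigcap_{\alpha\in\Pi}\ov{{}_-\mf a^\ast}\setminus (\ov{{}_-\mf a^\ast}-\alpha)$ where Proposition~\ref{prop:horocycle} guarantees that every resonance is a first-band resonance. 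This is the cleanest of the three inclusions.

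For the second inclusion, I would invoke the quantum-classical correspondence of Proposition~\ref{prop:qcc} directly: the hypothesis $\lambda\not\in\mc A$ is exactly the condition ensuring that the Poisson transform is a bijection in the form needed there, so that $\lambda\in \sigma_{\RT}^0({}_\G\mathbf X)$ yields $\res^0_{{}_\G\mathbf X}(\lambda)\neq 0$, hence ${}^\G E_{-(\lambda+\rho)}\neq 0$, and thus $-(\lambda+\rho)\in\sigma_Q(\G\backslash G/K)$, which rearranges to $\lambda\in -\sigma_Q(\G\backslash G/K)-\rho$.

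For the third inclusion, I would combine the two structural results about $\sigma_Q$ obtained in the quantum spectrum section. Proposition~\ref{prop:quantumimpliespositive} produces, for any $\mu\in\sigma_Q$, a Weyl element $w$ with $w\mu=-\ov\mu$, and Theorem~\ref{thm:locationquantum} constrains $\Re\mu\in (1-2p_K(G)^{-1})\conv(W\rho)\cup W\rho$. A small bookkeeping check that $\mu\mapsto -\mu$ preserves membership in $\mc B$ (if $w\mu=-\ov\mu$ then $w(-\mu)=-\ov{(-\mu)}$) then converts these constraints on $\sigma_Q$ into the claimed constraints on $-\sigma_Q-\rho$: the $\mc B$-condition is invariant under negation and under the shift by $\rho\in \mf a^\ast$, while the real-part constraint passes through after negating.

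None of the steps should present a real obstacle since the deep input (microlocal control of resonances, the horocycle commutation, the Poisson isomorphism of Kashiwara--Kowata--Minemura--Okamoto--Oshima--Tanaka, the Duistermaat--Kolk--Varadarajan analysis, and the Cowling--Oh $L^p$-bounds via Proposition~\ref{prop:Lpequivalence}) has already been packaged into the cited statements. The only mildly non-routine point is the set-theoretic identification $\mc F\cap \ov{{}_-\mf a^\ast}=\bigcap_{\alpha\in\Pi}\ov{{}_-\mf a^\ast}\setminus(\ov{{}_-\mf a^\ast}-\alpha)$ in the first step, whose elementary verification is already carried out in the remark following Proposition~\ref{prop:horocycle}; invoking that identity is what allows the clean statement in terms of $\mc F$.
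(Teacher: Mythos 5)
Your proof is correct and follows essentially the same route as the paper, whose proof is the one-liner ``immediate from Propositions~\ref{prop:horocycle}, \ref{prop:qcc}, \ref{prop:quantumimpliespositive} and Theorem~\ref{thm:locationquantum}''; you have simply spelled out the three inclusions, and the additional invocation of Proposition~\ref{prop:locationgeneral} is what the paper implicitly relies on to place $\Re\lambda$ in $\ov{{}_-\mf a^\ast}$ before applying Proposition~\ref{prop:horocycle}. Two small slips worth noting: the identity $\mc F\cap\ov{{}_-\mf a^\ast}=\bigcap_{\alpha\in\Pi}\bigl(\ov{{}_-\mf a^\ast}\setminus(\ov{{}_-\mf a^\ast}-\alpha)\bigr)$ is immediate from the definition of $\mc F$ (the remark after Proposition~\ref{prop:horocycle} proves a \emph{different} reformulation, in terms of ${}_+\mf a^\ast-\lambda_0$), and $\mc B$ is not in fact invariant under the shift by $\rho$ — but that invariance is not needed, since the $-\rho$ translate appears on both sides of the third inclusion and cancels, leaving only the (correct) invariance of $\mc B$ and of $\conv(W\rho)\cup W\rho$ under $\lambda\mapsto-\lambda$ via $w_0$.
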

\begin{proof}
 This is immediate from Propositions \ref{prop:horocycle}, \ref{prop:qcc}, and \ref{prop:quantumimpliespositive} and Theorem \ref{thm:locationquantum}.
\end{proof}

\begin{proof}[Proof of Theorem~\ref{thm:gap}]
 It follows from Theorem~\ref{thm:main} that the neighborhood can be chosen as $(\mf a_+ ^\ast-\rho)\cap \mc F\cap (-\mc G - \rho)$ where $\mc G$ is obtained by Theorem~\ref{thm:gapquantum}. If $G$ has Property (T), then $p_K(G)$ is finite and $\mc G$ can be replaced by the complement of the $\G$-independent set $(1-2p_K(G)\inv)\conv(W\rho)$.
\end{proof}

\begin{figure}[ht]
\centering
 \includegraphics[height=10cm,trim = 0pt 30pt  0pt 70pt,clip]{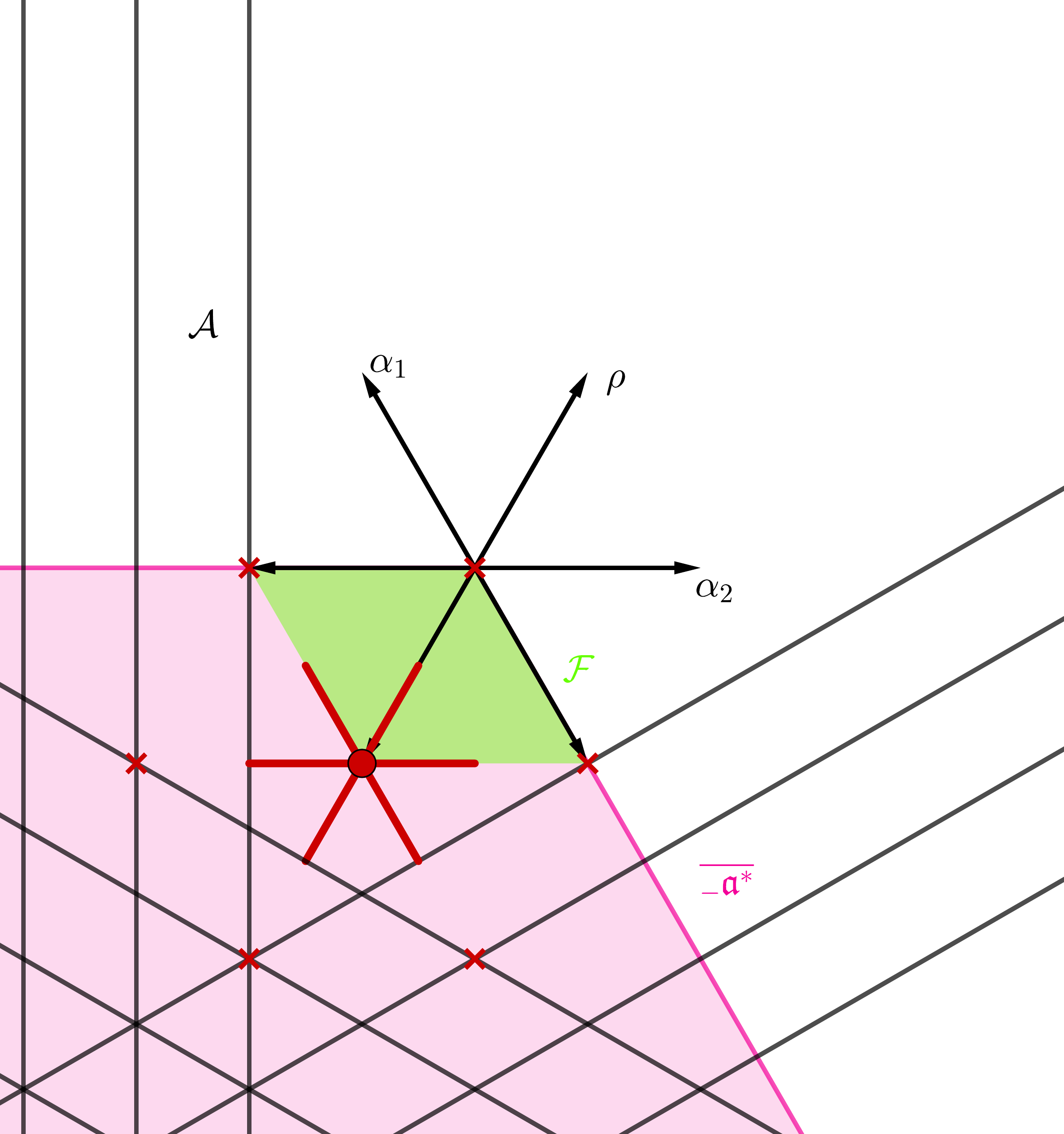}
\caption{Visualization of the real part of $\mf a_\C^\ast$ for $G=SL_3(\R)$: The pink region is the region where Ruelle-Taylor resonances can a priori be located in view of  the results of \cite{higherrank}. The red points and lines depict the region $(\mc B\cap \frac 12 \conv(W\rho)\cup W\rho)-\rho$, i.e. the region where first band resonances can occur. The green shaded region illustrates the real parts in which only first band resonances can occur. Further first band resonances might occur inside the exceptional set $\mathcal A$ depicted by the black lines.}\label{fig:first_band_sl3}
\end{figure}

\newpage\bibliographystyle{alpha}
\bibliography{literatur}

\bigskip
\bigskip

\end{document}